 \numberwithin{equation}{section}
\newtheoremstyle{plainNoItalics}{}{}{\normalfont}{}{\bfseries}{.}{ }{}
\theoremstyle{plain}
\newtheorem{thm}{Theorem}[section]
\theoremstyle{plainNoItalics}
\newtheorem{prop}[thm]{Proposition}
\newtheorem{exa}[thm]{Example}
\newcommand{\bx}{{\bf x}}
\newcommand{\bv}{{\bf v}}
\newcommand{\bw}{{\bf w}}
\newcommand{\bB}{{\bf B}}
\newcommand{\bE}{{\bf E}}
\newcommand{\bJ}{{\bf J}}
\newcommand{\bU}{{\bf U}}
\newcommand{\bV}{{\bf V}}
\newcommand{\mT}{{\mathcal T}}
\newcommand{\br}{{\bf r}}
\newcommand{\beq}{\begin{equation}}
\newcommand{\eeq}{\end{equation}}
\newcommand{\bit}{\begin{itemize}}
\newcommand{\eit}{\end{itemize}}
\newcommand{\be}{\begin{eqnarray}}
\newcommand{\ee}{\end{eqnarray}}
\newcommand{\beno}{\begin{eqnarray*}}
\newcommand{\eeno}{\end{eqnarray*}}
\newcommand{\Rmnum}[1]{\expandafter\@slowromancap\romannumeral #1@}
\begin{document}

\baselineskip=1.8pc


\begin{center}
{\bf
A Local Macroscopic Conservative (LoMaC) low rank tensor method for the Vlasov dynamics
}
\end{center}

\vspace{.2in}
\centerline{
 Wei Guo\footnote{
Department of Mathematics and Statistics, Texas Tech University, Lubbock, TX, 70409. E-mail:
weimath.guo@ttu.edu. Research is supported by NSF grant NSF-DMS-1830838 and NSF-DMS-2111383, Air Force Office of Scientific Research FA9550-18-1-0257.
} and 
Jing-Mei Qiu\footnote{Department of Mathematical Sciences, University of Delaware, Newark, DE, 19716. E-mail: jingqiu@udel.edu. Research supported by NSF grant NSF-DMS-1818924 and 2111253, Air Force Office of Scientific Research FA9550-18-1-0257.}
}

\bigskip
\noindent
{\bf Abstract.} In this paper, we propose a novel Local Macroscopic Conservative (LoMaC) low rank tensor method for simulating the Vlasov-Poisson (VP) system. The LoMaC property refers to the exact local conservation of macroscopic mass, momentum and energy at the discrete level. This is a follow-up work of our previous development of a conservative low rank tensor approach for Vlasov dynamics (arXiv:2201.10397). In that work, we applied a low rank tensor method with a conservative singular value decomposition (SVD) to the high dimensional VP system to mitigate the curse of dimensionality, while maintaining the local conservation of mass and momentum. However, energy conservation is not guaranteed, which is a critical property to avoid unphysical plasma self-heating or cooling. The new ingredient in the LoMaC low rank tensor algorithm is that we simultaneously evolve the macroscopic conservation laws of mass, momentum and energy using a flux-difference form with kinetic flux vector splitting; then the LoMaC property is realized by projecting the low rank kinetic solution onto a subspace that shares the same macroscopic observables by a conservative orthogonal projection. The algorithm is extended to the high dimensional problems by hierarchical Tuck decomposition of solution tensors and a corresponding conservative projection algorithm. Extensive numerical tests on the VP system are showcased for the algorithm's efficacy. 
\vfill

{\bf Key Words:} Low rank; hierarchical Tucker decomposition of tensors; Vlasov Dynamics; energy conservation; conservative SVD; LoMaC.
\newpage

\section{Introduction}

Numerical simulation of the Vlasov-Poisson (VP) system plays a fundamental role in understanding complex dynamics of plasma and has a wide range of applications in science and engineering, such as fusion energy. The well-known challenges for VP simulations include the high dimensionality of the phase space, resolution of multiple scales in time and in phase space, preservation of physical invariants, among many others. In this paper, we develop a novel Local Macroscopic Conservative (LoMaC) low rank tensor method with explicit time integrators that can conserve locally the mass, momentum and energy densities at the discrete level.

Over the past few decades, various types of numerical methods for the VP system have been successfully developed. The Particle-In-Cell (PIC) method employs a collection of sampled macro particles to represent the distribution function \cite{dawson1983particle,birdsall2004plasma} in the Lagrangian fashion, hence avoiding the curse of dimensionality. Meanwhile, it is well-known that the PIC method suffers the inherent statistical noise. Deterministic methods are developed under the grid-based Eulerian or semi-Lagrangian (SL) framework to compute the VP system, and are becoming popular recently, see e.g. \cite{filbet2003comparison}. Despite the high order accuracy for deterministic solvers, they are known to suffer from the bottleneck caused by the curse of dimensionality.
Several dimension reduction techniques have been developed.
One such example is the sparse grid approach \cite{smolyak1963quadrature,zenger1991sparse,griebel1990parallelizable}, which can effectively reduce the computational complexity and is well-suited for the problems with moderately high dimensions. For the Vlasov simulations, we mention the sparse grid SL method \cite{kormann2016sparse} and the sparse grid discontinuous Galerkin method \cite{guo2016sparse1,tao2018sparseguo}. Recently, the tensor approach emerged as a promising tool for feasible simulations of high-dimensional PDEs.  Such an approach aims to extract the underlying low rank structure of the solution data with advanced tensor decompositions, potentially breaking the curse of dimensionality. The popular tensor formats include the canonical polyadic (CP) format \cite{hitchcock1927expression,carroll1970analysis,harshman1970foundations,kolda2009tensor}, Tucker format \cite{tucker1966some,de2000multilinear}, hierarchical Tucker (HT) format \cite{hackbusch2009new,grasedyck2010hierarchical}, and tensor train (TT) format \cite{oseledets2009breaking,oseledets2011tensor,oseledets2012solution}. There are several pioneering works employing the low rank tensor approach for nonlinear simulations, including the low rank SL method in the TT format  \cite{kormann2015semi}, a low rank method with the CP format based on the underlying Hamiltonian formulation \cite{ehrlacher2017dynamical}, a dynamical low rank method proposed in \cite{einkemmer2018low,einkemmer2020low} for which the dynamical low rank approximation of the Vlasov solution is evolved on the low rank manifold using a tangent space projection, and dynamical tensor approximations for high dimensional linear and nonlinear PDEs based on functional tensor decomposition and dynamical tensor approximation \cite{dektor2020dynamically}. 

 In \cite{guo2021lowrank}, we proposed a low rank tensor VP solver to dynamically and adaptively build up low rank solution basis based on 
 the observation that the differential operator in the Vlasov equation can be represented in a tensorized form. In particular, we start from a low rank solution in a tensor format and add additional basis by applying the well-established high order finite difference upwind method coupled with the strong-stability-preserving (SSP) multi-step time discretizations \cite{gottlieb2011strong}; the solutions are being further updated by an SVD-type truncation to remove redundant bases.  We further generalize the algorithm to high-dimensional problems with the HT decomposition, which attains a storage complexity that is linearly scaled with the dimension, mitigating the curse of dimensionality. 
 
 On the other hand, due to the SVD truncation step, conservation properties are loss. Several techniques exist in the literature to correct conservation errors for low rank methods.  In \cite{kormann2015semi}, the low rank solution is rescaled so that  the total mass is conserved, and a similar mass correction technique is proposed in \cite{peng2020low} for a dynamical low rank method. 
In \cite{allmann2022parallel}, moment fitting is applied to the low rank solution so that the corrected moments match those solved from the macroscopic fluid equations.
In \cite{einkemmer2019quasi}, a dynamical low rank method with Lagrangian multipliers is developed to improve conservation properties for the total mass and momentum as well as local projected moment equations. More recently, along the same line, the truly local conservation of mass, momentum, and energy is attained for the dynamical low rank method \cite{einkemmer2021mass}. The idea is to fix certain basis functions in the dynamical low rank approximation and employ a modified Petrov–Galerkin formulation which is compatible with the remainder of the approximation.
In our recent work \cite{guo2022conservative}, a conservative SVD truncation is developed via an orthogonal projection to a subspace with conservation of macroscopic moments followed by a weighted SVD truncation performed on the remainder term. As a result, local mass and momentum conservation is achieved at the discrete level. However, the algorithm does not enjoy global or local energy conservation, as the associated full rank scheme can not conserve energy. In fact, an implicit symplectic time discretization is usually needed for exact energy conservation of a fully discrete scheme \cite{cheng2014energy}.   

In this paper, we develop a novel LoMaC low rank tensor method for the high dimensional Vlasov simulations. The key new ingredient is the simultaneous update of macroscopic conservation laws alongside the VP system and using them to define a reference subspace that shares the same macroscopic observables.  Figure~\ref{f1} highlights the flow chart of the algorithm. 
\begin{figure}[h!]
	\centering
	        {\includegraphics[height=20mm]{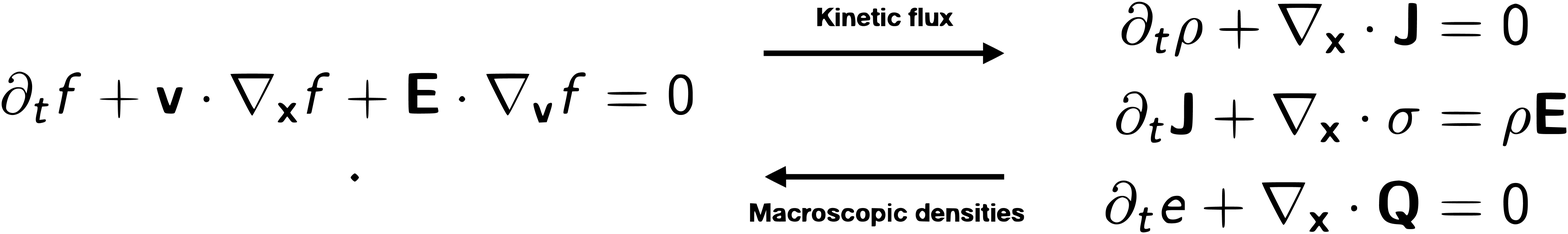}}
	        \caption{Illustration of LoMaC scheme.}
\label{f1}
\end{figure}
To be precise, the kinetic solution is used to construct numerical fluxes to update the macroscopic densities via the kinetic flux vector splitting (KFVS) for local conservation  \cite{mandal1994kinetic, xu1995gas}.
The low rank kinetic solution is orthogonally projected onto the reference subspace defined by macroscopic densities from conservation laws; 
 then a weighted SVD truncation is applied to the projection remainder to remove redundancy for data sparsity. We further develop the LoMaC algorithm for the 2D2V VP system with the HT tensor format using a dimension tree that separates the spatial and phase variables. For high order HT tensors, an additional projection step is needed after the hierarchical  high order SVD (HOSVD) truncation of the remainder term to ensure exact local conservation of macroscopic moments. In the proposed scheme, kinetic and fluid models complement each other. Kinetic model offers higher moments but lack certain conservation properties; while fluid models use kinetic solutions for fluxes and enjoys local conservation of the lower moments.  In the implementation, macroscopic fluid solvers and kinetic solvers are implemented alongside with each other in a self-consistent fashion, with little additional computational cost. We remark that, to the best of our knowledge, this is the first explicit low rank VP solver that achieves local energy conservation at the discrete level. The LoMaC low rank tensor algorithm is theoretically proved and numerically verified to be locally mass, momentum and energy conservative. 

This paper is organized as follows. In Section 2, we introduce the kinetic Vlasov model and the corresponding macroscopic conservation laws. In Section 3, we first review the low rank tensor approach for the 1D1V Vlasov equation in Section 3.1, then we review the conservative SVD truncation in Section 3.2, followed by the LoMaC algorithm with simultaneous update of macroscopic conservation laws using KFVS in Section 3.3. In Section 4, we develop the LoMaC algorithm for the 2D2V Vlasov model. In Section 5, we present an extensive set of 1D1V and 2D2V numerical results to demonstrate the effectiveness and the conservation properties of the proposed low rank tensor algorithm.  We conclude the main contributions of the paper and comment on future research directions in Section 6.

\section{The kinetic Vlasov model and the corresponding macroscopic systems}
We consider the dimensionless VP system 
\beq
\frac{\partial f}{\partial t}  
+  {\bf{v}} \cdot \nabla_{\bf{x}}  f 
+ {\bf{E}} ({\bf{x}},t) \cdot \nabla_{\bf{v}}  f = 0,
\label{vlasov1}
\eeq
\beq
 {\bf E}( {\bf x},t) = - \nabla_{\bf x} \phi({\bf x},t),  \quad -\triangle_{\bf x} \phi ({\bf x},t) = {{\bf \rho} ({\bf x},t)} - \rho_0,
\label{poisson}
\eeq
which describes the dynamics of  the probability distribution function $f({\bf x}, {\bf v},t)$ of electrons in a collisionless plasma. 
Here ${\bf E}$ is the electric field and $\phi$ is the self-consistent electrostatic potential  determined by  Poisson's equation. $f$ couples to the long range fields via the density ${\bf \rho}({\bf x},t) = \int_{\Omega_{\bv}} f({\bf x}, {\bf v},t) d {\bf v}$, where we take the limit of uniformly distributed infinitely massive ions in the background.  

The Vlasov dynamics are well-known to conserve several physical invariants. In particular, let 
\be
\label{eq: mass_d}
\mbox{mass density:}&& \rho (\bx, t) = \int_{\Omega_{\bv}} f(\bx, \bv,t) d \bv, \\
\label{eq: current_d}
\mbox{current density:} &&\bJ (\bx, t) = \int_{\Omega_{\bv}}f(\bx, \bv,t) \bv d \bv,\\
\label{eq: kenergy_d}
\mbox{kinetic energy density:} && \kappa(\bx,t) = \frac{1}{2} \int_{\Omega_{\bv}} |\bv|^{2}  f(\bx, \bv,t) d \bv,\\
\label{eq: energy_d}
\mbox{energy density:} && e(\bx,t)=\kappa(\bx,t)+\frac{1}{2} \bE(\bx)^2.
\ee
Then, by taking the first few moments of the Vlasov equation,
the following conservation laws of mass, momentum and energy can be derived
\begin{align}
\partial_{t} \rho + \nabla_\bx \cdot \bJ &= 0\label{eq:mass}\\
\partial_{t} \bJ +\nabla_{\bx} \cdot \mathbf{\sigma}&= \rho\bE \label{eq:mom}\\
\partial_{t} e +\nabla_{\bx} \cdot \mathbf{Q}& =0,\label{eq:ener} 
\end{align}
where $ \sigma(t, \bx)=\int_{\Omega_{\bv}}(\bv \otimes \bv) f(\bx, \bv,t) d \bv$ and $\mathbf{Q}(\bx,t) =\frac12\int_{\Omega_{\bv}}\bv|\bv|^2 f(\bx, \bv,t) d \bv$. 
It is well-known that local conservation property is essential to capture correct entropy solutions of hyperbolic systems such as \eqref{eq:mass}-\eqref{eq:ener}. 

\section{A low rank tensor approach for the Vlasov dynamics with local conservation}

For simplicity of illustrating the basic idea, we only discuss a 1D1V example in this section. 

\subsection{Review of a low rank tensor approach for Vlasov dynamics \cite{guo2021lowrank}}
The low rank tensor approach \cite{guo2021lowrank} is designed based on the assumption that our solution at time $t$ has a low rank representation in the form of 
\begin{equation}
\label{eq: fn1}
f(x, v, t) = \sum_{l=1}^{r} \left(C_l(t) \  U_l^{(1)}(x, t) U_l^{(2)}(v, t)\right),
\end{equation}
where $\left\{U_l^{(1)}(x, t)\right\}_{l=1}^{r}$ and $\left\{U_l^{(2)}(v, t)\right\}_{l=1}^{r}$ are a set of time-dependent low rank orthonormal  basis in $x$ and $v$ directions, respectively, $C_l$ is the coefficient for the basis $U_l^{(1)}(x, t)U_l^{(2)}(v, t)$, and $r$ is the representation rank.  \eqref{eq: fn1} can be viewed as a Schmidt decomposition of functions in $(x, v)$ by truncating small singular values up to rank $r$. 

We assume a finite difference discretization of $f$ on a truncated 1D1V domain of $[x_{\min}, x_{\max}] \times [-v_{\max}, v_{\max}]$ with uniform tensor product $N_x \times N_v$ grid points 
\beq
\label{eq: x_grid}
x_{\text{grid}}: \quad x_{\min}=x_1< \cdots < x_i < \cdots < x_{N_x} = x_{\max}, 
\eeq
\beq
\label{eq: v_grid}
v_{\text{grid}}: \quad -v_{\max}=v_1< \cdots < v_j <\cdots < v_{N_v} = v_{\max},
\eeq
and denote $h_x$ and $h_v$ as the mesh sizes in $x$- and $v$-directions, respectively.
The numerical solution ${\bf f} \in \mathbb{R}^{N_x \times N_v}$, as an approximation to point values of the solution on the grids \eqref{eq: x_grid}-\eqref{eq: v_grid}, has the corresponding low rank counterpart to \eqref{eq: fn1} as
\begin{equation}
\label{eq: fn2}
{\bf f} = \sum_{l=1}^{r} \left(C_l \  {\bf U}_l^{(1)}  \otimes {\bf U}_l^{(2)}\right), \quad 
(\mbox{or element-wise:} \quad
{f}_{ij} = \sum_{l=1}^{r}  C_l \ {U}_{l, i}^{(1)} {U}_{l, j}^{(2)}), 
\end{equation}
where ${\bf U}_l^{(1)} \in \mathbb{R}^{N_x}$ and ${\bf U}_l^{(2)} \in \mathbb{R}^{N_v}$ can be viewed as approximations to corresponding basis functions in \eqref{eq: fn1}. \eqref{eq: fn2} can also be viewed as an SVD of the matrix ${\bf f} \in \mathbb{R}^{N_x \times N_v}$. The associated storage cost is $\mathcal{O}(r N)$, where we assume $N=N_x = N_v$. 

Our low rank  tensor approach adaptively updates low-rank basis and associated coefficients by two steps: an adding basis step by conservative hyperbolic solvers and a removing basis step via an SVD-type truncation. We apply a second order SSP multi-step temporal discretization of 1D1V Vlasov equation \eqref{vlasov1} to illustrate the main idea.  We assume the solution in the form of \eqref{eq: fn2} with superscript $n$ for the solution at $t^n$.
\begin{enumerate}
\item {\em Add basis and obtain an intermediate solution ${\bf f}^{n+1, *}$.} 
A second order multi-step discretization of time derivative in \eqref{vlasov1} gives
\begin{equation}
\label{eq: fn3}
{f}^{n+1, *} = \frac14{f}^{n-2}+\frac34 {f}^{n}- \frac32\Delta t (v \partial_x ({f}^n) + E^n \partial_v ({f}^n)).
\end{equation}
Here the electric field $E^n$ is solved by a Poisson solver. Thanks to the tensor friendly form of the Vlasov equation, assuming the low rank format of solutions at $t^{n-2}$ and $t^n$, ${\bf f}^{n+1, *}$ can be represented in the following low rank format:
\begin{align}
\label{eq:lowrankmethod}
{\bf f}^{n+1, *} =& \frac14 \sum_{l=1}^{r^{n-2}} C_l^{n-2} \left( {\bf U}_l^{(1), n-2} \otimes {\bf U}_l^{(2), n-2}\right) 
+\frac34 \sum_{l=1}^{r^n} C_l^n \left( {\bf U}_l^{(1), n} \otimes {\bf U}_l^{(2), n}\right)  \\
& -\frac32  \Delta t 
\left( D_x {\bf U}_l^{(1), n} \otimes \bv \star {\bf U}_l^{(2), n} + \bE^n \star {\bf U}_l^{(1), n} \otimes D_v {\bf U}_l^{(2), n}
\right),
\end{align}
Here, with a slight abuse of notation, $\bv \in\mathbb{R}^{N_v}$ denotes the coordinates of $v_{grid}$ introduced in \eqref{eq: v_grid}. $D_x$ and $D_v$ represent high order {\em locally conservative upwind discretization} of spatial differentiation terms, and $\star$ denotes an element-wise multiplication operation. For example the discretization of $D_x {\bf U}_l^{(1), n} \otimes \bv \star {\bf U}_l^{(2), n}$ follows 
\begin{equation}
D^+_x {\bf U}_l^{(1), n} \otimes \bv^+ \star {\bf U}_l^{(2), n} + D^-_x {\bf U}_l^{(1), n} \otimes \bv^- \star {\bf U}_l^{(2), n}, 
\end{equation}
where $D^+_x$ and $D^-_x$ are a fifth order upwind finite difference discretization of positive and negative velocities respectively, with $\bv^+ = \max(\bv, 0)$ and $\bv^-=\min(\bv, 0)$. 
Similarly, the discretization of $\bE^n \star {\bf U}_l^{(1), n} \otimes D_v {\bf U}_l^{(2), n}$ follows 
 \beq
 \bE^{n, +} \star {\bf U}_l^{(1), n} \otimes D^+_v {\bf U}_l^{(2), n}+\bE^{n, -} \star {\bf U}_l^{(1), n} \otimes D^-_v {\bf U}_l^{(2), n}
 \eeq
where $D^+_v$ and $D^-_v$ are a fifth order upwind finite difference discretization of positive and negative velocities respectively, with $\bE^+ = \max(\bE, 0)$ and $\bE^-=\min(\bE, 0)$. 
\item {\em Remove basis of ${\bf f}^{n+1, *}$ to update solution ${\bf f}^{n+1}$.} Since the number of bases has increased in a single step update, we perform an SVD-type truncation to remove redundant bases with a prescribed threshold $\varepsilon$. The truncation step has no guarantee of any mass, momentum or energy conservation property. 
The removing basis step costs $\mathcal{O}(r^2 N + r^3)$, where $r$ is the SVD rank of the numerical solution.  
\end{enumerate}
In this two-step process, both the basis and coefficients are updated. Extensions to schemes with high order spatial and temporal discretizations and to high dimensional problems, are developed in \cite{guo2021lowrank}. The low rank approach  \cite{guo2021lowrank} is built upon the classical high order methods for conservation laws and kinetic equations, yet it optimizes the computational efficiency by dynamically building low rank global basis and updating the corresponding coefficients via an SVD truncation procedure. While the SVD truncation significantly reduces the computational storage and cost complexity, it also destroys the desired conservation property.

\subsection{A review of conservative SVD truncation for preserving mass, momentum and kinetic energy density \cite{guo2022lowrank}.} 
We proposed a conservative SVD truncation in \cite{guo2022lowrank}
for preservation of mass, momentum and kinetic energy density. The original idea in \cite{guo2022lowrank}, inspired by those in \cite{einkemmer2021mass}, is to first project the updated solution, ${\bf f}^{n+1, *}$ from \eqref{eq:lowrankmethod}, to a subspace 
\beq
\mathcal{N}\doteq \text{span}\{{\bf 1}_v, \bv, \bv^2\},
\eeq
where  ${\bf 1}_v\in \mathbb{R}^{N_v}$ is the vector of all ones, and $\bv^2$ $\in \mathbb{R}^{N_v}$ is the elment-wise square of $\bv$. To ensure proper decay of the projected function as $v \to \infty$, 
we introduced a weight function $w(v)$ with exponential decay.  One such example is $w(v) = \exp(-v^2/2)$, which is used throughout the paper unless otherwise specified. With the introduction of the weight function, a scaling and re-scaling procedure is needed for the projection step, as well as for the SVD-truncation step. 

To review the conservative truncation procedure \cite{guo2022lowrank}, we introduce the following definitions, 
\bit
\item Standard $l^2$ inner product and the associated norm: 
\beq
\label{eq: inner_prod_2_d}
\langle {\bf f},  {\bf g} \rangle = h_v \sum_j f_j g_j,  \quad \quad \|{\bf f}\|_2 =\sqrt{\langle {\bf f},  {\bf f} \rangle}
\eeq
where $h_v$ is the mesh size in $v$-direction, serving as the quadrature weights for the uniform $v_{\text{grid}}$ \eqref{eq: v_grid}. 
Correspondingly, we let
$
l^2 = \{{\bf f}\in\mathbb{R}^{N_v}: \|{\bf f}\|_2 < \infty\}.
$
\item Weighted inner product and the associated norm as
\beq
\label{eq: inner_prod_d}
\langle {\bf f},  {\bf g} \rangle_{\bf w} = \sum_j f_j g_j w_j, \quad 
\quad \|{\bf f}\|_{\bf w} =\sqrt{\langle {\bf f},  {\bf f} \rangle_{\bf w}}, 
\eeq
where ${\bf w}\in \mathbb{R}^{N_v}$ with $w_j = w(v_j) h_v$ is the quadrature weights for $v$-integration with weight function $w(v)$. Correspondingly, we let
$
l^2_{\bf w} = \{{\bf f}\in\mathbb{R}^{N_v}: \|{\bf f}\|_{\bf w} < \infty\}.
$
\eit 
Consider the subspace $\mathcal{N}\subset l^2_w$, a conservative low rank truncation of a numerical solution ${\bf f}\in \mathbb{R}^{N_x\times N_v} $ written in the low rank form of \eqref{eq: fn2} can be obtained from steps below. 
\begin{enumerate}
\item {\bf Compute macroscopic quantities of ${\bf f}$.} We compute the discrete macroscopic charge, current and kinetic energy density ${\boldsymbol \rho}$, ${\bf J}$ and ${\boldsymbol \kappa} \in \mathbb{R}^{N_x}$ by quadrature 
   \begin{align}
\left(\begin{array}{l}
{\boldsymbol \rho}\\
{\bf J}\\
{\boldsymbol \kappa} 
\end{array}
\right )
 = \sum_{l=1}^{r} C_l
 \left 
 \langle \bU^{(2)}_{l}, 
 \left(\begin{array}{l}
 {\bf 1}_v \\
\bv\\
\frac12\bv^2
\end{array}
\right )
\right \rangle
\ \bU^{(1)}_l. 
\label{eq:rho_j_kappa}
\end{align}   
\item {\bf Scale.} We scale ${\bf f}$ as 
\beq
\label{eq: rescale}
\tilde{\bf f} = \frac{1}{{\bf w}} \star {\bf f} =  \sum_{l=1}^{r} \left(C_l \ \ {\bf U}_l^{(1)}  \otimes \left(\frac{1}{{\bf w}} \star  {\bf U}_l^{(2)}\right)\right), 
\eeq
where $\star$ is the element-wise product in the $v$-dimension. 
\item {\bf Project.} We perform an orthogonal projection of $\tilde{\bf f}$ with respect to the inner product \eqref{eq: inner_prod_d} onto subspace $\mathcal{N}$, i.e.
\beq
\label{eq: proj}
\langle P_{\mathcal{N}}(\tilde{\bf f}), {\bf g} \rangle_\bw
= \langle \tilde{\bf f}, {\bf g} \rangle_\bw, 
\quad \forall {\bf g}\in \mathcal{N}. 
\eeq
It can be shown that ${\bf w}\star P_{\mathcal{N}}(\tilde{\bf f})$ preserves the mass, momentum and kinetic energy densities of ${\bf f}$ in the discrete sense. 
With the orthogonal project, a conservative decomposition of ${\bf f}$ \cite{guo2022lowrank} can be performed as 
\beq
\label{eq: f_decom_d}
{\bf f} = {\bf w} \star (P_{\mathcal{N}}(\tilde{{\bf f}}) + (I-P_{\mathcal{N}})(\tilde{{\bf f}})) 
\doteq {\bf w} \star (\tilde{{\bf f}}_1 + \tilde{{\bf f}}_2)
\doteq {\bf f}_1 + {\bf f}_2, 
\eeq
where $f_1$ can be represented as  a rank three tensor
 \begin{align}\label{eq:f1}
 {\bf f}_1 ({\boldsymbol \rho}, {\bf J}, {\boldsymbol \kappa})= & \frac{\boldsymbol \rho}{\|{\bf 1}_v\|_\bw^2} \otimes ({\bf w} \star {\bf 1}_v) 
 +  \frac{\bf J}{\|{\bf v}\|_{\bf w}^2} \otimes ({\bf w} \star {\bf v})  +  \frac{2 {\boldsymbol \kappa}-c{\boldsymbol \rho}}{\|{\bf v}^2- c {\bf 1}_v\|_{\bf w}^2} \otimes ({\bf w} \star ({\bf v}^2- c {\bf 1}_v) ), 
 \end{align} 
 where $c=\frac{\langle \bm{1}_v, {\bf v}^2\rangle_{\bf w}}{\|{\bf 1}_v\|_{\bf w}^2}$ is computed so that $\{ {\bf 1}_v, {\bf v}, {\bf v}^2- c {\bf 1}_v\}$ forms an orthogonal set of basis and 
   ${\boldsymbol \rho}$, ${\bf J}$ and ${\boldsymbol \kappa}$ are the discrete mass, momentum and kinetic energy density of ${\bf f}$ from \eqref{eq:rho_j_kappa}. ${\bf f}_1$ preserves the discrete mass, momentum and kinetic energy density of ${\bf f}$, while the remainder part ${\bf f}_2 = {\bf f} -{\bf f}_1$ has zero of them.
\item {\bf Truncate in $l^2_{\bf w}$.} We then perform an SVD truncation of the remainder part $\tilde{\bf f}_2$ from \eqref{eq: f_decom_d} with respect to the weighted inner product \eqref{eq: inner_prod_d}. With the scaling and rescaling by the weight function, the weighted SVD truncation writes
$
\mathcal{T}^{\bf w}_{\varepsilon} (\tilde{\bf f}_2) =\frac{1}{\sqrt{\bf w}}  \star \mathcal{T}_\varepsilon (\sqrt{\bf w} \star \tilde{\bf f}_2).
$
That is, ${\bf f}_2$ is truncated to
\beq
\label{eq: cons_trun_f2}
{\bf w}\star\mathcal{T}^{\bf w}_{\varepsilon} (\tilde{\bf f}_2) = \sqrt{\bf w}  \star \mathcal{T}_\varepsilon (\sqrt{\bf w} \star \tilde{\bf f}_2)= \sqrt{\bf w}  \star \mathcal{T}_\varepsilon (\frac{1}{\sqrt{\bf w}} \star {\bf f}_2).
\eeq
\item {\bf Update.} We obtain the low rank truncation of ${\bf f}$ with local mass, momentum and energy conservation, denoted as  
\beq
T_c({\bf f}) = {\bf f}_1 + {\bf w}\star\mathcal{T}^{\bf w}_{\varepsilon} (\tilde{\bf f}_2) ={\bf f}_1 +\sqrt{\bf w}  \star \mathcal{T}_\varepsilon (\frac{1}{\sqrt{\bf w}} \star {\bf f}_2). 
\label{eq: Tc}
\eeq
We call the proposed truncation \eqref{eq: Tc} the conservative truncation, as $T_c({\bf f})$ exactly preserves the mass, momentum and kinetic energy density of ${\bf f}$. 
\end{enumerate}

In \cite{guo2022lowrank}, we established the local conservation of mass and momentum in the low rank tensor approach with the conservative truncation \eqref{eq: Tc}. Since the associated full rank algorithm (without truncation) does not have energy conservation property, the low rank tensor scheme cannot preserve energy conservation. In fact, an implicit symplectic type time discretization is often needed for the kinetic scheme for energy conservation, e.g. see \cite{cheng2014energy}.  In Figure \ref{fig:bump1d_old},   we present the time evolution of relative deviation of the total mass, total momentum and total energy of the method in \cite{guo2022lowrank} for simulating the bump-on-tail instability test with truncation threshold $\varepsilon=10^{-4}$ (see Example \ref{ex:bumpontail} in Section \ref{sec:numerical}). It is observed that the total mass is well conserved up to the machine precision. Meanwhile, with a coarse mesh size $32\times64$, it is found that the conservation error of the total momentum starts to increase at $t=15$, which is attributed to the boundary error as discussed in \cite{guo2022lowrank}. The total energy conservation is not observed as expected.
\begin{figure}[h!]
	\centering
			\subfigure[]{\includegraphics[height=40mm]{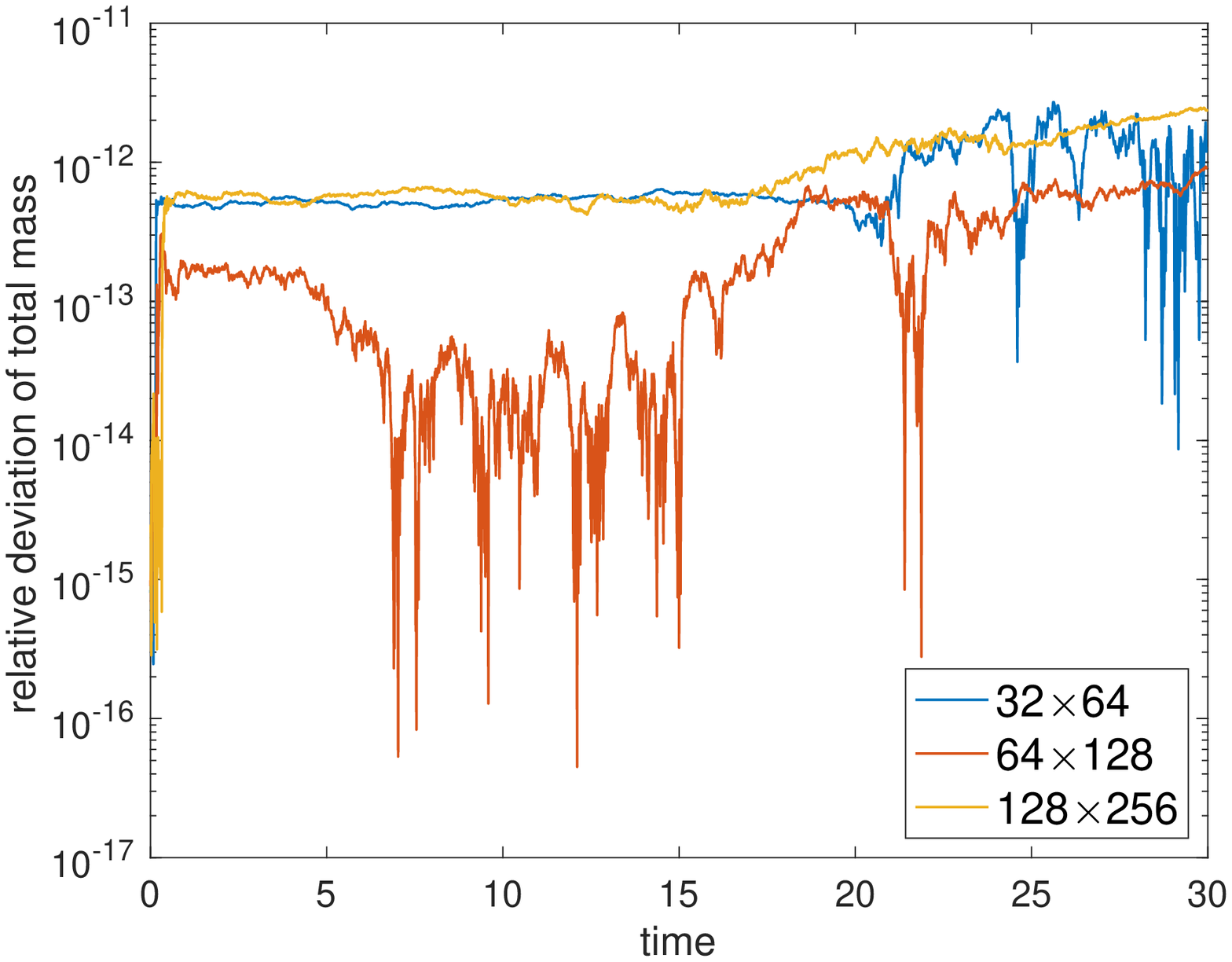}}
	       \subfigure[]{\includegraphics[height=40mm]{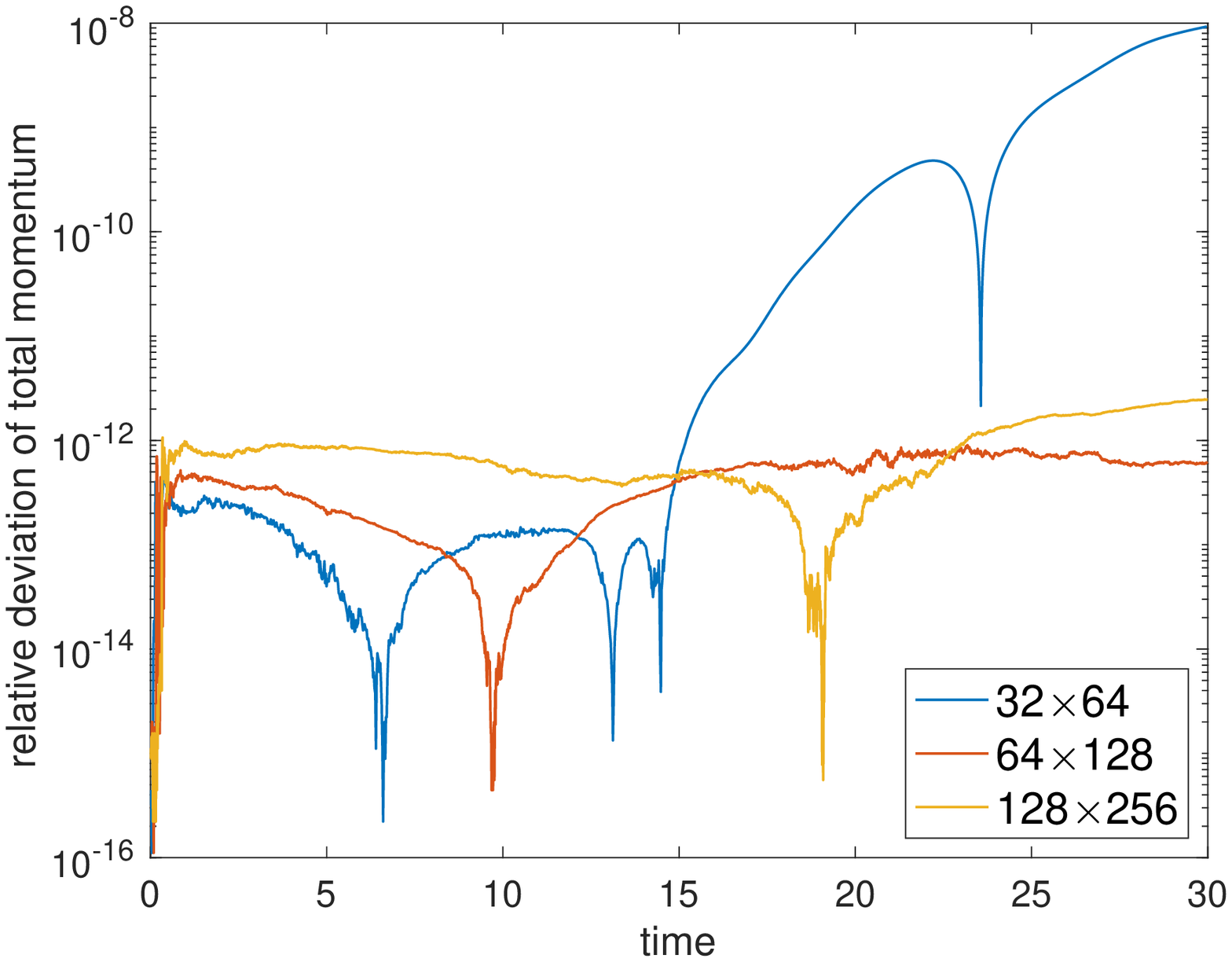}}
		\subfigure[]{\includegraphics[height=40mm]{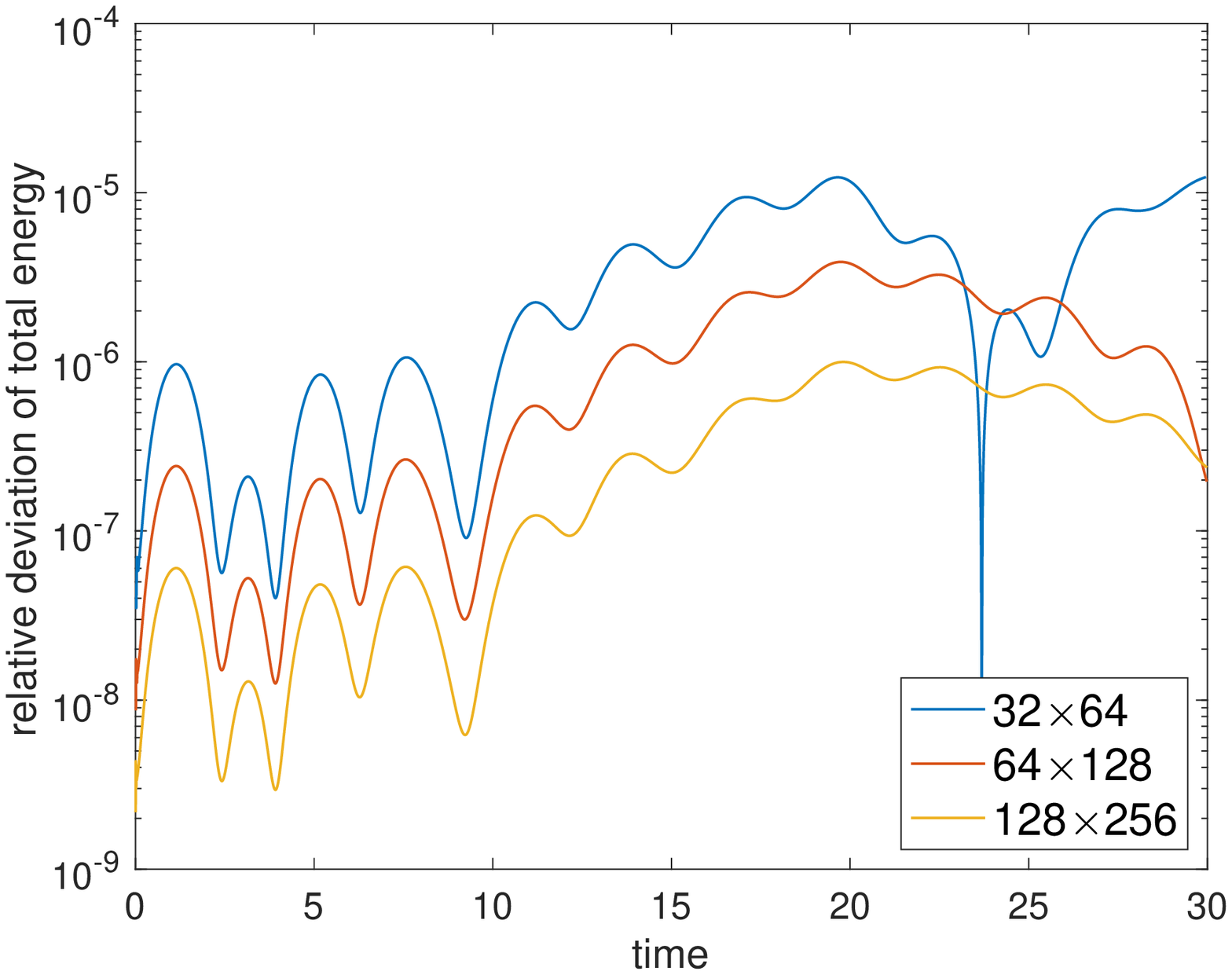}}
	\caption{Bump-on-tail instability.  Relative deviation of the total mass (a),  total momentum (b), and total energy (c). Conservative low rank method \cite{guo2022lowrank}. $\varepsilon=10^{-4}$.}
	\label{fig:bump1d_old}
\end{figure}

Finally, the proposed conservative low rank tensor algorithm, based on a finite difference scheme with fifth order spatial discretization and the second order SSP multi-step temporal discretization,  is summarized in the Algorithm \ref{alg: low_rank}.

\bigskip
\begin{algorithm}[H]
\label{alg: low_rank}
  \caption{The conservative low rank tensor algorithm for the 1D1V VP system.}
    \begin{enumerate}
\item Initialization:
  \begin{enumerate}
  \item 
   Initial distribution function $f(x, v, t=0)$ in a low rank format \eqref{eq: fn2}. 
 \end{enumerate}
\item For each time step evolution from $t^n$ to $t^{n+1}$: update ${\bf f}^{n+1}$ from ${\bf f}^n$ in the low rank format.
  \begin{enumerate}
  \item Compute the charge density $\boldsymbol \rho^{n}$ in the low rank format, followed by computing $\bE^n$ from Poisson equation's \eqref{poisson} by fast Fourier transform or a high order finite difference algorithm.
  \item Add basis by a finite difference upwind spatial discretization coupled with a second order SSP multi-step method
  \be
{\bf f}^{n+1, *} &&=  \frac14 \sum_{l=1}^{r^{n-2}} \left(C_l^{n-2} \ \ {\bf U}_l^{(1), n-2} \otimes {\bf U}_l^{(2), n-2}\right)+ \frac34 \sum_{l=1}^{r^n} \left(C_l^n \ \ {\bf U}_l^{(1), n} \otimes {\bf U}_l^{(2), n}\right) \nonumber
\\\nonumber
&&- \frac32\Delta t \sum_{l=1}^{r^n} C_l^n \left[ 
\left( D^+_x {\bf U}_l^{(1), n} \otimes \bv^+ \star {\bf U}_l^{(2), n} + D^-_x {\bf U}_l^{(1), n} \otimes \bv^- \star {\bf U}_l^{(2), n} \right.\right.\\ \nonumber
&&\left.\left.+ \bE^{n, +} \star {\bf U}_l^{(1), n} \otimes D^+_v {\bf U}_l^{(2), n}+\bE^{n, -} \star {\bf U}_l^{(1), n} \otimes D^-_v {\bf U}_l^{(2), n}
\right)\right]. 
\label{eq: fn+1star}
\ee
\item Remove basis by a conservative SVD truncation from ${\bf f}^{n+1,*}$ in the low rank format.
  \begin{enumerate}
  \item Compute ${\boldsymbol \rho}$,  ${\bf J}$,  ${\boldsymbol \kappa}$ of ${\bf f}^{n+1,*}$ from \eqref{eq:rho_j_kappa}. 
  \item Compute ${\bf f}_1$ from \eqref{eq:f1}.
\item Perform the truncation of ${\bf f}_2$ by \eqref{eq: cons_trun_f2}. 
  \item Update the compressed low-rank solution via \eqref{eq: Tc}, 
  $${\bf f}^{n+1} = T_c({\bf f}) = {\bf f}_1 +\sqrt{\bf w}  \star \mathcal{T}_\varepsilon (\frac{1}{\sqrt{\bf w}} \star {\bf f}_2).$$
\end{enumerate}
\end{enumerate}
\end{enumerate}
  \end{algorithm}

  \subsection{Local macroscopic conservation (LoMaC) achieved by kinetic flux vector splitting schemes for macroscopic equations}
  \label{sec: LoMac1D1V}

\eqref{eq:f1} implies the following observation for the orthogonal projection and decomposition of ${\bf f}$  in the low rank format:  ${\bf f}_1$ can be uniquely determined by macroscopic $\boldsymbol\rho$, $\bJ$ and $\boldsymbol\kappa$. On the other hand, it has been known that numerical methods for system of conservation laws, if being written in the flux-difference form, can locally preserve the macroscopic quantities. 

We propose to update the macroscopic mass, momentum and kinetic energy density by classical numerical methods in a flux-difference form via a high order discretization of macroscopic system \eqref{eq:mass}-\eqref{eq:ener}. Since the kinetic solution ${\bf f}$ is known, flux functions for \eqref{eq:mass}-\eqref{eq:ener} can be found by taking the upwind components and perform integration in velocity directions as in kinetic flux vector splitting \cite{mandal1994kinetic, xu1995gas}. Once these macroscopic quantities are updated, they are plugged into \eqref{eq:f1} to construct a new ${\bf f}_1^{M}$ (replace the ${\bf f}_1$ from the projection of the kinetic solution). The remainder part ${\bf f}_2 ={\bf f}-{\bf f}_1$ stays the same with zero macroscopic $\boldsymbol\rho$, $\bJ$ and $\boldsymbol\kappa$. In other words, we perform a correction step on the first few moments of ${\bf f}$, from using a conservative kinetic flux vector splitting scheme for macroscopic equations, to ensure local macroscopic conservation. 

Below we describe the conservative update of macroscopic variables, denoted as $\boldsymbol\rho^{n+1, M}$, $\bJ^{n+1, M}$, $\boldsymbol\kappa^{n+1, M}$, by a conservative scheme in the flux-difference form with the kinetic flux vector splitting. Let $U \doteq (\rho, {\bf J}, {e})^\top$,  $F \doteq ({\bf J}, \sigma,  {\bf Q})^\top$ and  $S = (0, \rho E,0)^\top$, then the macroscopic system \eqref{eq:mass}-\eqref{eq:ener} becomes
\beq
\label{eq:U}
U_t + F_x = S.
\eeq
Assuming the same spatial grid \eqref{eq: x_grid},  the algorithm with the high order upwind finite difference spatial discretization coupled with the second order SSP multi-step time integrator for system \eqref{eq:U} becomes
\begin{equation}
\label{eq:Uupdate}
U_j^{n+1} = \frac14U^{n-2}_j + \frac34U^{n}_j + \frac32\Delta t\left(-\frac{1}{h_x} \left( \hat{F}^n_{j+\frac12} -\hat{F}^n_{j-\frac12}\right) + S_j^n\right),
\end{equation}
where $U^{n}_j = (\rho_j^n, \bJ^{n}_j , e^{n}_j)^\top$ and $S_j^n = (0, \rho_j^nE_j^n,0)^\top$, $j=1,\ldots,N_x$. 
The numerical fluxes are uniquely defined at cell interfaces and is given by the following upwind splitting 
\begin{equation}
\hat{F}^n_{j+\frac12} = \hat{F}^{n, +}_{j+\frac12} + \hat{F}^{n, -}_{j+\frac12}, \quad j=1,\ldots,N_x.
\end{equation}
To obtain $\hat{F}^{n, \pm}_{j+\frac12}$ with high order spatial accuracy in an upwind fashion, assuming the kinetic solution ${\bf f}^n$ in a low rank format \eqref{eq: fn2},  we first compute ${\bf F}^{n, +}$ and ${\bf F}^{n, -}\in \mathbb{R}^{N_x}$
 \begin{align}
{\bf F}^{n, +}
 = \sum_{l=1}^{r^n} C^{n}_l
 \left 
 \langle \bU^{(2), n}_{l}, 
 \left(\begin{array}{c}
\bv^+\\
(\bv^+)^2\\
\frac12 (\bv^+)^3
\end{array}
\right )
\right \rangle
\ \bU^{(1), n}_l, \quad
{\bf F}^{n, -}
 = \sum_{l=1}^{r^n} C^n_l
 \left 
 \langle \bU^{(2), n}_{l}, 
 \left(\begin{array}{c}
\bv^-\\
(\bv^-)^2\\
\frac12 (\bv^-)^3
\end{array}
\right )
\right \rangle
\ \bU^{(1), n}_l,
\label{eq:Fpm}
\end{align}
where $\bv^+ = \max(\bv, 0)$, $\bv^- = \min(\bv, 0)$ and the inner product $\langle \cdot, \cdot \rangle$ is in the sense of \eqref{eq: inner_prod_2_d}. 
Let $F^{n, \pm}_j ={\bf F}^{n, \pm}(j)$, the upwind fluxes $\hat{F}^{n, \pm}_{j+\frac12}$ are reconstructed from ${\bf F}^{n, \pm}(:)$ in the following way using the corresponding high order upwind stencils \cite{shu2009high}, 
\begin{align*}
\hat{F}^{n, -}_{j+\frac12} &= -\frac{1}{20}F_{j-1}^{n, -} + \frac{9}{20}F_{j}^{n, -} + \frac{47}{60}F_{j+1}^{n, -} - \frac{13}{60}F_{j+2}^{n, -}+ \frac{1}{30}F_{j+3}^{n, -},\\
 \hat{F}^{n, +}_{j+\frac12} &= \frac{1}{30}F_{j-2}^{n, +} -\frac{13}{60}F_{j-1}^{n, +} + \frac{47}{60}F_{j}^{n, +} +  \frac{9}{20}F_{j+1}^{n, +} -\frac{1}{20}F_{j+2}^{n, +}.
\end{align*}
Then we let 
$
 \left(\begin{array}{l}
{\rho}_j^{n+1, M} \\
{\bJ}_j^{n+1, M}\\
 {e}_j^{n+1, M}
\end{array}
\right )$ be $U^{n+1}_j$ updated from \eqref{eq:Uupdate}, from which we can compute 
\begin{equation}
\label{eq:kinetic_update}
{\kappa}_j^{n+1, M} = {e}_j^{n+1, M} - \frac12|E^{n+1}_j|^2
\end{equation}
from \eqref{eq: energy_d} where $\bE^{n+1}$ can be computed directly from $\boldsymbol \rho^{n+1,M}$ via Poisson's equation.  Finally, we construct ${\bf f}^M_1$ according to \eqref{eq:f1}, which replaces ${\bf f}_1$ in \eqref{eq:f1}. Such a replacement can be viewed as a correction step for macroscopic conservation. 
Meanwhile, the treatment for ${\bf f}_2$ in the orthogonal decomposition \eqref{eq: f_decom_d} stays the same. That is ${\bf f}_2$ is truncated from \eqref{eq: cons_trun_f2}, making sure  it still contains zero mass, momentum and kinetic energy densities after truncation.

We summarize the newly proposed LoMaC low rank tensor algorithm, based on a finite difference scheme with fifth order spatial discretization and second order SSP multi-step temporal discretization, in Algorithm \ref{alg: low_rank_lomac}.

\bigskip
\begin{algorithm}[H]
\label{alg: low_rank_lomac}
  \caption{The LoMaC low rank tensor algorithm for the 1D1V VP system.}
    \begin{enumerate}
\item Initialization:
  \begin{enumerate}
  \item Algorithm 1 Step 1.
 \end{enumerate}
\item For each time step evolution from $t^n$ to $t^{n+1}$: update ${\bf f}^{n+1}$ from ${\bf f}^n$ in the low rank format.
  \begin{enumerate}
  \item Algorithm 1 Step 2 (a).
  \item Update ${\bf f}^{n+1,*}$ by Algorithm 1 Step 2 (b).
  Compute ${\boldsymbol \rho}^{n+1, *}$,  ${\bf J}^{n+1, *}$,  ${\boldsymbol \kappa}^{n+1, *}$ by numerical integration in velocity, i.e. \eqref{eq:rho_j_kappa}. Compute ${\bf f}_1$ from \eqref{eq:f1} with $\boldsymbol\rho^{n+1, *}$, $\bJ^{n+1, *}$, $\boldsymbol\kappa^{n+1, *}$. 
  \item  Compute ${\bf f}_2\doteq {\bf f} - {\bf f}_1$ and perform a weighted SVD truncation on ${\bf f}_2$ in the low rank format to obtain
  $
  \sqrt{\bf w}  \star \mathcal{T}_\varepsilon (\frac{1}{\sqrt{\bf w}} \star {\bf f}_2).
  $
  \item Compute ${\bf f}^M_1$.
  \begin{enumerate}
  \item Update macroscopic mass, momentum and energy density, $\boldsymbol\rho^{n+1, M}$, $\bJ^{n+1, M}$, ${\boldsymbol e}^{n+1, M}$, using the kinetic flux vector splitting, in a flux-difference form for \eqref{eq:mass}-\eqref{eq:ener} using the same second order SSP multi-step method \eqref{eq:Uupdate}. 
  \item Compute $E^{n+1, M}$ from $\boldsymbol\rho^{n+1, M}$ by Poisson solver. 
  \item Compute $\boldsymbol\kappa^{n+1, M}$ via \eqref{eq:kinetic_update}. 
\item  Construct ${\bf f}^M_1$ by $\boldsymbol\rho^{n+1, M}$, $\bJ^{n+1, M}$, $\boldsymbol\kappa^{n+1, M}$ according to \eqref{eq:f1}.
\end{enumerate}
  \item Update the compressed low-rank solution via \eqref{eq: Tc}, 
  $${\bf f}^{n+1} \doteq T^M_c({\bf f}) = {\bf f}^M_1 +\sqrt{\bf w}  \star \mathcal{T}_\varepsilon (\frac{1}{\sqrt{\bf w}} \star {\bf f}_2).$$
\end{enumerate}
\end{enumerate}
  \end{algorithm}

In summary, the proposed LoMaC low rank update of the VP solution starts with an adding basis step that employs a traditional high order finite difference scheme and an SSP multi-step time integrator. The algorithm is followed by an update of macroscopic conservation laws using KFVS, together with a projection of the low rank solution to enjoy the same macroscopic mass, momentum and energy density as the macroscopic conservation laws. Last, we apply an SVD type truncation step to remove redundancy in basis to ensure the low rank solution representation. Note that for one step evolution,  macroscopic and kinetic parts are independent except using $\boldsymbol\rho^{n+1, M}$, $\bJ^{n+1, M}$, $\boldsymbol\kappa^{n+1, M}$ to construct ${\bf f}^M_1$ from \eqref{eq:f1}.

\begin{prop} (Local mass, momentum and energy conservation.) The proposed LoMaC low rank algorithm locally conserves the macroscopic mass, momentum and energy. 
\end{prop}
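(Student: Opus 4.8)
The plan is to prove the statement one time step at a time, by induction on $n$: assuming that at levels $n-2,\dots,n$ the stored macroscopic state $(\boldsymbol\rho^{m,M},\bJ^{m,M},e^{m,M})$ coincides with the discrete moments of $({\bf f}^m,\bE^m)$, I would show (i) that the KFVS update \eqref{eq:Uupdate} producing $(\boldsymbol\rho^{n+1,M},\bJ^{n+1,M},e^{n+1,M})$ is in flux-difference conservation form with a purely local source, and (ii) that the new kinetic--field pair $({\bf f}^{n+1},\bE^{n+1})$ has exactly these macroscopic densities. Claim (i) is essentially by inspection; claim (ii) is the heart of the argument and closes the induction.

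For (ii) I would first invoke the construction of ${\bf f}_1^M$: by the same computation used for ${\bf f}_1$ in \cite{guo2022lowrank} — expand $\langle {\bf f}_1^M,{\bf g}\rangle$ for ${\bf g}$ in $\mathcal{N}=\text{span}\{{\bf 1}_v,{\bf v},{\bf v}^2\}$, the span of the moment weights, and use the $\langle\cdot,\cdot\rangle_{\bf w}$-orthogonality of $\{{\bf 1}_v,{\bf v},{\bf v}^2-c{\bf 1}_v\}$ together with the definition of $c$ — the tensor ${\bf f}_1^M$ from \eqref{eq:f1} has discrete mass, momentum and kinetic energy density exactly $\boldsymbol\rho^{n+1,M},\bJ^{n+1,M},\boldsymbol\kappa^{n+1,M}$. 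Next I would show that the truncated remainder $\sqrt{\bf w}\star\mathcal{T}_\varepsilon\bigl(\tfrac{1}{\sqrt{\bf w}}\star{\bf f}_2\bigr)$ carries zero mass, momentum and kinetic energy density. By \eqref{eq: f_decom_d}, $\tilde{\bf f}_2=(I-P_{\mathcal{N}})\tilde{\bf f}$ has every $v$-fiber $\langle\cdot,\cdot\rangle_{\bf w}$-orthogonal to $\mathcal{N}$; equivalently, setting $M\doteq\tfrac{1}{\sqrt{\bf w}}\star{\bf f}_2=\sqrt{\bf w}\star\tilde{\bf f}_2$, every row of $M$ is standard-$l^2$-orthogonal to the set $\{\sqrt{\bf w}\star{\bf g}:{\bf g}\in\mathcal{N}\}$. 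An $l^2$ SVD truncation keeps only right singular vectors of $M$, which span a subspace of $\mathrm{row}(M)$, so the rows of $\mathcal{T}_\varepsilon(M)$ remain $\perp_{l^2}\{\sqrt{\bf w}\star{\bf g}\}$; since $\langle\sqrt{\bf w}\star A,{\bf g}\rangle=\langle A,\sqrt{\bf w}\star{\bf g}\rangle$ for any matrix $A$, the truncated remainder has vanishing low-order moments. Adding the two pieces, ${\bf f}^{n+1}=T^M_c({\bf f})$ has discrete moments $\boldsymbol\rho^{n+1,M},\bJ^{n+1,M},\boldsymbol\kappa^{n+1,M}$; since $\bE^{n+1}$ is obtained from $\boldsymbol\rho^{n+1,M}$ via Poisson it equals $\bE^{n+1,M}$, so by \eqref{eq:kinetic_update} the physical discrete energy density $\boldsymbol\kappa^{n+1}+\tfrac12|\bE^{n+1}|^2$ of $({\bf f}^{n+1},\bE^{n+1})$ equals $e^{n+1,M}$. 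This proves (ii).

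For (i) I would only note that \eqref{eq:Uupdate} has the structure (convex combination of $U^{n-2}_j,U^n_j$) $+\tfrac32\Delta t\bigl(-\tfrac{1}{h_x}(\hat F^n_{\jp}-\hat F^n_{\jm})+S^n_j\bigr)$, with a single-valued interface flux $\hat F^n_{\jp}=\hat F^{n,+}_{\jp}+\hat F^{n,-}_{\jp}$ and $S^n_j=(0,\rho^n_jE^n_j,0)^\top$; the mass and energy components have zero source and are thus in pure flux-difference (conservation) form, while the momentum component is in flux-difference form with a pointwise force source — precisely the fully discrete analogue of \eqref{eq:mass}--\eqref{eq:ener}. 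Together with (ii) and the inductive hypothesis, which (seeded by a moment-consistent initialization) makes the fluxes \eqref{eq:Fpm} computed from ${\bf f}^n$ and the source $\rho^n_jE^n_j$ consistent with the stored macroscopic state, this establishes that the macroscopic mass, momentum and energy densities of the kinetic solution obey a local discrete conservation law at every step.

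I expect the only genuine obstacle to be the zero-moment claim for the truncated remainder, i.e.\ making precise that an $l^2$ SVD truncation cannot re-inflate the low-order moments: one must argue that truncation only shrinks the $v$-fiber (row) subspace and carefully track the $\sqrt{\bf w}$ / $\tfrac{1}{\sqrt{\bf w}}$ rescalings, keeping in mind that the moments are measured in the unweighted inner product while the orthogonal projection was performed in the weighted one. The remaining ingredients — the exact moments of ${\bf f}_1^M$ and the conservation-form structure of \eqref{eq:Uupdate} — are either quoted from \cite{guo2022lowrank} or immediate.
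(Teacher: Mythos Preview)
Your proposal is correct and follows exactly the route the paper intends: the paper's own proof is the single sentence ``The proof follows directly from the construction of the algorithm,'' and what you have done is carefully unpack that construction --- the moment-exactness of ${\bf f}_1^M$, the zero-moment property of the weighted-SVD-truncated remainder via the row-space argument, and the flux-difference form of \eqref{eq:Uupdate}. Your inductive framing (ensuring the KFVS fluxes and source computed from ${\bf f}^n$ are consistent with the stored macroscopic state) and your identification of the remainder-moment step as the only nontrivial point are both appropriate; nothing further is needed.
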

\begin{proof} The proof follows directly from the construction of the algorithm.
\end{proof}

 \section{2D2V Vlasov-Poisson system by the HT format}
 We extend the proposed conservative algorithm to the 2D2V case by the HT format. 
 Below, we briefly review the fundamentals of the HT format for efficiently representing tensors in $d$ dimensions, and the low rank tensor method with the HT format for solving the 2D2V VP system \eqref{vlasov1}. 
\begin{equation}\label{eq:vp4d}
	f_t + v_1f_{x_1} + v_2f_{x_2}+ E_1f_{v_1} + E_2f_{v_2} = 0,
\end{equation}
where the electric field $(E_1, E_2)$ is solved from the coupled Poisson's equation. The macroscopic equations can be obtained from taking moments of \eqref{eq:vp4d} in the form of \eqref{eq:mass}-\eqref{eq:ener}. In this paper, we use full grid (i.e. not low rank) representation for the spatial variables $(x_1, x_2)$, due to the need to solve macroscopic equations by classical conservative flux-difference numerical schemes in the proposed LoMaC algorithm framework. It is possible to further explore the low rank structure in $(x_1, x_2)$ direction, which is left as our future work. 
 
The HT format is fully characterized by the three key components, including a dimension tree, frames at leaf nodes and transfer tensors at non-leaf nodes, see {Figure \ref{fig:dimtree1}} for the data layout. 
 In particular, we denote the dimension index $D=\{(1,2), 3, 4\}$  and define a \emph{dimension tree} $\mathcal{T}$ which is a binary tree containing a subset $\alpha\subset D$ at each node. Furthermore, $\mathcal{T}$ has $D$ as the root node and $\{(1, 2), 3, 4\}$ as the leaf nodes. The non-leaf node $\alpha$ has two children nodes.
  For example, the dimension tree $\mathcal{T}$ given in {Figure \ref{fig:dimtree1}} can be used to approximate $f((x_1,x_2),v_1,v_2)$ in \eqref{eq:vp4d} in the HT format, 
 \beq
\label{eq:htd_f_nested_0}
{\bf f} = \sum_{l_{12}=1}^{r_{12}}{\sum_{l_{34}=1}^{r_{34}}} \bB^{(1,2, 3, 4)}_{l_{12},l_{34}}\bU_{l_{12}}^{(1, 2)}\otimes \bU_{l_{34}}^{(3, 4)},
\eeq
  with 
\beq
\label{eq: U34_htd}
\bU_{l_{34}}^{(3, 4)} = \sum_{l_{3}=1}^{r_{3}}{\sum_{l_{4}=1}^{r_{4}}} \bB^{(3, 4)}_{l_{3},l_{4}, l_{34}}\bU_{l_{3}}^{(3)}\otimes \bU_{l_{4}}^{(4)},\quad l_{34} =1,\ldots,r_{34}. 
\eeq
 Here the tensor stores frames at each leaf node (i.e. $\bU^{(1, 2)}$, $\bU^{(3)}$ and $\bU^{(4)}$) and a third order transfer tensor at each non-leaf node (i.e. $\bB^{(1,2, 3, 4)}$ and $\bB^{(3, 4)}$) based on the dimension tree.  Denote $ \br = \{r_\alpha\}_{\alpha\in\mathcal{T}}$ as the hierarchical ranks. The storage of the HT format scales as $\mathcal{O}(2r^3+rN_{x_1}N_{x_2} + r(N_{v_1}+N_{v_2}))$, where $r=\max \br$ and $N_{\cdot}$ is the number of grid points in the corresponding dimension. If $r$ is reasonably low, then the HT format avoids the curse of dimensionality. 
 
 \begin{figure}
\centering
\subfigure[]{
 \begin{tikzpicture}[
     level/.style={sibling distance=40mm/#1},
  every node/.style = {shape=rectangle, rounded corners,
    draw, align=center,
    top color=white, bottom color=blue!20}
    ]
  \node {$\{(1,\,2),\,3,\,4\}$}
    child { node {$\{(1,\,2)\}$} 
    }
    child { node {$\{3,\,4\}$}
    	child{ node{$\{3\}$}}
	child{ node{$\{4\}$}} 
	};
\end{tikzpicture}}\quad
\subfigure[]{
\begin{tikzpicture}[
  every node/.style = {shape=rectangle, rounded corners,
    draw, 
    top color=white, bottom color=blue!20},
    level/.style={sibling distance=40mm/#1}
    ]
  \node {$\bB^{((1,2),3,4)}$}
      child { node {$\bU^{(1,2)}$} 
    }
    child { node {$\bB^{(3,4)}$}
    	child{ node{$\bU^{(3)}$}}
	child{ node{$\bU^{(4)}$}} 
	};
\end{tikzpicture}}
 \caption{Dimension tree $\mathcal{T}$ and associated data layout to express fourth-order tensors in the HT format. }
\label{fig:dimtree1}
\end{figure}


\subsection{A LoMaC low-rank tensor method in HT for the 2D2V VP system}
We follow the conservative low rank tensor method for updating the 2D2V VP solution in \cite{guo2022conservative}, and further propose a new LoMaC version for local energy conservation property in a similar spirit to the 1D1V system.  We assume at each time step, the solution ${\bf f}$ is expressed as the third-order tensor in the HT format \eqref{eq:htd_f_nested_0}-\eqref{eq: U34_htd} 
with dimension tree $\mathcal{T}$ as shown in Figure \ref{fig:dimtree1}. 

In the proposed 2D2V LoMaC algorithm, the computation of the projection operator $P_{\mathcal{N}}$, as well as how ${\bf f}_1$ depends on macroscopic conservative variables, are essential. Their computations in the 2D2V case, with the new dimension tree in Figure \ref{fig:dimtree1}, are slightly different from that in \cite{guo2022conservative}. Here 
 \beq
 \label{eq: 2d2v_subspa}
 \mathcal{N}
 = \text{span}\{{\bf 1}_{v_1 \otimes v_2}, {\bf v}_1\otimes {\bf 1}_{v_2},  {\bf 1}_{v_1}\otimes{\bf v}_2, {\bf v}_1^2\otimes {\bf 1}_{v_2}+{\bf 1}_{v_1}\otimes{\bf v}_2^2\}.
 \eeq
 We consider the Hilbert space with the weighted inner product $ \langle f, g\rangle_{{\bf w}^{(1)}}$,  $ \langle f, g\rangle_{{\bf w}^{(2)}}$,  $ \langle f, g\rangle_{({\bf w}^{(1)}\otimes {\bf w}^{(2)})}$. Here, ${\bf w}^{(1)}$ and ${\bf w}^{(2)}$ are vectors consists of point values of the weight function (e.g. $w(v) = \exp(-\frac{v^2}{2})$) on the corresponding velocity grid. 
In particular, $ \langle f, g\rangle_{{\bf w}^{(1)}}$ and $ \langle f, g\rangle_{{\bf w}^{(2)}}$ are defined similar to 
\eqref{eq: inner_prod_d}, and
\beq
 \label{eq: 2d2v_inner_w}
  \langle f, g\rangle_{({\bf w}^{(1)}\otimes {\bf w}^{(2)})} = h_{v_1}h_{v_2} \sum_{j_1=1}^{N_{v_1}} \sum_{j_2=1}^{N_{v_2}}  f_{j_1,j_2} g_{j_1,j_2} w^{(1)}_{j_1} w^{(2)}_{j_2},
 \eeq
which is in analog to the weighted inner product at the continuous level $\int f(\bv)g(\bv)w(\bv)d\bv$. 
 
 We first construct a set of orthonormal basis of $\mathcal{N}$, 
 $\{\bV_1, \cdots, \bV_4\}$ in the $(v_1,v_2)$ dimensions from a set of orthonormal basis for $v_1$ and $v_2$ directions as
%
\begin{eqnarray}
\bV_1 &=& \frac{1}{c_1^2} {\bf 1}_{v_1}\otimes {\bf 1}_{v_2},  \quad
\bV_2 = \frac{1}{c_1 c_2} {\bf v}_1\otimes {\bf 1}_{v_2}, \quad
\bV_3 =  \frac{1}{c_1 c_2} {\bf 1}_{v_1}\otimes {\bf v}_2, \nonumber\\
\bV_4 &=& \frac{1}{\sqrt{2}}\left(\frac{1}{c_1c_3}\left(({\bf v}_1^2-c{\bf 1}_{v_1})\right) \otimes ({\bf 1}_{v_2})+\frac{1}{c_1c_3}({\bf 1}_{v_1})\otimes\left(({\bf v}_2^2-c {\bf 1}_{v_2})\right)\right), 
\label{eq: V1_4}
\end{eqnarray} 
with constant $c= \frac{\langle \mathbf{1}_{v_1},\bv_1^2 \rangle_{{\bf w}^{(1)}} }{\langle {\mathbf{1}_{v_1},\mathbf{1}_{v_1}}\rangle_{{\bf w}^{(1)}}}$ for orthogonalization of the basis. $c_l$, $l=1, 2, 3$ are normalization constants for the corresponding basis of ${\bf 1}_{v_1}$, ${\bf v}_1$ and ${\bf v}_1^2-c  {\bf 1}_{v_1}$, where we have assumed the same weight function and discretization in the $v_1$ and $v_2$ directions for simplicity.
With the rescaling, 
\beq
\label{eq: U34_htd_f1}
(\bU_1^{(3, 4)})_{l_{34}} = \sum_{l_{3}=1}^{3}{\sum_{l_{4}=1}^{3}} (\bB_1^{(3, 4)})_{l_{3},l_{4}, l_{34}} ({\bf w}^{(1)}\star(\bU_1^{(3)})_{l_{3}})\otimes ({\bf w}^{(2)}\star(\bU_1^{(4)})_{l_{4}}),\quad l_{34} =1,\ldots,4.
\eeq 
That is, 
\begin{eqnarray}
(\bU_1^{(3, 4)})_1 &=& \frac{1}{c_1^2} ({\bf w}^{(1)}\star{\bf 1}_{v_1})\otimes ({\bf w}^{(2)}\star{\bf 1}_{v_2}),  \nonumber\\
(\bU_1^{(3, 4)})_2 &=& \frac{1}{c_1 c_2} ({\bf w}^{(1)}\star{\bf v}_1)\otimes ({\bf w}^{(2)}\star{\bf 1}_{v_2}), \nonumber\\
(\bU_1^{(3, 4)})_3 &=& \frac{1}{c_1 c_2} ({\bf w}^{(1)}\star{\bf 1}_{v_1})\otimes ({\bf w}^{(2)}\star{\bf v}_2), \nonumber\\
(\bU_1^{(3, 4)})_4 &=& \frac{1}{\sqrt{2}}\left(\frac{1}{c_1c_3}\left({\bf w}^{(1)}\star({\bf v}_1^2-c {\bf 1}_{v_1})\right) \otimes ({\bf w}^{(2)}\star{\bf 1}_{v_2})+\frac{1}{c_1c_3}({\bf w}^{(1)}\star{\bf 1}_{v_1})\otimes\left({\bf w}^{(2)}\star({\bf v}_2^2-c {\bf 1}_{v_2})\right)\right). \nonumber\\
\label{eq: U1_4}
\end{eqnarray} 
In particular, we construct the three frame vectors for node $\{3\}$ as
\begin{eqnarray}
(\bU_1^{(3)})_1 = \frac{1}{c_1} {\bf w}^{(1)}\star{\bf 1}_{v_1} , \quad
(\bU_1^{(3)})_2 = \frac{1}{c_2}{\bf w}^{(1)}\star{\bf v}_1 , \quad
(\bU_1^{(3)})_3 = \frac{1}{c_3}{\bf w}^{(1)}\star({\bf v}_1^2-c {\bf 1}_{v_1}).
\label{eq: U3U4}
\end{eqnarray}
We have the same three frame vectors for the node $\{4\}$ but for $v_2$, again assuming that the weight function and discretization in $v_2$ is the same as $v_1$, 
\begin{eqnarray}
(\bU_1^{(4)})_1 = \frac{1}{c_1} {\bf w}^{(2)}\star{\bf 1}_{v_2} , \quad
(\bU_1^{(4)})_2 = \frac{1}{c_2}{\bf w}^{(2)}\star{\bf v}_2 , \quad
(\bU_1^{(4)})_3 = \frac{1}{c_3}{\bf w}^{(2)}\star({\bf v}_2^2-c {\bf 1}_{v_2}).
\label{eq: U3U4_b}
\end{eqnarray} 
The transfer tensor $\bB_1^{(3, 4)}$ is a third order tensor of size $3\times 3 \times4$. It has zero elements, except the following specification for $(\bB_1^{(3, 4)})_{l_{3},l_{4}, l_{34}}$
\begin{equation}
\label{eq:B34f}
(\bB_1^{(3,4)})_{1,1,1} =(\bB_1^{(3,4)})_{2,1,2}=(\bB_1^{(3,4)})_{1,2,3}= 1, \quad
(\bB_1^{(3,4)})_{3,1,4} = (\bB_1^{(3,4)})_{1,3,4} = \frac1{\sqrt{2}}.
\end{equation} 


\begin{prop}\label{prop:2d2v} 
Let ${\bf f}_1$ come from the scaling/rescaling, together with the orthogonal projection of scaled ${\bf f}$ with respect to the weighted inner product \eqref{eq: 2d2v_inner_w} onto the subspace $\eqref{eq: 2d2v_subspa}$.  Assume ${\bf f}$ is written in the low rank HT format \eqref{eq:htd_f_nested_0}. ${\bf f}_1$ can be represented in  low rank HT format (consistently with the subscript $1$ in the notations),
\begin{eqnarray}
\label{eq:htd_f1_nested}
{P}_{\mathcal{N}} ({\bf f}) & \doteq{\bf f}_1 = & \sum_{l=1}^{4}
(\bU_1^{(1, 2)})_{l}\otimes (\bU_1^{(3, 4)})_{l},
\end{eqnarray}
where we introduce the notation of ${P}_{\mathcal{N}}$ as the rescaled orthogonal projection onto ${\mathcal{N}}$. 
Let discrete macroscopic charge, current and kinetic energy densities of ${\bf f}$ be
\begin{align}
\left(\begin{array}{l}
{\boldsymbol\rho}\\
{\bf J}_1\\
{\bf J}_2\\
{\boldsymbol \kappa} 
\end{array}
\right )
&= \sum_{l_{12}} \sum_{l_{34}} \bB^{(1,2, 3, 4)}_{l_{12},l_{34}}
 \left 
 \langle \bU_{l_{34}}^{(3, 4)}, 
 \left(\begin{array}{l}
{\bf 1}_{v_1 \otimes v_2} \\
{\bf v}_1\otimes {\bf 1}_{v_2}\\
{\bf 1}_{v_1}\otimes{\bf v}_2\\
\frac12{\bf v}_1^2\otimes {\bf 1}_{v_2}+\frac12{\bf 1}_{v_1}\otimes{\bf v}_2^2
\end{array}
\right )
\right \rangle
\bU_{l_{12}}^{(1, 2)}.
\label{eq:rho_j_kappa_2d2v}
\end{align}
The specifications of the frame vectors and transfer tensors of ${\bf f}_1$ are outlined below.
\bit
\item $(\bU_1^{(3, 4)})_k$ 
in \eqref{eq: U1_4} is constructed with the frame vectors for nodes $\{3\}$ and $\{4\}$ as \eqref{eq: U3U4} and \eqref{eq: U3U4_b} and the transfer tensor $\bB_1^{(3, 4)}$ from \eqref{eq:B34f}. 
\item 
$(\bU_1^{(1, 2)})_{k}$, $k=1, \cdots, 4$, are given as
\beq
\label{eq: U12_htd_b}
(\bU_1^{(1, 2)})_{1} =\frac{1}{c_1^2} {\boldsymbol\rho}, \quad
(\bU_1^{(1, 2)})_{2} =\frac{1}{c_1 c_2} {\bf J}_1, \quad
(\bU_1^{(1, 2)})_{3} =\frac{1}{c_1 c_2}  {\bf J}_2, \quad
(\bU_1^{(1, 2)})_{4} =\frac{\sqrt{2}}{c_1c_3}({\boldsymbol\kappa} - c\boldsymbol\rho). 
\eeq 
\eit
\end{prop}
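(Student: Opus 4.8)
\medskip
\noindent\textit{Proof proposal.}
The plan is to carry out the rescaled orthogonal projection $P_{\mathcal N}$ explicitly in the $\langle\cdot,\cdot\rangle_{({\bf w}^{(1)}\otimes{\bf w}^{(2)})}$-orthonormal basis $\{\bV_1,\dots,\bV_4\}$ of $\mathcal N$ listed in \eqref{eq: V1_4}, and then to read off from the resulting expansion (i) that the four expansion coefficients are exactly the rescaled macroscopic densities in \eqref{eq: U12_htd_b}, and (ii) that the four velocity factors are exactly the HT blocks $(\bU_1^{(3, 4)})_k$ assembled from the leaf frames \eqref{eq: U3U4}, \eqref{eq: U3U4_b} and the transfer tensor $\bB_1^{(3, 4)}$ of \eqref{eq:B34f}. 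Throughout, the weight scaling and all inner products act only on the $(v_1,v_2)$ slices, so the four expansion coefficients are full-grid tensors on the $(x_1,x_2)$ grid; this is precisely the HT data layout of Figure \ref{fig:dimtree1} with node $\{(1,2)\}$ kept in full grid. The argument mirrors the 1D1V decomposition of \cite{guo2022lowrank} and the 2D2V one of \cite{guo2022conservative}, only with the present dimension tree.

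First I would verify that $\{\bV_1,\dots,\bV_4\}$ is an orthonormal basis of $\mathcal N$ for $\langle\cdot,\cdot\rangle_{({\bf w}^{(1)}\otimes{\bf w}^{(2)})}$. In one dimension the definition $c=\langle{\bf 1}_{v_1},\bv_1^2\rangle_{{\bf w}^{(1)}}/\|{\bf 1}_{v_1}\|_{{\bf w}^{(1)}}^2$ makes $\{{\bf 1}_{v_1},\bv_1,\bv_1^2-c{\bf 1}_{v_1}\}$ mutually $\langle\cdot,\cdot\rangle_{{\bf w}^{(1)}}$-orthogonal: the two pairings involving $\bv_1$ vanish since $w$ is even and the velocity grid symmetric, and $\langle{\bf 1}_{v_1},\bv_1^2-c{\bf 1}_{v_1}\rangle_{{\bf w}^{(1)}}=0$ by the definition of $c$; dividing by the $\langle\cdot,\cdot\rangle_{{\bf w}^{(1)}}$-norms $c_1,c_2,c_3$ normalizes the three vectors. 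Tensorizing, $\bV_1,\bV_2,\bV_3$ are products of orthonormal one-dimensional vectors, hence orthonormal; $\bV_4$ is $\tfrac1{\sqrt2}$ times a sum of two such products which are mutually orthogonal, so $\|\bV_4\|_{({\bf w}^{(1)}\otimes{\bf w}^{(2)})}=1$, and $\langle\bV_4,\bV_i\rangle_{({\bf w}^{(1)}\otimes{\bf w}^{(2)})}=0$ for $i=1,2,3$ for the same parity / definition-of-$c$ reasons. Since $\bV_4$ lies in the span of ${\bf 1}_{v_1}\otimes{\bf 1}_{v_2}$ and $\bv_1^2\otimes{\bf 1}_{v_2}+{\bf 1}_{v_1}\otimes\bv_2^2$, the span of $\{\bV_k\}$ equals $\mathcal N$ in \eqref{eq: 2d2v_subspa}, and the four vectors are linearly independent whenever $N_{v_1},N_{v_2}\ge 3$.

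Next, writing ${\bf w}={\bf w}^{(1)}\otimes{\bf w}^{(2)}$ for brevity, by definition of the rescaled projection ${\bf f}_1={\bf w}\star P^{\perp}_{\mathcal N}\!\big(\tfrac1{{\bf w}}\star{\bf f}\big)$ with $P^\perp_{\mathcal N}$ the genuine $\langle\cdot,\cdot\rangle_{{\bf w}}$-orthogonal projection onto $\mathcal N$; expanding in the orthonormal basis gives
\[
{\bf f}_1=\sum_{k=1}^4\Big\langle \tfrac1{{\bf w}}\star{\bf f},\ \bV_k\Big\rangle_{{\bf w}}\ \big({\bf w}\star\bV_k\big).
\]
The key algebraic fact is that the weight drops out of the coefficients, $\big\langle \tfrac1{{\bf w}}\star{\bf f},g\big\rangle_{{\bf w}}=\langle{\bf f},g\rangle$ for every $g$, where $\langle\cdot,\cdot\rangle$ is the unweighted $l^2$ velocity pairing (still carrying the quadrature weights $h_{v_1}h_{v_2}$). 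Substituting the explicit $\bV_k$ from \eqref{eq: V1_4} and comparing with the definitions \eqref{eq:rho_j_kappa_2d2v} of $\boldsymbol\rho,{\bf J}_1,{\bf J}_2,\boldsymbol\kappa$, these four pairings are $\tfrac1{c_1^2}\boldsymbol\rho$, $\tfrac1{c_1 c_2}{\bf J}_1$, $\tfrac1{c_1 c_2}{\bf J}_2$, and $\tfrac{\sqrt2}{c_1 c_3}(\boldsymbol\kappa-c\boldsymbol\rho)$ (for $k=4$ one uses $\boldsymbol\kappa=\tfrac12\langle{\bf f},\bv_1^2\otimes{\bf 1}_{v_2}+{\bf 1}_{v_1}\otimes\bv_2^2\rangle$ together with the definition of $c$), which are precisely the $(\bU_1^{(1,2)})_k$ of \eqref{eq: U12_htd_b}. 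Separately, expanding ${\bf f}$ in its HT form \eqref{eq:htd_f_nested_0} and using linearity of the velocity inner product through the nested frames \eqref{eq: U34_htd} shows that $\boldsymbol\rho,{\bf J}_1,{\bf J}_2,\boldsymbol\kappa$ are indeed given by \eqref{eq:rho_j_kappa_2d2v}.

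It then remains to identify the velocity factors ${\bf w}\star\bV_k$ with the HT blocks $(\bU_1^{(3, 4)})_k$. One checks directly that ${\bf w}^{(1)}\star{\bf 1}_{v_1}$, ${\bf w}^{(1)}\star\bv_1$, ${\bf w}^{(1)}\star(\bv_1^2-c{\bf 1}_{v_1})$ are, up to $c_1,c_2,c_3$, the three frame vectors $(\bU_1^{(3)})_{1,2,3}$ of \eqref{eq: U3U4}, and similarly $(\bU_1^{(4)})_{1,2,3}$ for $v_2$ in \eqref{eq: U3U4_b}, and that contracting these frames against the $3\times 3\times 4$ transfer tensor $\bB_1^{(3, 4)}$ of \eqref{eq:B34f} reproduces ${\bf w}\star\bV_k=(\bU_1^{(3, 4)})_k$ in \eqref{eq: U1_4}. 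Collecting everything yields ${\bf f}_1=\sum_{k=1}^4(\bU_1^{(1,2)})_k\otimes(\bU_1^{(3, 4)})_k$, i.e.\ an HT tensor on the tree of Figure \ref{fig:dimtree1} with a diagonal root transfer tensor, node-$\{(1,2)\}$ frames $(\bU_1^{(1,2)})_k$, node-$\{3,4\}$ transfer tensor $\bB_1^{(3, 4)}$, and node-$\{3\}$, node-$\{4\}$ frames \eqref{eq: U3U4}, \eqref{eq: U3U4_b} (an optional QR re-orthonormalization of the frames, absorbing the normalizing and macroscopic factors into the transfer tensors, puts it into a standard HT form if desired). The individual computations are routine linear algebra; the one place demanding care is the bookkeeping between the clean ``sum over an orthonormal basis'' form used in the derivation and the concrete frame/transfer-tensor layout asserted in the statement — in particular confirming that after the rescaling exactly one power of ${\bf w}^{(1)}$ (resp.\ ${\bf w}^{(2)}$) ends up in each leaf frame, so that ${\bf w}\star\bV_k$ matches \eqref{eq: U1_4} and does not pick up a spurious extra weight.
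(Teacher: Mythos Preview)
Your proof is correct and follows essentially the same approach as the paper's own (very brief) proof: construct the $\langle\cdot,\cdot\rangle_{({\bf w}^{(1)}\otimes{\bf w}^{(2)})}$-orthonormal basis $\{\bV_k\}$ of $\mathcal N$, expand the rescaled projection in this basis to identify the spatial coefficients as the rescaled macroscopic densities \eqref{eq: U12_htd_b}, and then rescale the velocity factors to obtain the HT leaf frames and transfer tensor \eqref{eq: U3U4}--\eqref{eq:B34f}. Your version simply fills in the verifications (orthonormality, the weight-cancellation identity $\langle\tfrac1{{\bf w}}\star{\bf f},g\rangle_{{\bf w}}=\langle{\bf f},g\rangle$, and the bookkeeping that exactly one power of each weight lands in each leaf) that the paper leaves implicit.
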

\begin{proof} The construction of ${\bf f}_1$ in \eqref{eq:htd_f1_nested} comes from first constructing the orthonormal basis of 
$ \mathcal{N}$ from \eqref{eq: 2d2v_subspa}, followed by rescaling for the $(\bU_1^{(3, 4)})_k$, $k=1, \cdots, 4$. 
\eqref{eq: U12_htd_b} comes from obtaining the coefficients from the weighted orthogonal projection onto $\mathcal{N}$. 
\end{proof}

Now we are ready to introduce the LoMaC low-rank tensor 2D2V algorithm:  
\begin{enumerate}[Step 1:]
\item The 2D2V low rank algorithm presented in \cite{guo2022low}. In particular, starting from the solution ${\bf f}^n$ in the low rank format \eqref{eq:htd_f_nested_0}, we can add basis from a step of time integration (e.g. second order multi-step method) to obtain the intermediate solution ${\bf f}^{n+1, \star}$ in the same low rank format but with higher rank. 
\item Update macroscopic solutions $\boldsymbol\rho^{n+1, M}$, $\bJ_1^{n+1, M}$, $\bJ_2^{n+1, M}$, $\boldsymbol\kappa^{n+1, M}$ by using a conservative high order finite difference scheme with KFVS, coupled with the corresponding time integrator, to solve the macroscopic conservation laws \eqref{eq:mass}-\eqref{eq:ener} in a 2D setting. 
\item Perform the decomposition of ${\bf f}^{n+1, \star}$ obtained from Step 1 as ${\bf f}^{n+1, \star} = P_{\mathcal{N}} ({\bf f}) + (I - P_{\mathcal{N}})({\bf f}) \doteq {\bf f}_1 + {\bf f}_2$. 
\begin{enumerate}
\item Replace ${\bf f}_1$ with a new ${\bf f}_1^M$ via \eqref{eq: U12_htd_b}, but with $\boldsymbol\rho^{n+1, M}$, $\bJ_1^{n+1, M}$, $\bJ_2^{n+1, M}$ obtained from {Step 2} for local conservation of macroscopic variables. 
\item Perform a weighted SVD truncation to ${\bf f}_2$, followed with a projection operator $(I-P_{\mathcal{N}})$ to ensure zero mass momentum and kinetic energy. That is, to compute $(I-P_{\mathcal{N}}) (\sqrt{\bf w}\star \mathcal{T}_\varepsilon (\frac{1}{\sqrt{\bf w}}\star{\bf f}_2))$. 
\item The updated ${\bf f}^{n+1} = {\bf f}_1^M + (I-P_{\mathcal{N}}) (\sqrt{\bf w} \star \mathcal{T}_\varepsilon (\frac{1}{\sqrt{\bf w}}\star{\bf f}_2))$ from previous two sub-steps. 
\end{enumerate} 
\end{enumerate}
Similar to the 1D1V case, the proposed algorithm enjoys the local conservation in macroscopic mass, momentum and energy. There are two crucial ingredients in the LoMaC algorithm. On one hand computational efficiency is realized by the low rank representation of the solution, mitigating the curse of dimensionality. On the other hand, we simultaneously evolve the macroscopic conservation laws by using kinetic fluxes in a local conservative manner during each time step; we then project the low rank kinetic solution onto a subspace with conservation on macroscopic mass, momentum and energy. 
We summarize the flowchart as the following Algorithm \ref{alg: low_rank_lomac_2D2V}. 

 \bigskip
\begin{algorithm}[H]
\label{alg: low_rank_lomac_2D2V}
  \caption{The LoMaC low rank tensor algorithm for the 2D2V VP system.}
    \begin{enumerate}
\item Initialization:
  \begin{enumerate}
  \item Initial distribution function $f(x_1, x_2, v_1, v_2, t=0)$ in a low rank format \eqref{eq:htd_f_nested_0}-\eqref{eq: U34_htd}. 
 \end{enumerate}
\item For each time step evolution from $t^n$ to $t^{n+1}$: update ${\bf f}^{n+1}$ from ${\bf f}^n$ in the low rank format.
  \begin{enumerate}
  \item Compute the charge density $\boldsymbol \rho^{n}$ in the $(x_1, x_2)$ full grid format, followed by computing $\bE^n = (E_1^n, E_2^n)$ from the Poisson equation \eqref{poisson} by fast Fourier transform or a high order finite difference algorithm.
  \item Update ${\bf f}^{n+1,*}$, by adding basis according to the dimension tree $\mathcal{T}$ as shown in Figure \ref{fig:dimtree1}. The procedure is similar to that outlined in \cite{guo2022low}. 
  \item  
  Compute ${\boldsymbol \rho}^{n+1, *}$,  ${\bf J}_1^{n+1, *}$, ${\bf J}_2^{n+1, *}$, ${\boldsymbol \kappa}^{n+1, *}$ by low rank numerical integration in velocity, i.e. \eqref{eq:rho_j_kappa}. Compute ${\bf f}_1$ from \eqref{eq: U12_htd_b} with $\boldsymbol\rho^{n+1, *}$, ${\bf J}_1^{n+1, *}$, ${\bf J}_2^{n+1, *}$, $\boldsymbol\kappa^{n+1, *}$.
  \item  Compute ${\bf f}_2\doteq {\bf f} - {\bf f}_1$ and perform a weighted SVD truncation on ${\bf f}_2$ in the low rank 2D2V format \cite{guo2022conservative} to obtain
  $
\left( \sqrt{\bf w}  \star \mathcal{T}_\varepsilon (\frac{1}{\sqrt{\bf w}} \star {\bf f}_2)\right).
  $
Finally, we apply the $(I-P_\mathcal{N})$ operator to $\left( \sqrt{\bf w}  \star \mathcal{T}_\varepsilon (\frac{1}{\sqrt{\bf w}} \star {\bf f}_2)\right)$ to ensure its zero mass, momentum and kinetic energy after truncation.
  \item Compute ${\bf f}^M_1$.
  \begin{enumerate}
  \item Update macroscopic mass, momentum and energy density, $\boldsymbol\rho^{n+1, M}$, $\bJ_1^{n+1, M}$, $\bJ_2^{n+1, M}$, ${\boldsymbol e}^{n+1, M}$, using the kinetic flux vector splitting, in a flux-difference form using the same second order SSP multi-step method in Step 2(b). 
  \item Compute $\bE^{n+1}$ from $\boldsymbol\rho^{n+1, M}$ by Poisson solver as in Step 2(a). 
  \item Compute $\boldsymbol\kappa^{n+1, M}$ by subtracting energy from the electrostatic field from ${\boldsymbol e}^{n+1, M}$. 
\item  Construct ${\bf f}^M_1$ from \eqref{eq: U12_htd_b}, but with $\boldsymbol\rho^{n+1, M}$, $\bJ_1^{n+1, M}$, $\bJ_2^{n+1, M}$, $\boldsymbol\kappa^{n+1, M}$. 
\end{enumerate}
  \item Update the compressed low-rank solution via \eqref{eq: Tc}, 
  $${\bf f}^{n+1} \doteq T^M_c({\bf f}) = {\bf f}^M_1 +(I-P_\mathcal{N})\left(\sqrt{\bf w}  \star \mathcal{T}_\varepsilon (\frac{1}{\sqrt{\bf w}} \star {\bf f}_2)\right).$$
\end{enumerate}
\end{enumerate}
  \end{algorithm}

\section{Numerical results}\label{sec:numerical}
In this section we present a collection of numerical examples to demonstrate the efficacy of the proposed LoMaC low rank tensor methods for simulating the VP system.  
In particular, besides the efficiency gain from the low rank representation of the solution shown in our previous work \cite{guo2022conservative}, we verify numerically the ability of the proposed method to conserve the total mass, momentum and energy up to the machine precision.  


\subsection{1D1V Vlasov-Poisson system}
\begin{exa}\label{ex:forced}(A forced VP system \cite{de2012high}.) In this example, we consider the VP system with a forcing term and periodic conditions in $x$- direction $x\in[-\pi,\pi]$
\begin{align*}
\frac{\partial f}{\partial t} + vf_x + Ef_v &= \psi(x,v,t),\\
E(x,t)_x &= \rho(x,t) - \sqrt{\pi},
\end{align*}
where $\psi$ is defined as
$$
\psi(x, v, t)=\left(\left(\left(4 \sqrt{\pi}+2\right) v-\left(2 \pi+\sqrt{\pi}\right)\right) \sin (2 x-2 \pi t)
+\sqrt{\pi}(\frac14-v) \sin (4 x-4 \pi t)\right)\exp\left(-\frac{(4 v-1)^{2}}{4}\right) 
$$
so that the system has the exact solution
$$
\begin{aligned}
f(x, v, t) &=\left(2-\cos (2 x-2 \pi t)\right) \exp\left(-\frac{(4 v-1)^{2}}{4}\right), \\
E(x, t) &=\frac{\sqrt{\pi}}{4} \sin (2 x-2 \pi t).
\end{aligned}
$$
Note that the forced system satisfies the following the macroscopic system
\begin{align*}
\partial_{t} \rho + \bJ_x &= \frac{\sqrt{\pi}}{4}(1-4\pi)\sin(2x-2\pi t)\\
\partial_{t} \bJ +\mathbf{\sigma}_x&= \rho E + \frac{\sqrt{\pi}}{16}(3+ 4\sqrt{\pi} -4 \pi)\sin(2x-2\pi t) -\frac{\pi}{16}\sin(4x-4\pi t)\\
\partial_{t} e + \mathbf{Q}_x& = \frac{\sqrt{\pi}}{128}(7 + 8\sqrt{\pi}-12\pi)\sin(2x-2\pi t)-\frac{\pi}{64}\sin(4x-4\pi t) \\
& + \frac{\sqrt{\pi}}{8}\left( 2  - (1-4\pi)\cos(2x-2\pi t) \right)E,
\end{align*}
and conserves the total mass, total momentum, and total energy. Moreover, the exact solution is known and remains rank one over time. Hence, we will make use of this example to demonstrate the accuracy, efficiency as well as the ability of the proposed LoMaC low rank method to conserve the physical invariants. In the simulation, we set the truncation threshold $\varepsilon=10^{-4}$ and set the computational domain $[-\pi,\pi]\times[-L_v,L_v]$ with $L_v=4$. We compute the problem with one period to $t=1$ and summarize the convergence study in Table \ref{tb:forced}. Second order of convergence in the $L^\infty$ and $L^2$ errors is observed due to the second order SSP multi-step method used. In Figure \ref{fig:forced1d_invar}, we report the time histories numerical ranks of the low rank solutions, relative deviation of the total mass, total momentum and total energy. It is observed that the ranks of the numerical solutions stay four over time, and it is because $\mathbf{f}_1$ is of rank three to conserve locally the mass, momentum and kinetic energy densities,  and the truncated $\widetilde{\mT_\varepsilon}({\bf f}_2)$ stays rank one. Furthermore, the total mass, momentum and energy are conserved up to the machine precision. 

\begin{table}[!hbp]
	\centering
	\caption{Example \ref{ex:forced}. $t=1$. Convergence study.}
	\label{tb:forced}
	\begin{tabular}{|c|c|c|c|c|}
		\hline
		 $N$ & $L^\infty$ error & order & $L^2$ error & order \\\hline
32	&	3.39E-03&	 --      &2.28E-03& --		\\\hline
64	&	4.07E-04&	3.06&	2.97E-04&	2.94	\\\hline
128	&	9.83E-05&	2.05&	7.13E-05&	2.06	\\\hline
256	&	2.46E-05&	2.00&	1.85E-05&	1.95	\\\hline
	\end{tabular}
\end{table}

\begin{figure}[h!]
	\centering
		\subfigure[]{\includegraphics[height=50mm]{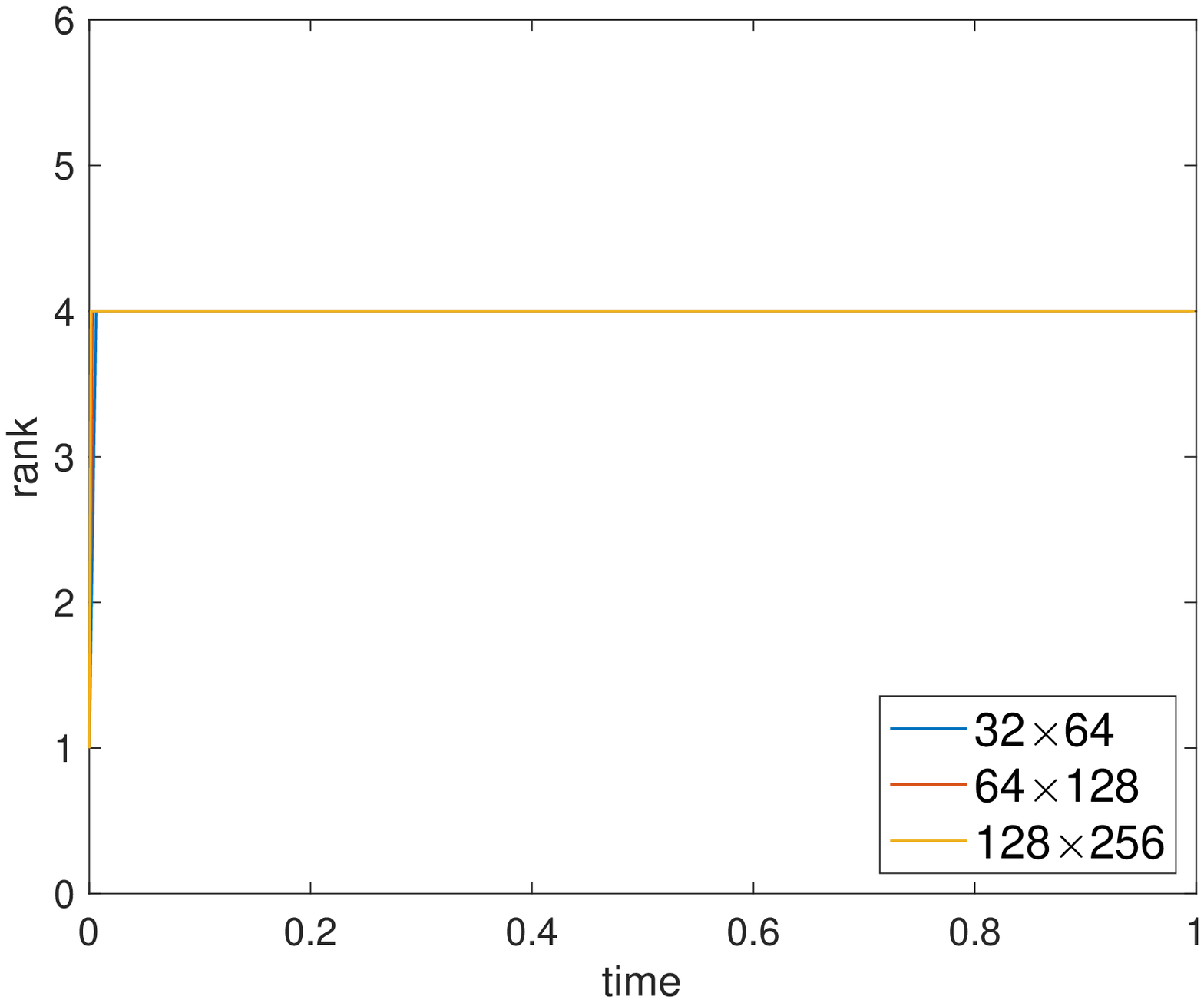}}
		\subfigure[]{\includegraphics[height=50mm]{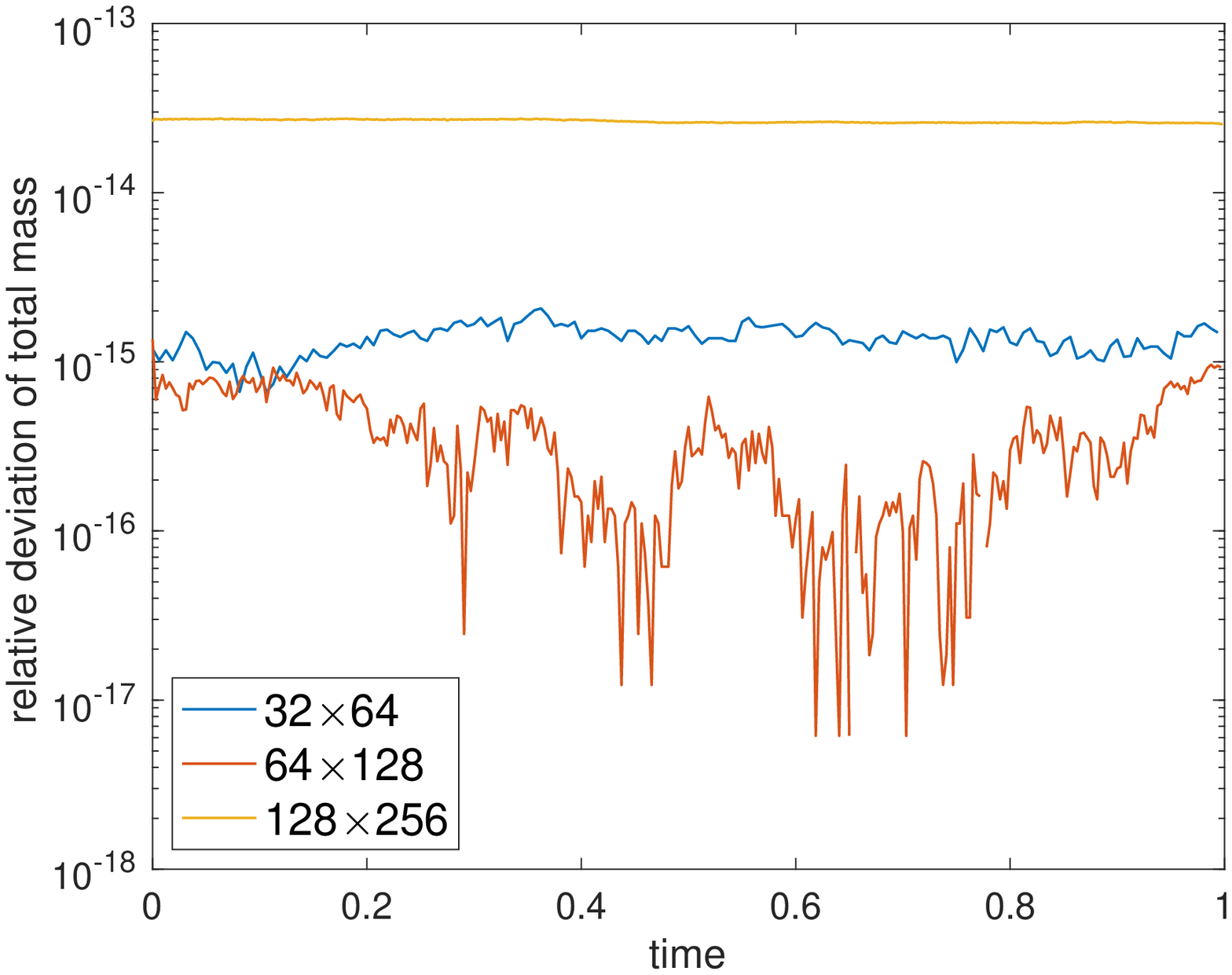}}
	       \subfigure[]{\includegraphics[height=50mm]{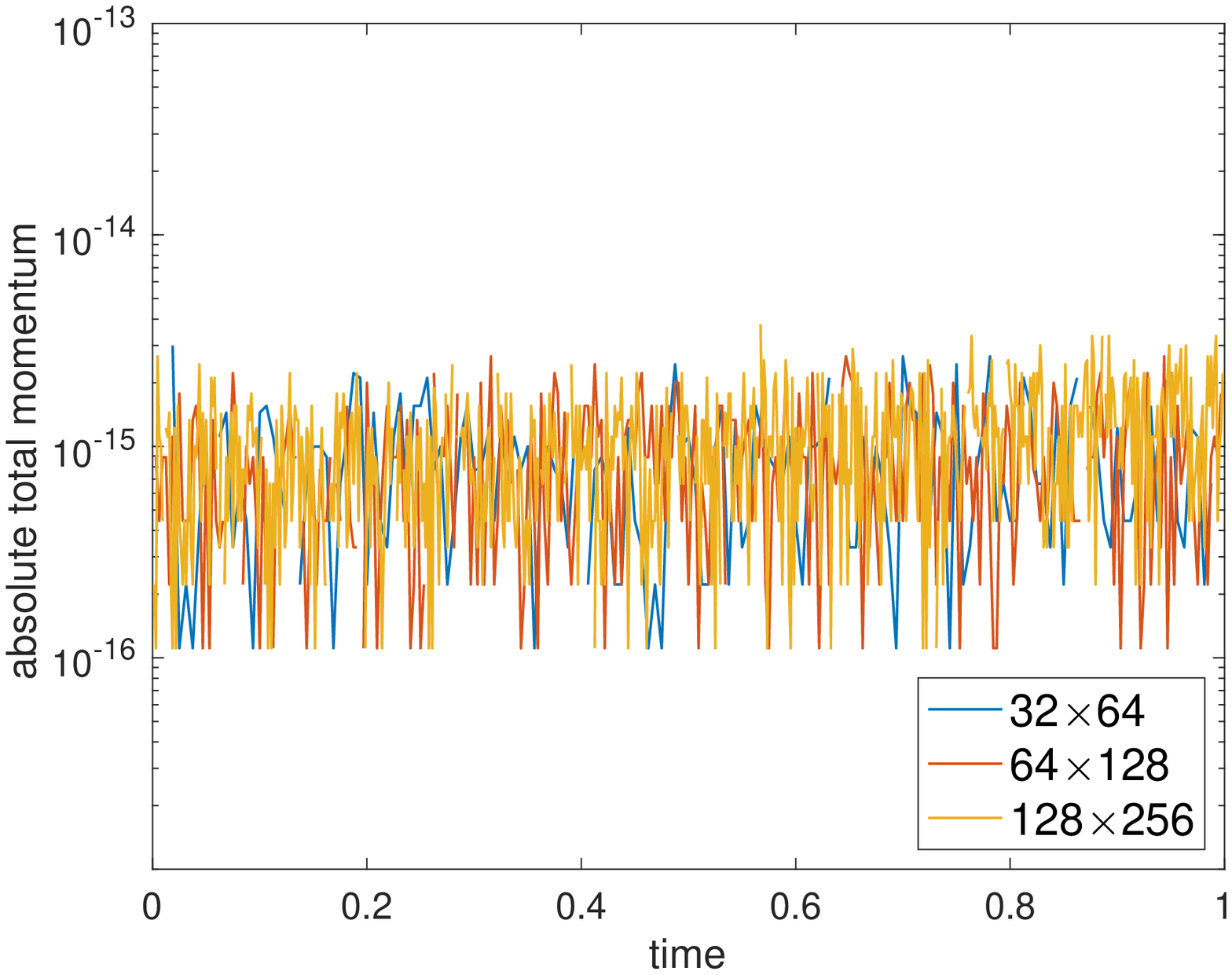}}
		\subfigure[]{\includegraphics[height=50mm]{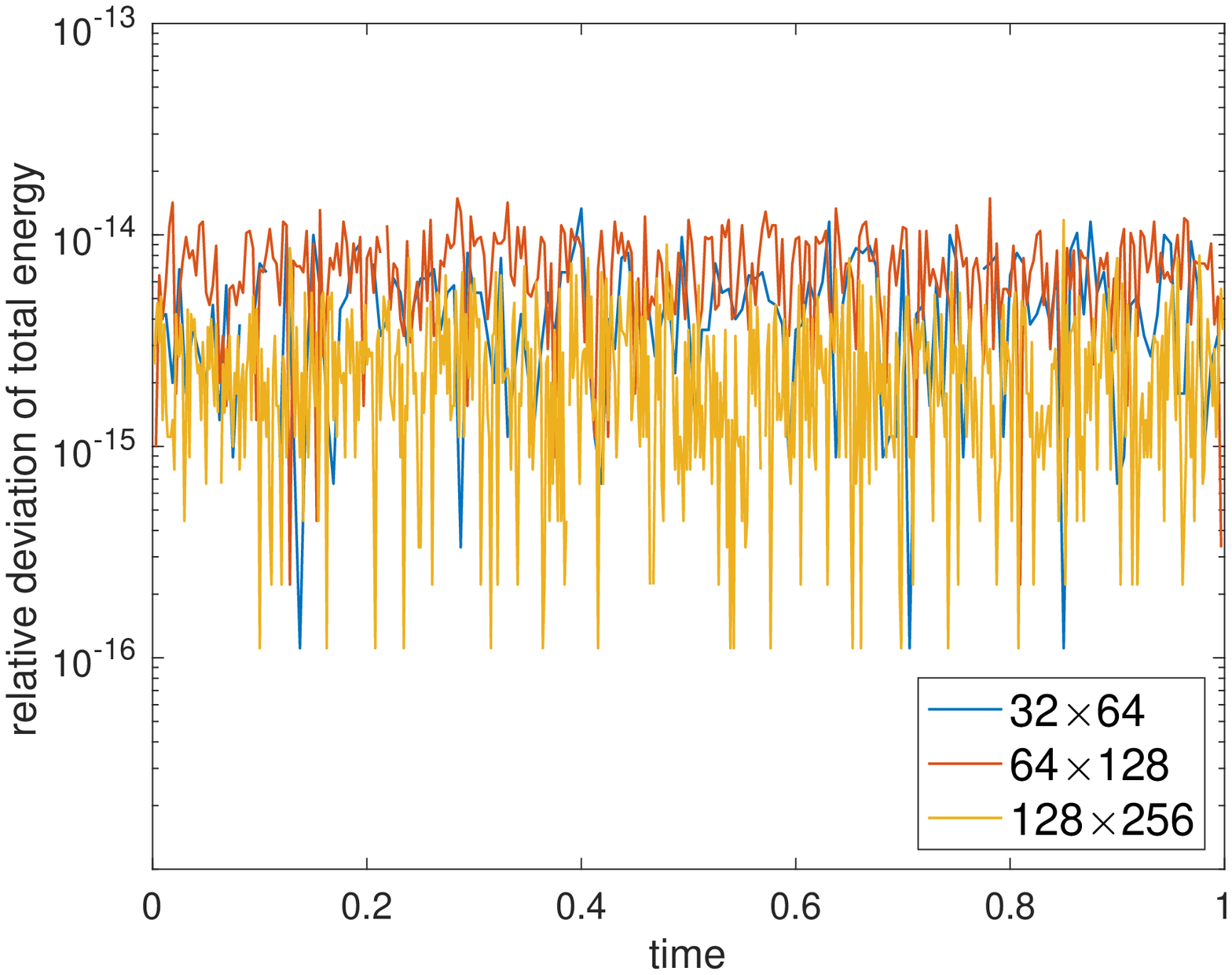}}
	\caption{Example \ref{ex:forced}.  The time evolution of  ranks of the numerical solutions (a), relative deviation of total mass (b),  total momentum (c), and total energy (d). $\varepsilon=10^{-4}$.}
	\label{fig:forced1d_invar}
\end{figure}	

\end{exa}

\begin{exa}
	\label{ex:weak1d}(Weak Landau damping.) We consider the weak Landau damping test with initial condition 
	\begin{equation}
		\label{eq:landau1d}
		f(x,v,t=0) =\frac{1}{\sqrt{2 \pi}} \left(1+\alpha  \cos \left(k x\right)\right)\exp\left(-\frac{v^2}{2}\right),
	\end{equation}
	where $\alpha=0.01$ and $k=0.5$. The computational domain is set to be $[0,L_x]\times[-L_v,L_v]$ with $L_x=2\pi/k$ and $L_v=6$. We set $\varepsilon=10^{-5}$ for truncation. In Figure \ref{fig:weak1d_invar}, we report the simulation results from the proposed LoMaC low rank method, including the time histories of the electric energy, numerical ranks of the low rank solutions, relative deviation of the total mass, momentum and energy. It is observed that the method is able to predict the correct damping rate of the electric energy.  Furthermore, the method is able to conserve the total mass, momentum and energy up to the machine precision regardless of the mesh size used.

\begin{figure}[h!]
	\centering
	\subfigure[]{\includegraphics[height=40mm]{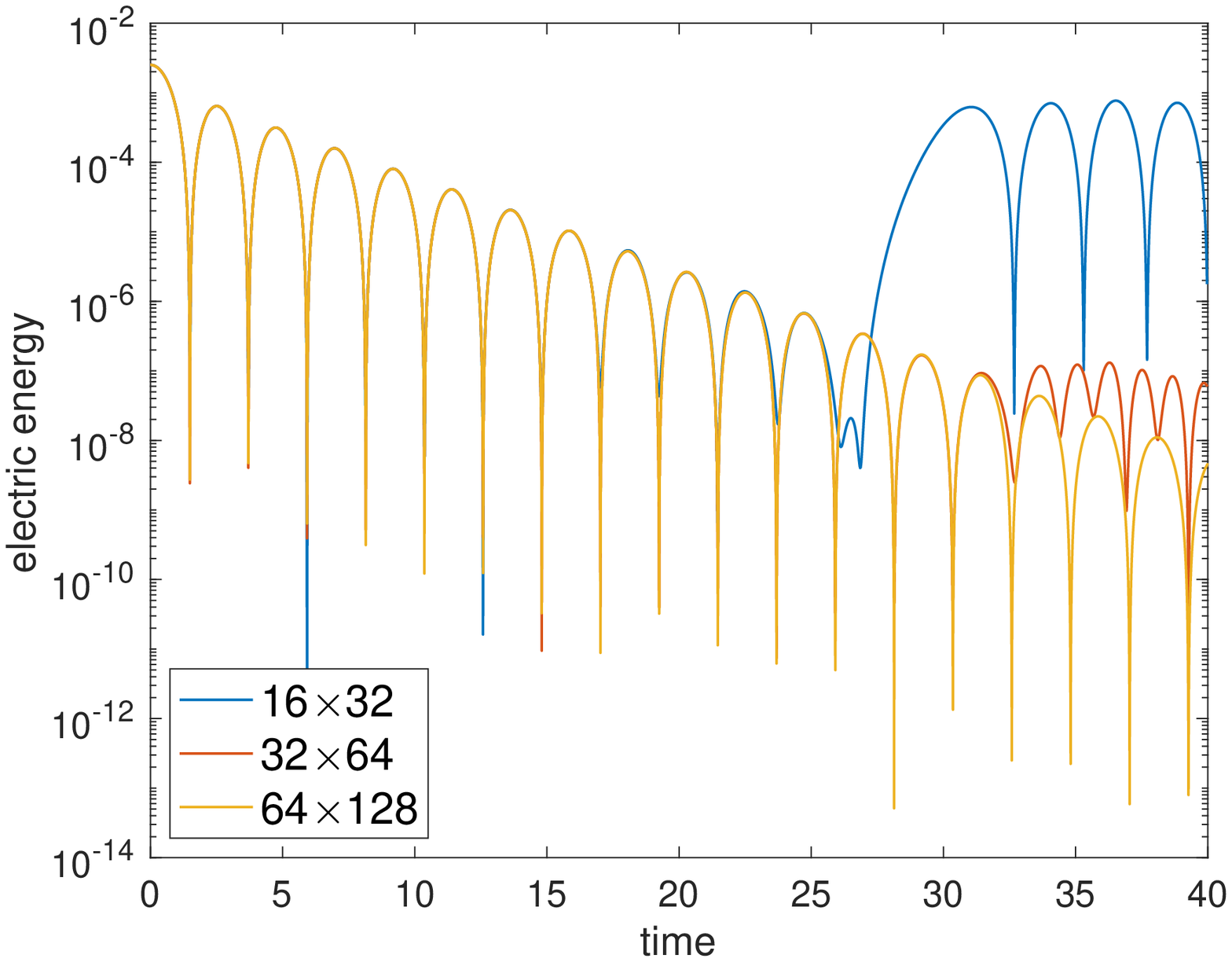}}
		\subfigure[]{\includegraphics[height=40mm]{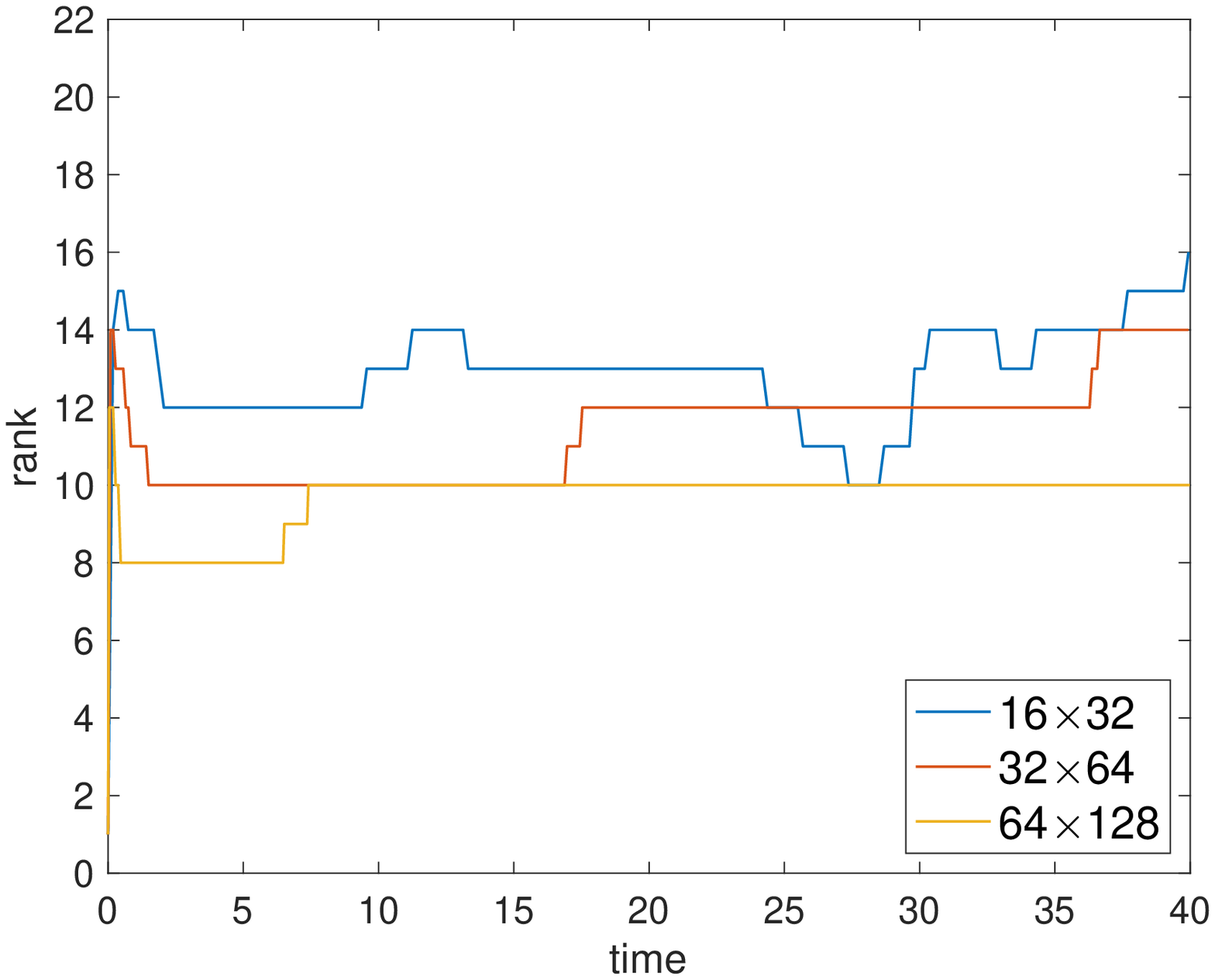}}
		\subfigure[]{\includegraphics[height=40mm]{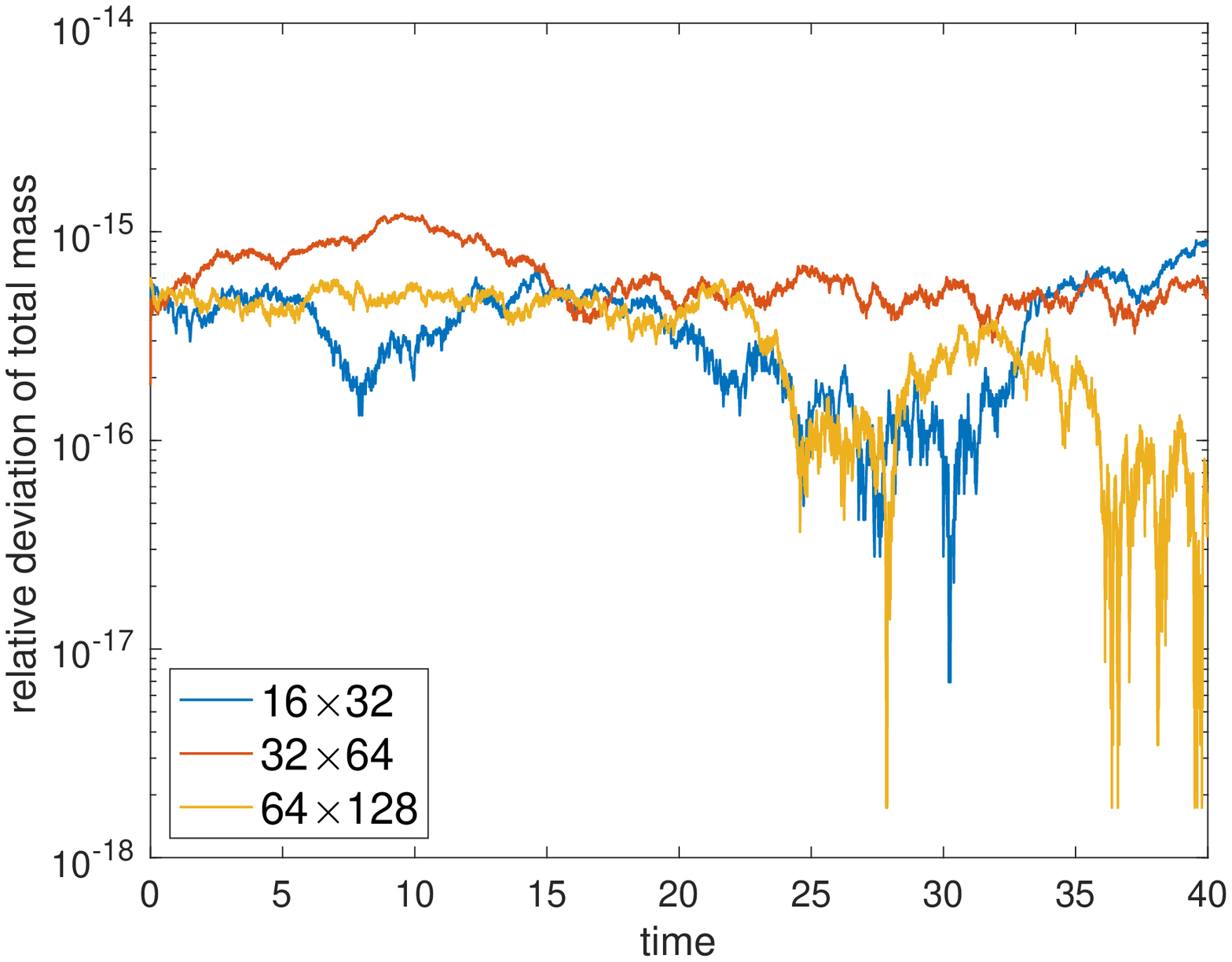}}
	       \subfigure[]{\includegraphics[height=40mm]{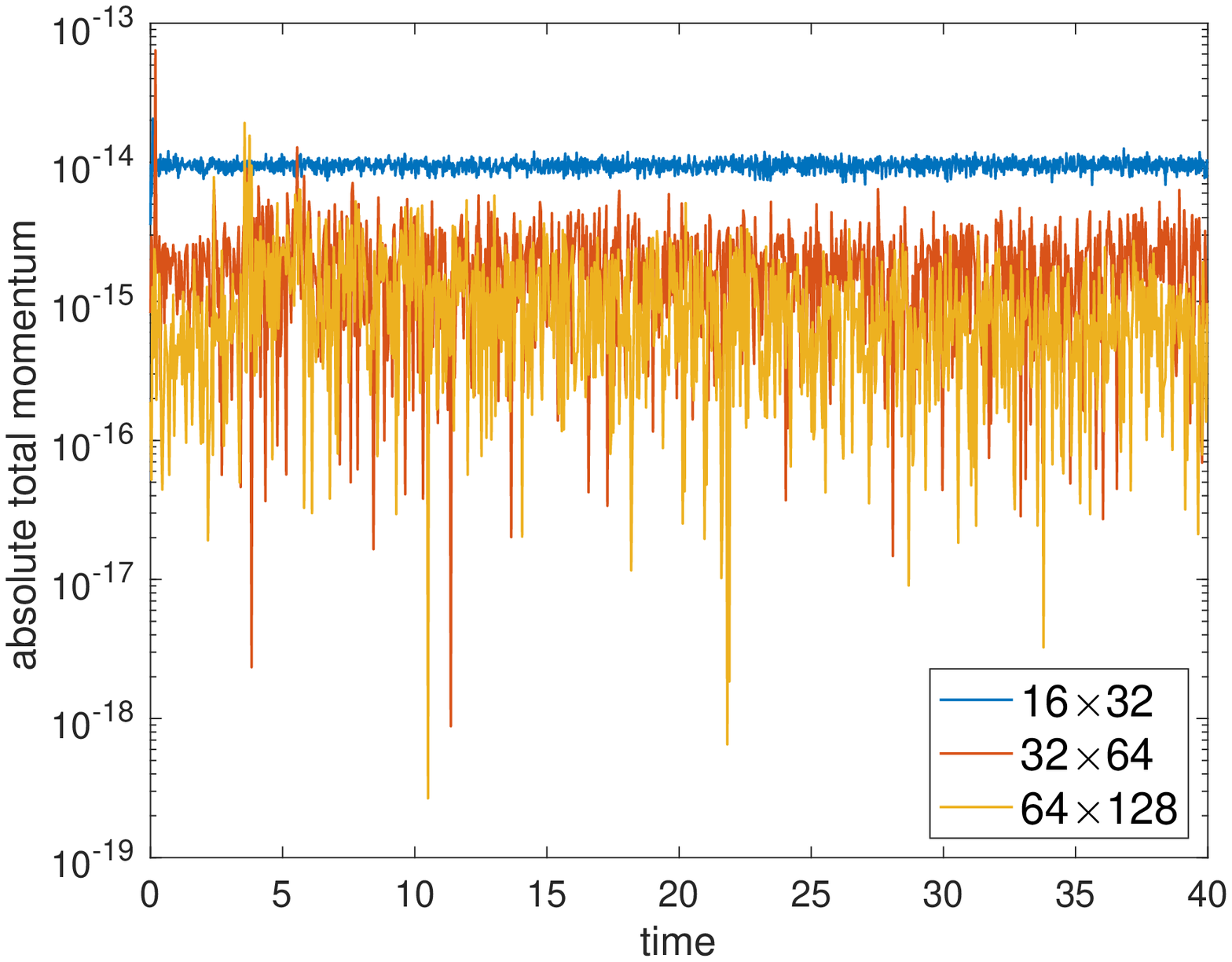}}
		\subfigure[]{\includegraphics[height=40mm]{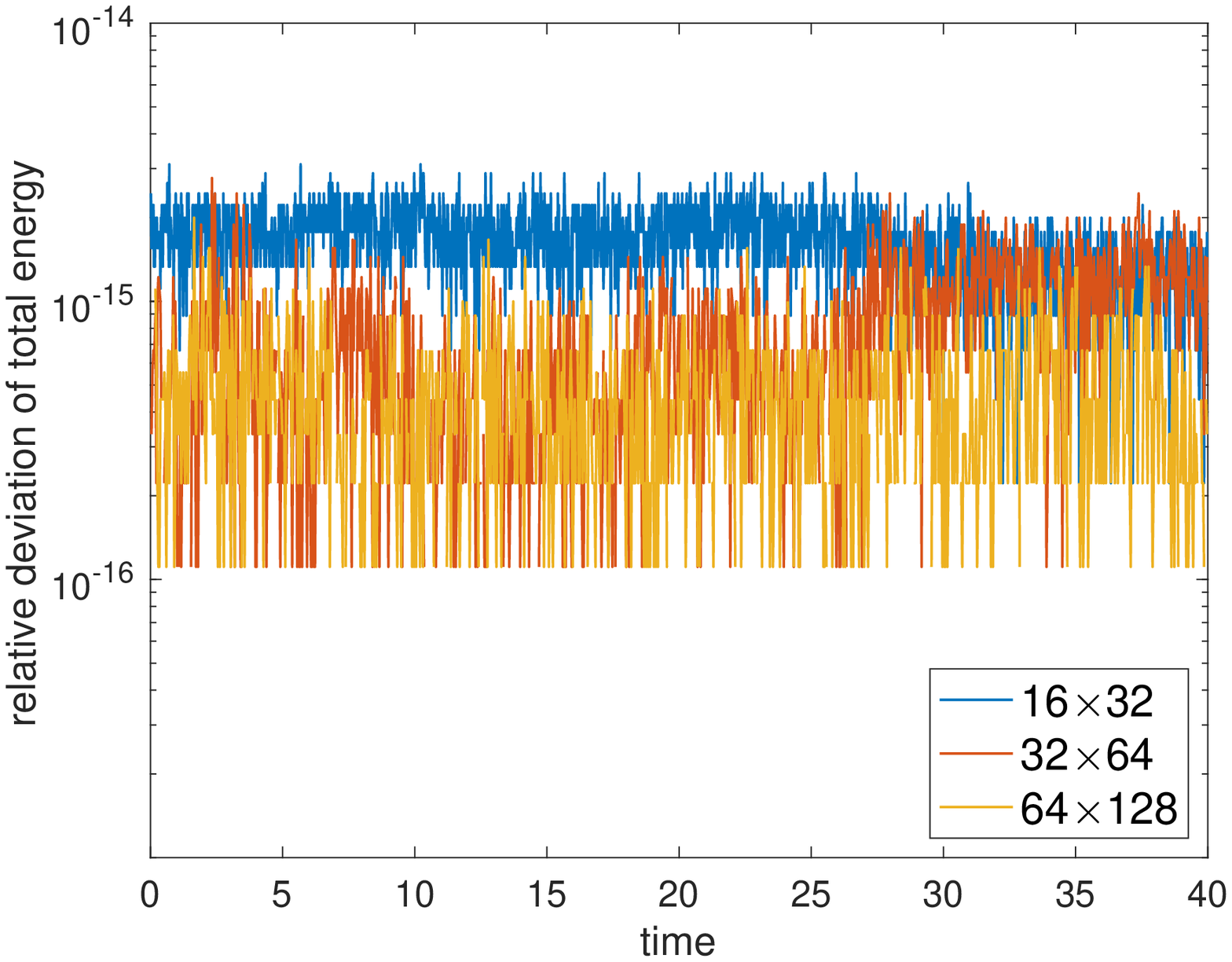}}
	\caption{Example \ref{ex:weak1d}.  The time evolution of  electric energy (a), ranks of the numerical solutions (b), relative deviation of total mass (c), absolute total momentum (e), and relative deviation of total energy (f). $\varepsilon=10^{-5}$.}
	\label{fig:weak1d_invar}
\end{figure}	
	
\end{exa}

\begin{exa}
	\label{ex:strong1d}(Strong Landau damping.) We consider the strong Landau damping test, the initial condition of which is the same as \eqref{eq:landau1d} but with parameters
$\alpha=0.5$ and $k=0.5$. The computational domain is set to be $[0,L_x]\times[-L_v,L_v]$ with $L_x=2\pi/k$ and $L_v=6$, and the truncation threshold is set to be $\varepsilon=10^{-3}$. Unlike the weak case, the dynamics of strong Landau damping cannot be predicted by the linear theory, as the nonlinear effect will dominate due to the large perturbation. We summarize the simulation results in Figure \ref{fig:strong1d_invar}. It is observed that the proposed method is able to capture the dynamics of the electric energy and conserve the physical invariants as expected up to machine precision.

\begin{figure}[h!]
	\centering
	\subfigure[]{\includegraphics[height=40mm]{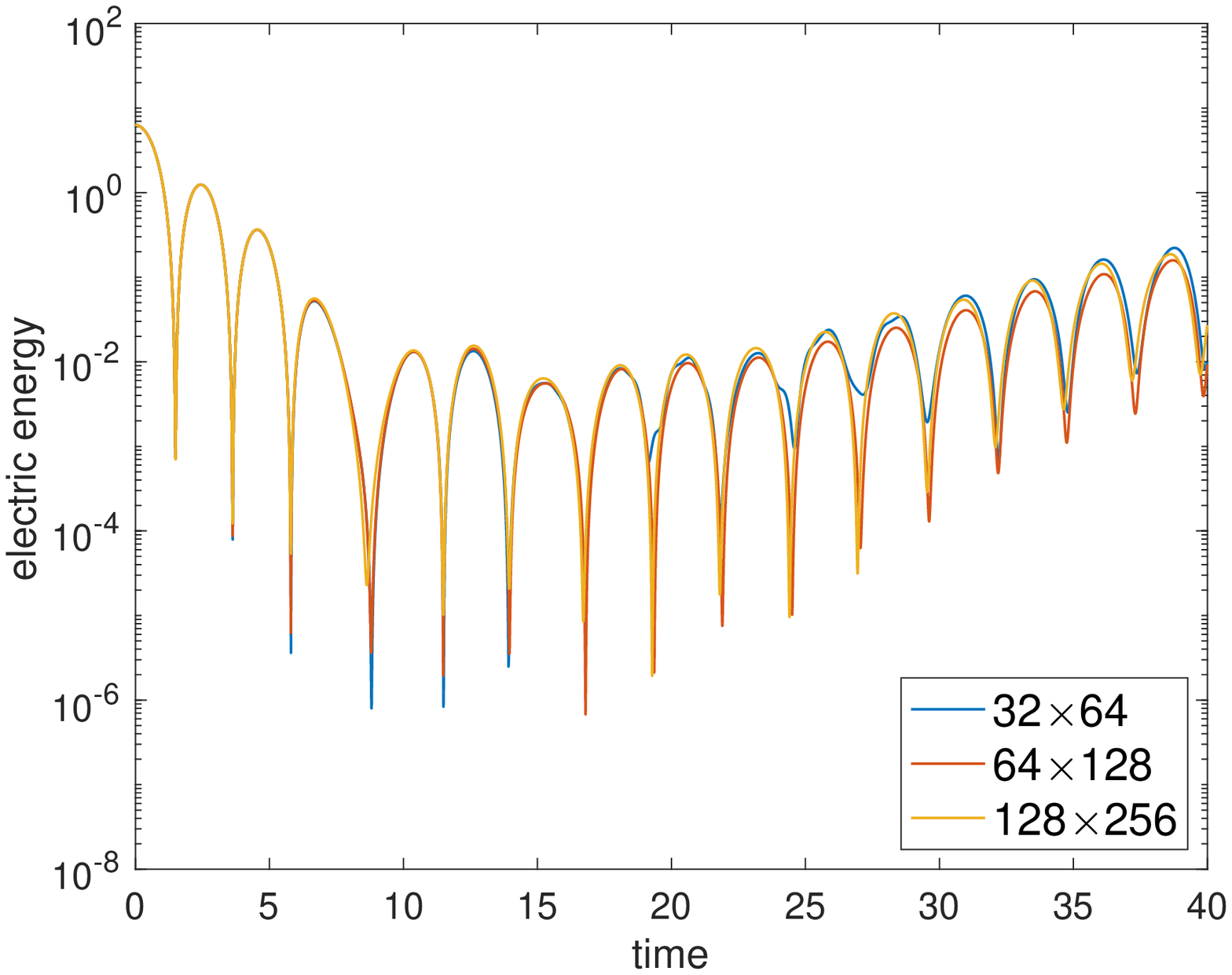}}
		\subfigure[]{\includegraphics[height=40mm]{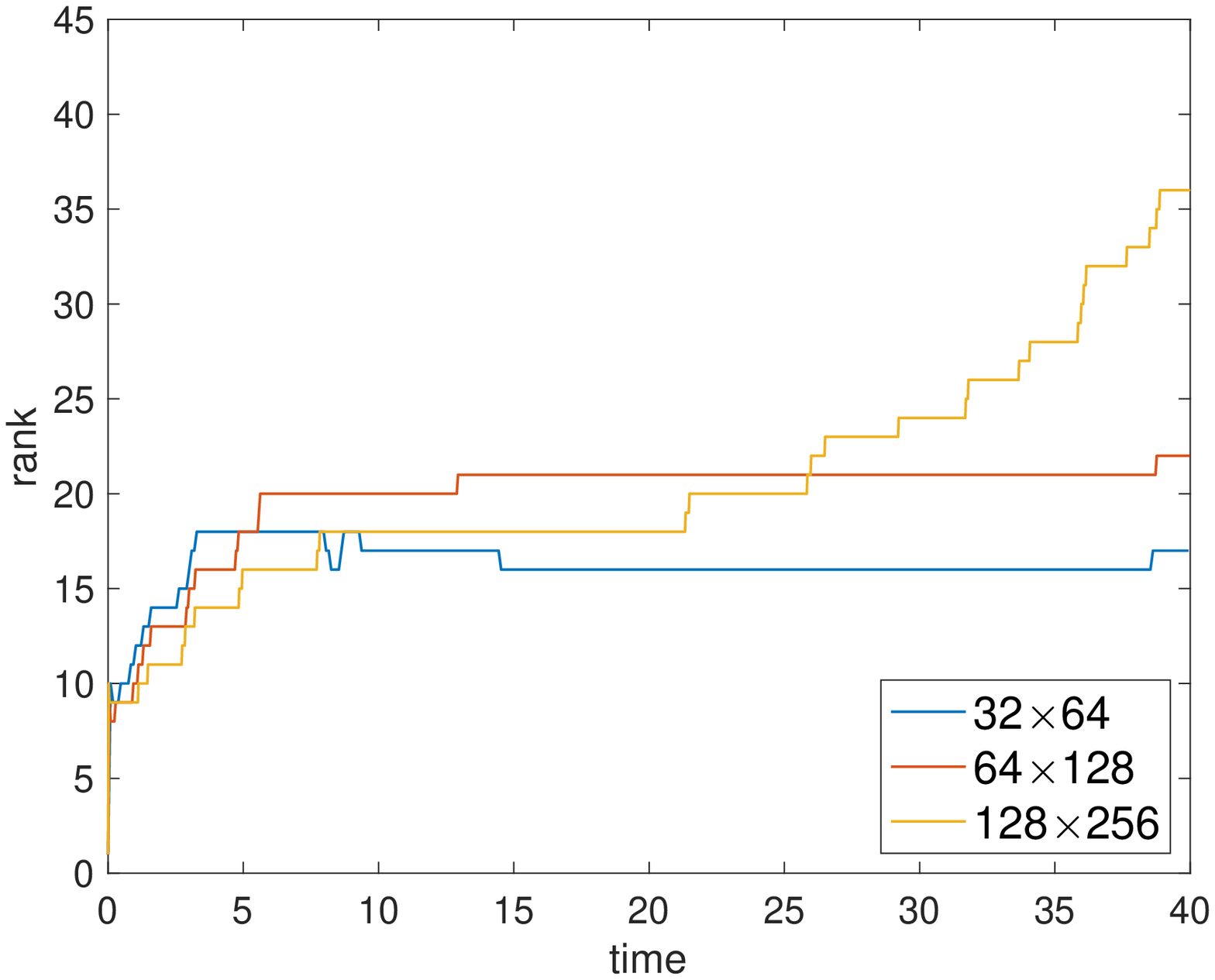}}
		\subfigure[]{\includegraphics[height=40mm]{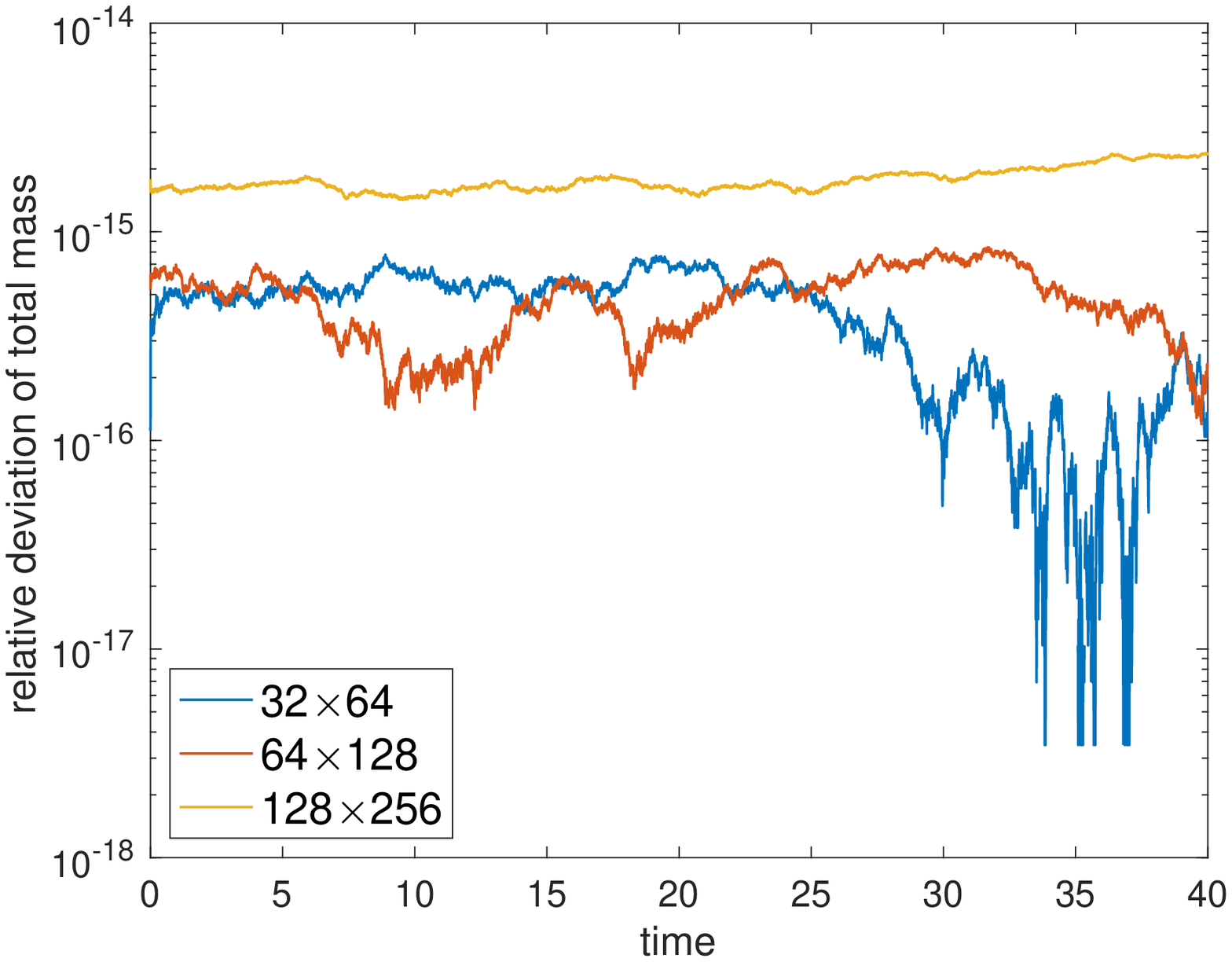}}
	       \subfigure[]{\includegraphics[height=40mm]{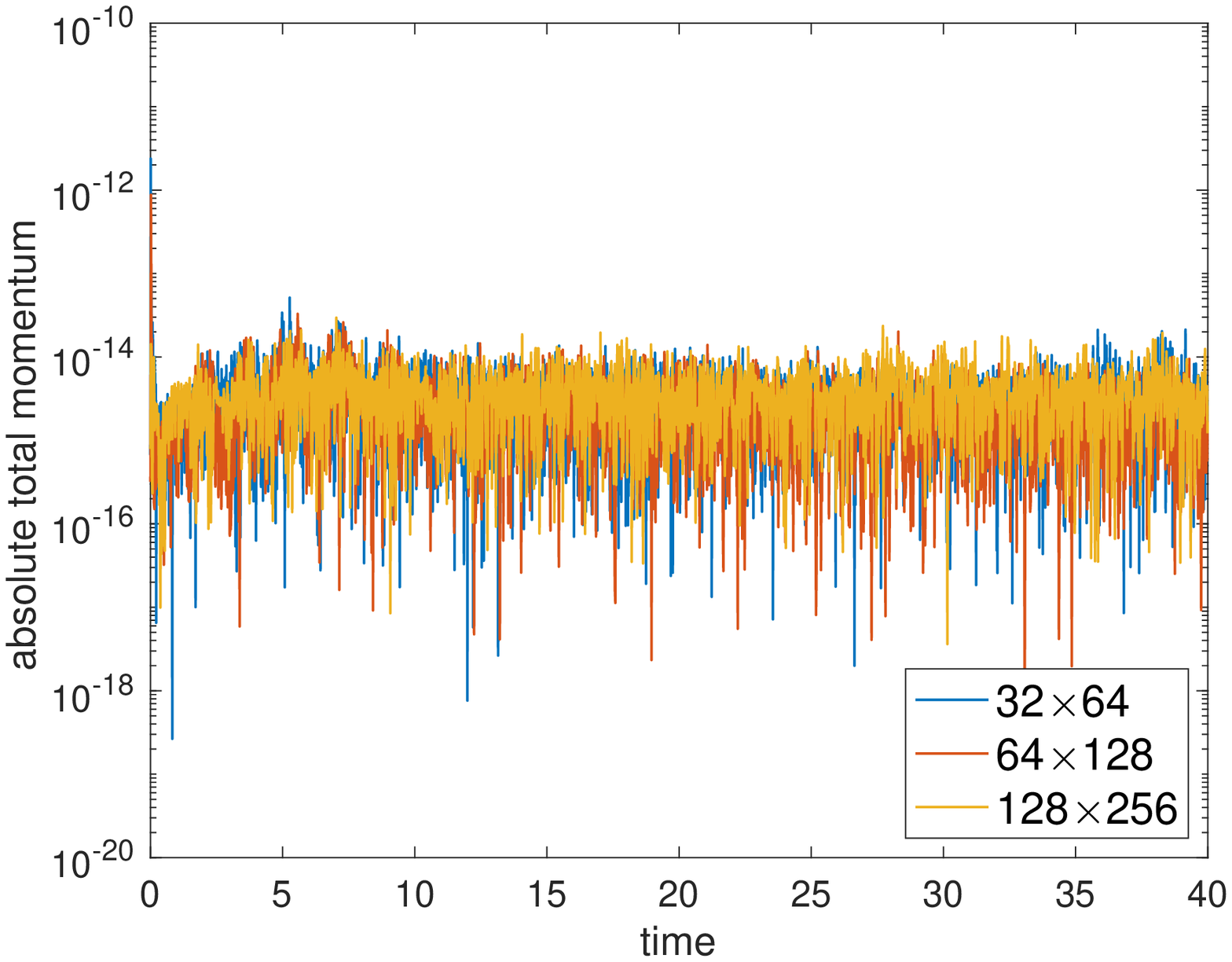}}
		\subfigure[]{\includegraphics[height=40mm]{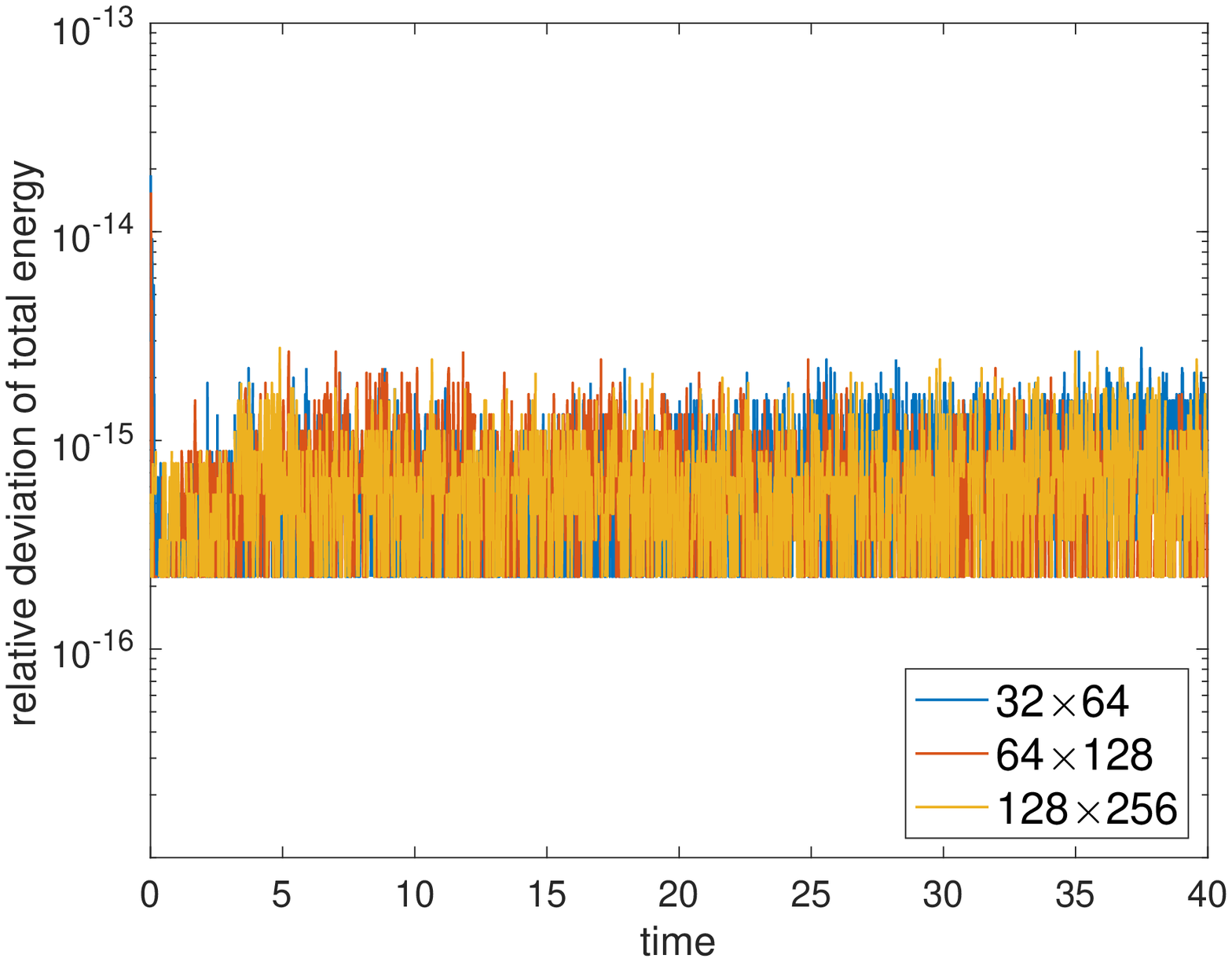}}
	\caption{Example \ref{ex:strong1d}.  The time evolution of  electric energy (a), ranks of the numerical solutions (b), relative deviation of total mass (c), absolute total momentum (e), and relative deviation of total energy (f).  $\varepsilon=10^{-3}$.}
	\label{fig:strong1d_invar}
\end{figure}	
	
\end{exa}

 \begin{exa}\label{ex:bumpontail} (Bump on tail.) In this example, we simulate the bump-on-tail test with the initial condition
 \begin{equation}
\label{eq:bump1d}
f(x,v,t=0) = \left(1+\alpha  \cos \left(k x\right)\right)\left(n_p\exp\left(-\frac{v^2}{2}\right) +n_b\exp\left(-\frac{(v-u)^2}{2v_{t}}\right) \right),
\end{equation}
where $\alpha=0.04$, $k=0.3$, $n_{p}=\frac{9}{10 \sqrt{2 \pi}}$, $n_{b}=\frac{2}{10 \sqrt{2 \pi}}$, $u=4.5$, $v_{t}=0.5$. The weight function $w(v) = \exp(-\frac{v^2}{3})$ is chosen. We compare the performance of the three low rank methods including the non-conservative method in \cite{guo2021lowrank}, the conservative method in \cite{guo2022lowrank}, and the proposed LoMaC low rank method. In the simulations, we set the mesh size as $N_x\times N_v=128\times256$ and the truncation threshold as $\varepsilon=10^{-4}$. The results are plotted in Figure \ref{fig:bump_com}. It is observed that all three methods generate numerical solutions with consistent electric energy evolution histories and comparable numerical ranks.  Furthermore, it is found that 
the non-conservative method can preserve the total mass, total momentum, and total energy up to the truncation threshold $\varepsilon=10^{-4}$, the conservative method can conserve the total mass and total momentum on the scale of $10^{-12}$ but not the total energy, and the proposed LoMaC low rank method can conserve the total mass, total momentum, and total energy on the scale of $10^{-14}$. In Figure \ref{fig:bumpontail_contour}, we report the contour plots of the solutions by the three methods. Meanwhile, we notice that results by the conservative method and the proposed method are more consistent, which is partly because of their excellent conservation properties.  

\begin{figure}[h!]
	\centering
	\subfigure[]{\includegraphics[height=40mm]{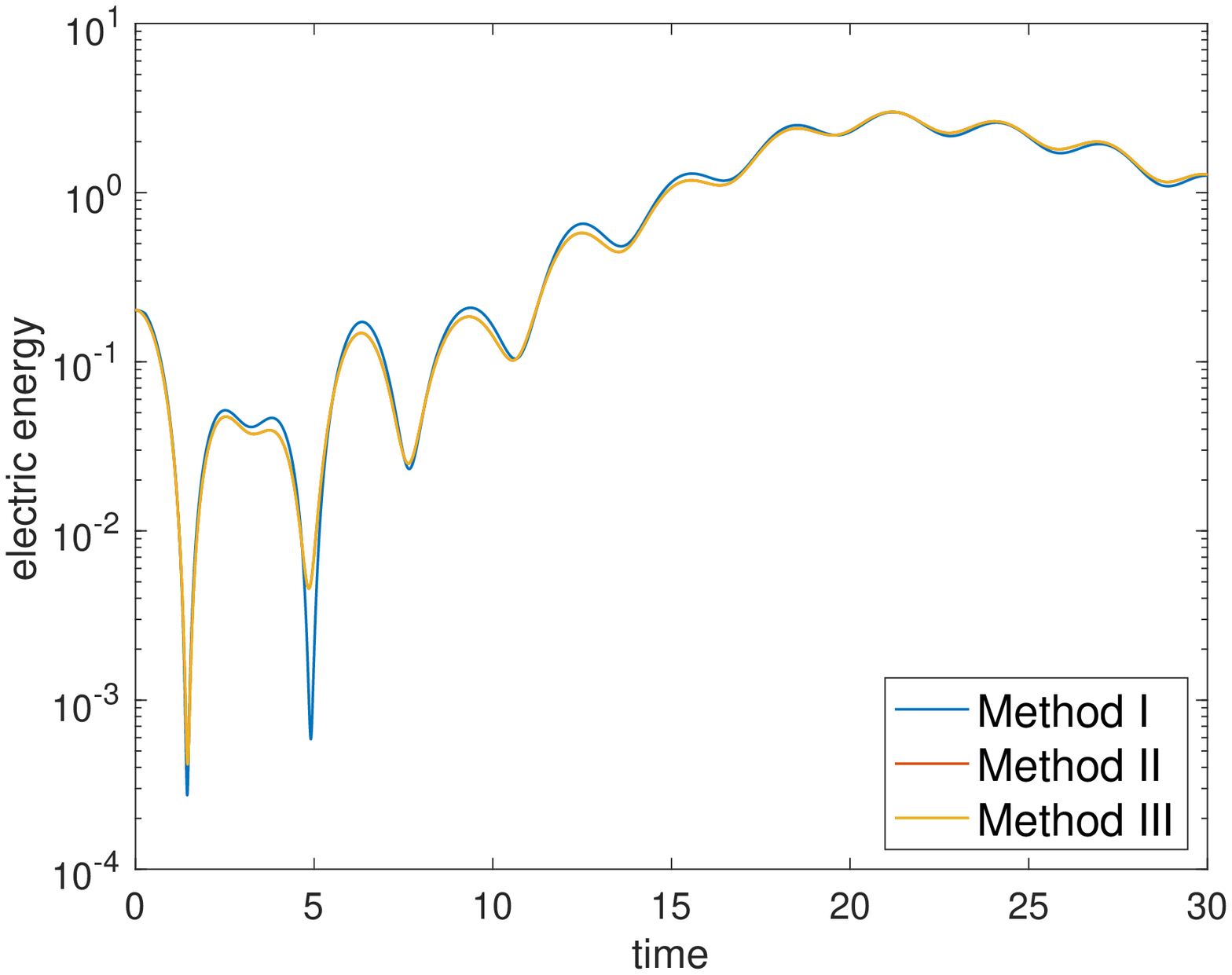}}
		\subfigure[]{\includegraphics[height=40mm]{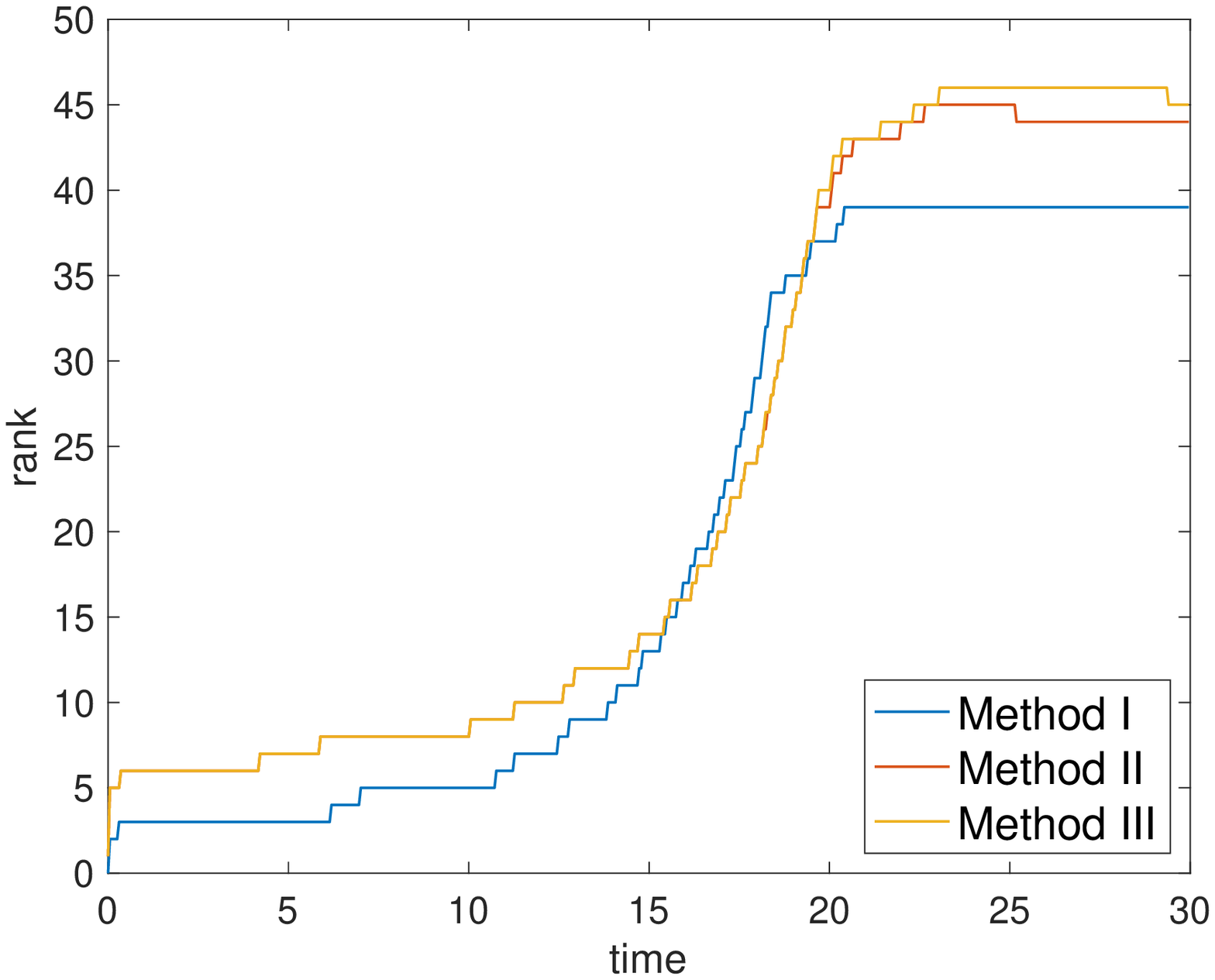}}
		\subfigure[]{\includegraphics[height=40mm]{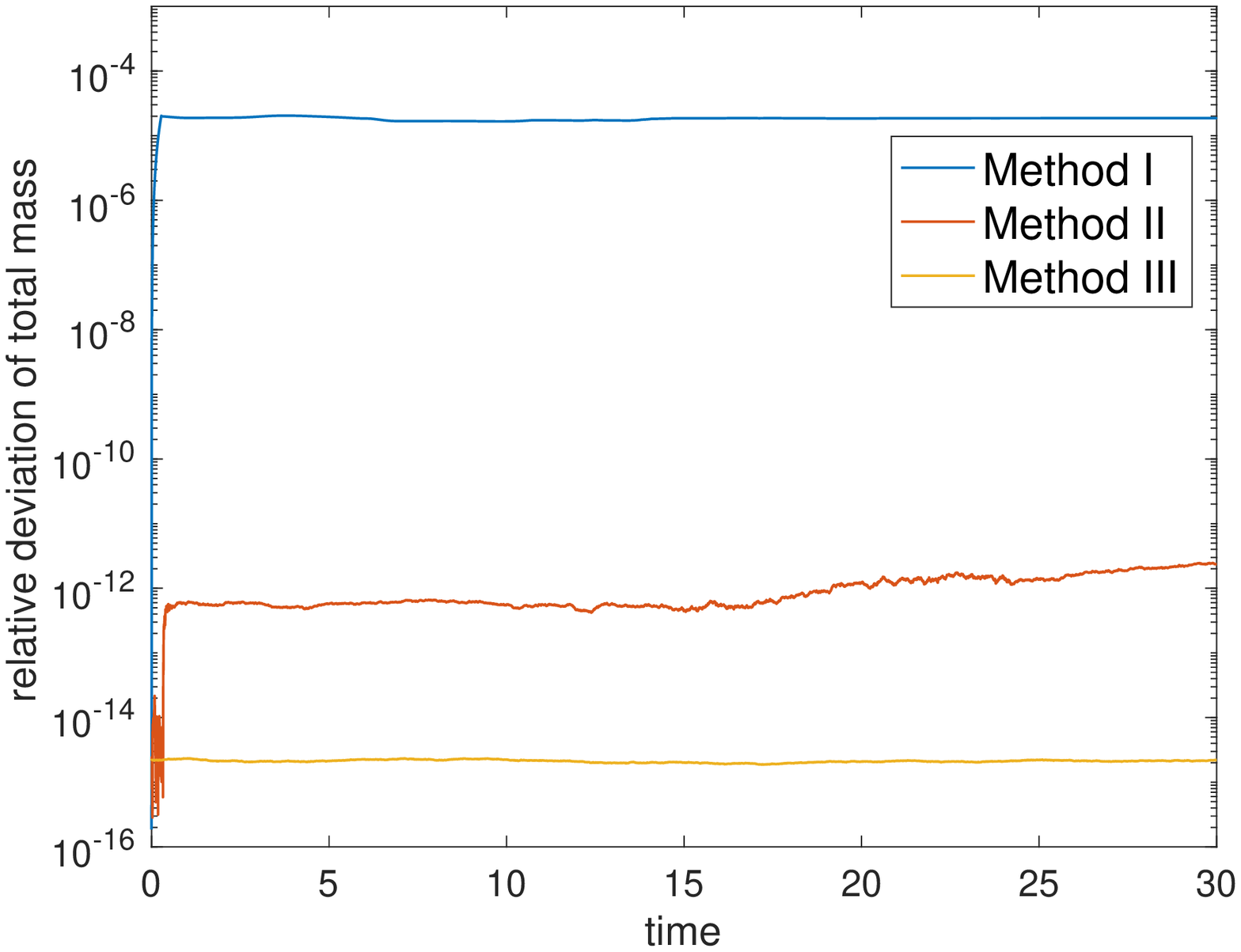}}
	       \subfigure[]{\includegraphics[height=40mm]{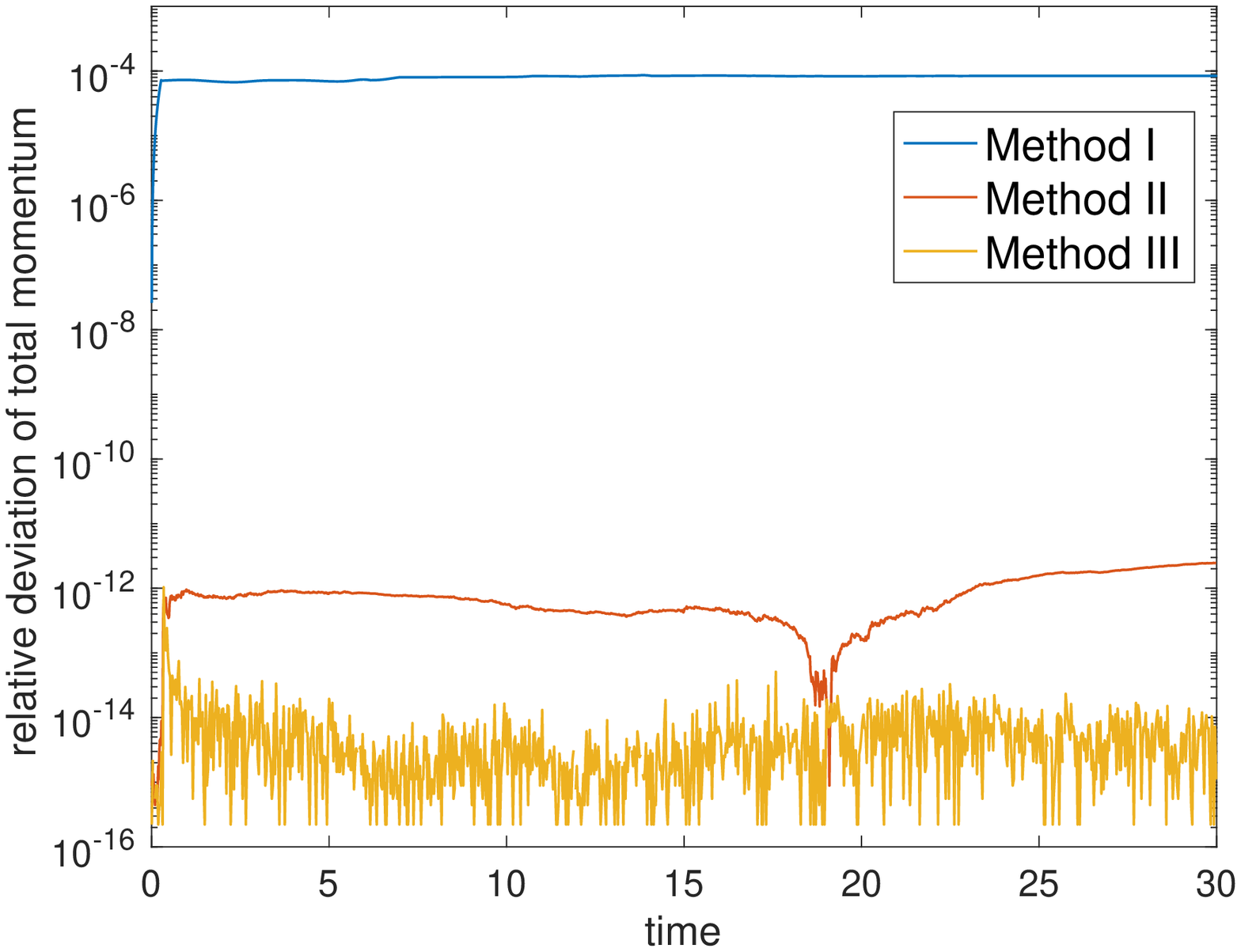}}
		\subfigure[]{\includegraphics[height=40mm]{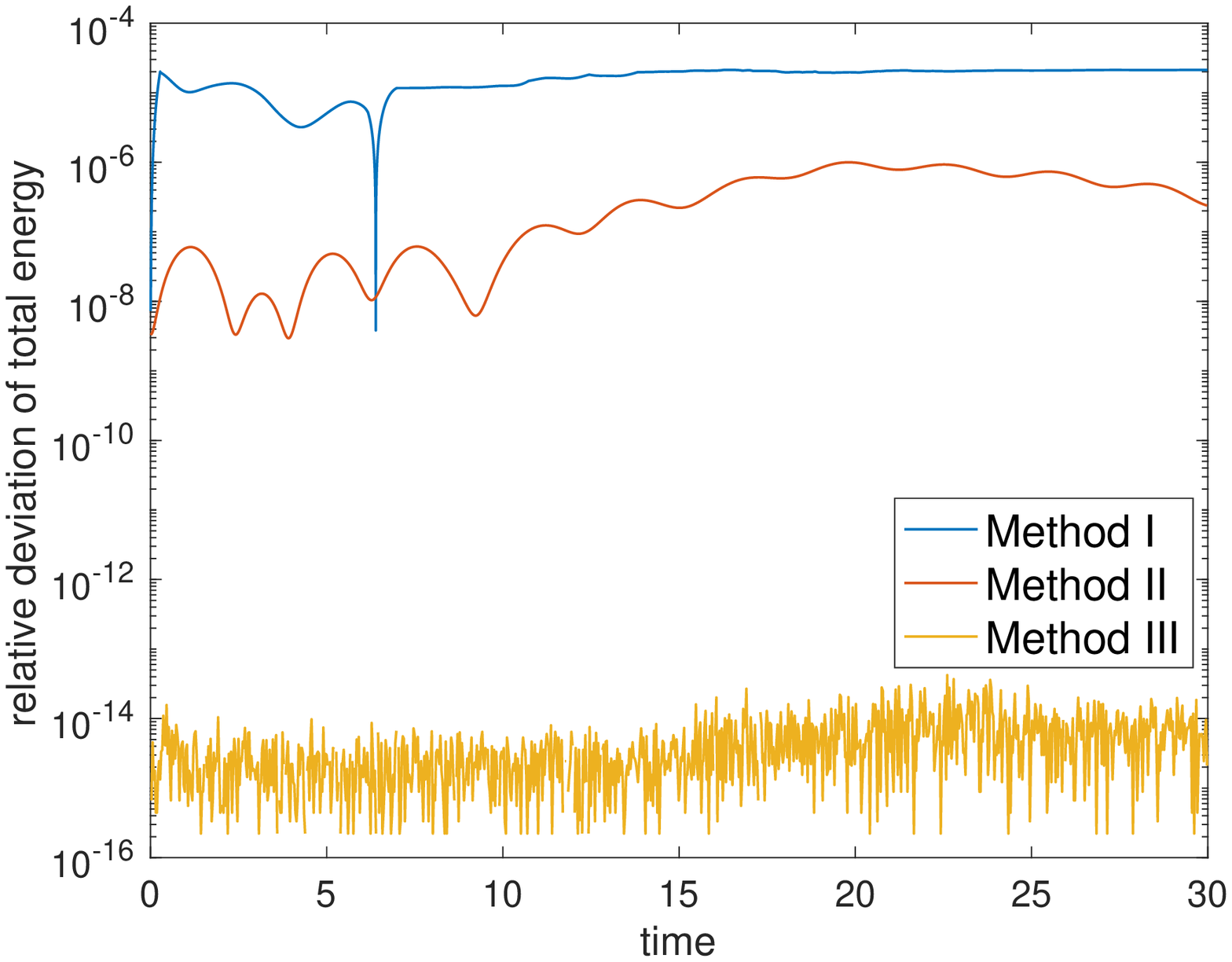}}
	\caption{Example \ref{ex:bumpontail}. Comparison of three low rank methods including the non-conservative low rank method in \cite{guo2021lowrank} denoted by method I, the conservative method in \cite{guo2022lowrank} denoted by method II and the proposed LoMaC low rank method denoted by method III.  The time evolution of  electric energy (a), ranks of the numerical solutions (b), relative deviation of total mass (c), absolute total momentum (e), and relative deviation of total energy (f). $N_x\times N_v = 128\times256$. $\varepsilon=10^{-4}$.}
	\label{fig:bump_com}
\end{figure}	

 \begin{figure}[h!]
	\centering
       \subfigure[Method I]{\includegraphics[height=40mm]{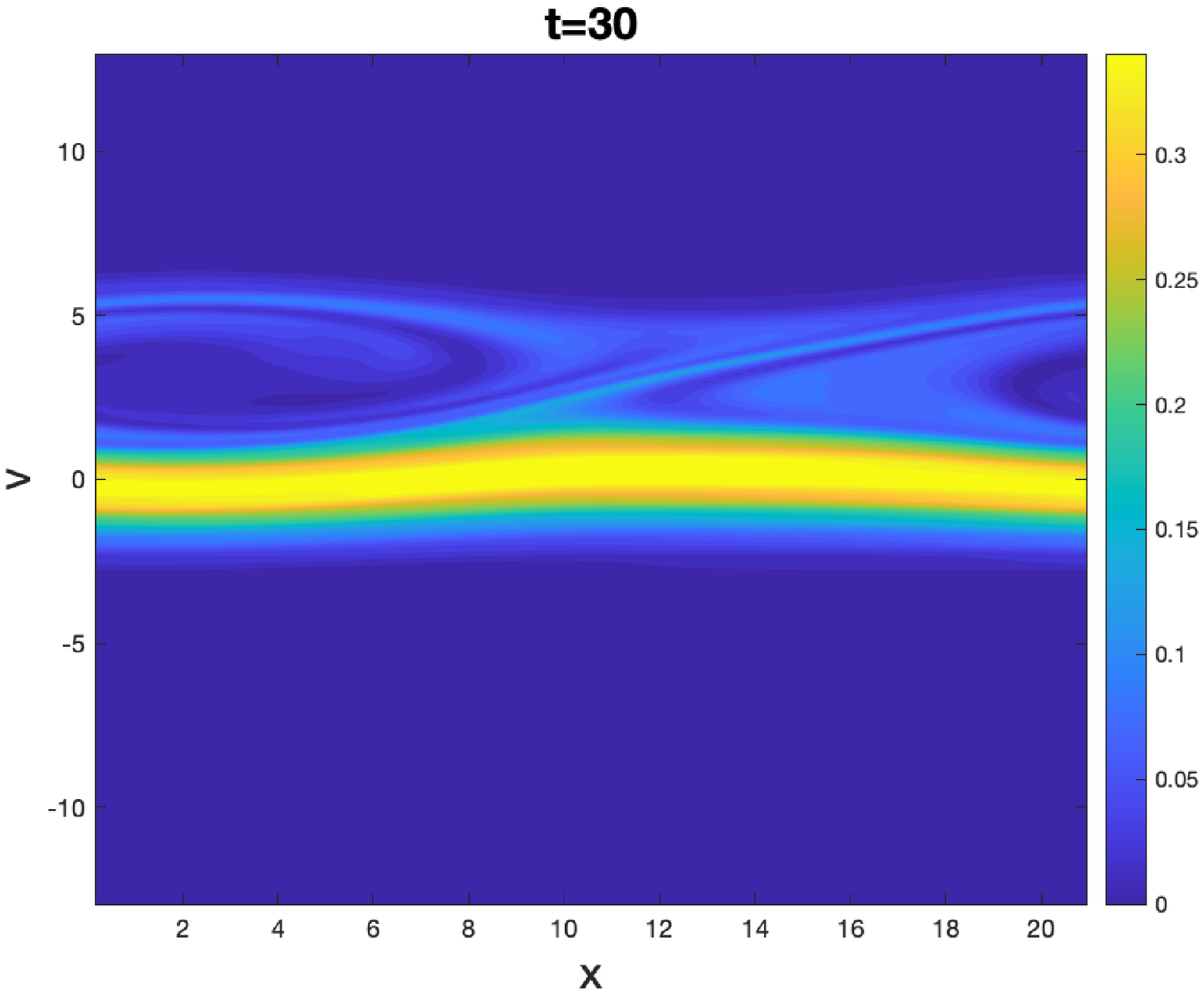}}
	\subfigure[Method II]{\includegraphics[height=40mm]{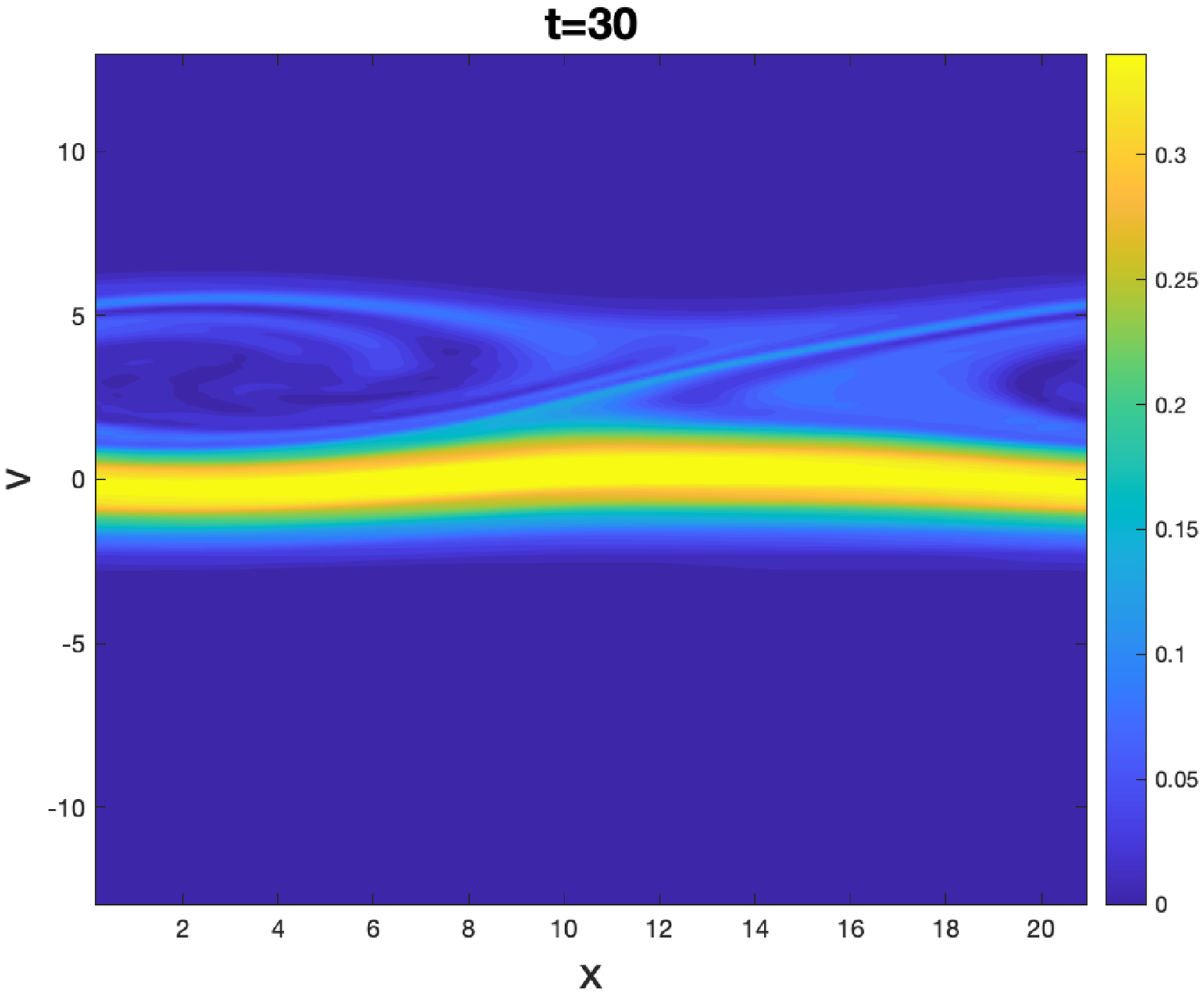}}
	\subfigure[Method III]{\includegraphics[height=40mm]{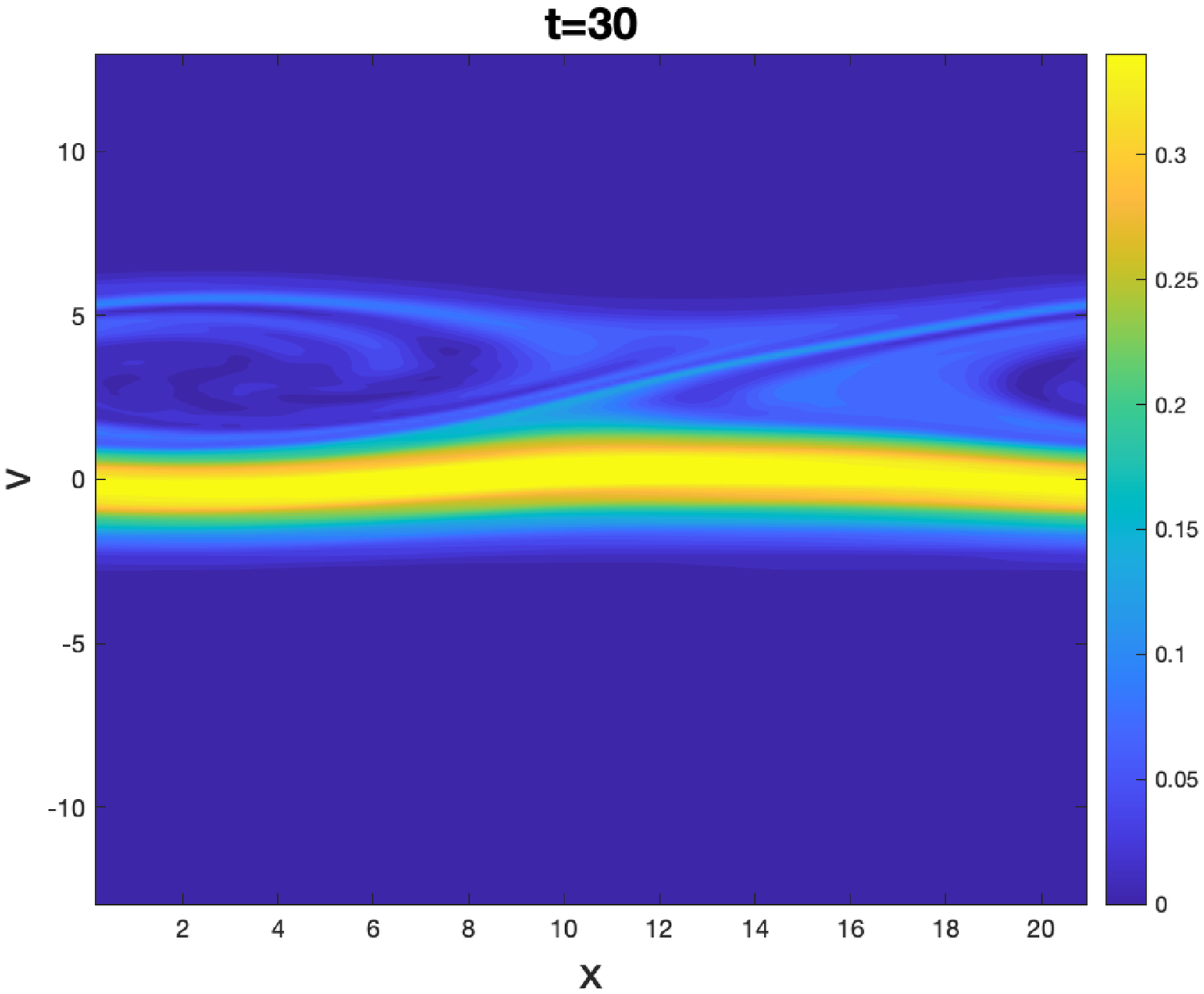}}
			\caption{Example \ref{ex:bumpontail}. Contour plots of the solutions at $t=30$ by three low rank methods including the non-conservative low rank method in \cite{guo2021lowrank} denoted by method I, the conservative method in \cite{guo2022lowrank} denoted by method II and the proposed LoMaC low rank method denoted by method III. $N_x\times N_v = 128\times256$. $\varepsilon=10^{-4}$.}
	\label{fig:bumpontail_contour}
\end{figure}

\end{exa}

\subsection{2D2V Vlasov-Poisson system}

\begin{exa} \label{ex:weak2d} (Weak Landau damping.) We simulate the 2D2V weak Landau damping. The initial  
condition is 
\begin{equation}
	\label{eq:weak}
	f(\bx,\bv,t=0) =\frac{1}{(2 \pi)^{d / 2}} \left(1+\alpha \sum_{m=1}^{d} \cos \left(k x_{m}\right)\right)\exp\left(-\frac{|\bv|^2}{2}\right),
\end{equation}
where $d=2$, $\alpha=0.01$, and $k=0.5$. We set the computation domain as $[0,L_x]^2\times[-L_v,L_v]^2$, where $L_x=\frac{2\pi}{k}$ and $L_v=6$, and the truncation threshold $\varepsilon=10^{-5}$. Note that the solutions are represented in the third order HT format, for which the dimension tree and data  are highlighted in Figure \ref{fig:dimtree1}. In Figure \ref{fig:weak2d_elec_con}, we report the time evolution of the electric energy, hierarchical ranks of the numerical solution,  relative deviation of total mass and energy together with absolute total momentum $J_1$ and $J_2$. It is observed that the proposed method to predict the damping rate of the electric energy as with the 1D1V case, and furthermore, the method is able to conserve the total mass and momentum $J_1$ and $J_2$ as well as the total energy up to the machine precision. We test the CPU time for with mesh refinement study. For a set of meshes $16^2\times32^2$, $32^2\times64^2$, $64^2\times128^2$ the CPU time is 377s, 670s, and 1177s, which are doubled with mesh refinement in each of direction. The CPU is only doubled, compared with $2^5$ times considering the 4D+time problem with mesh refinement in each direction. This implies  storage and CPU savings of several orders in magnitude.

\begin{figure}[h!]
	\centering
	\subfigure[]{\includegraphics[height=40mm]{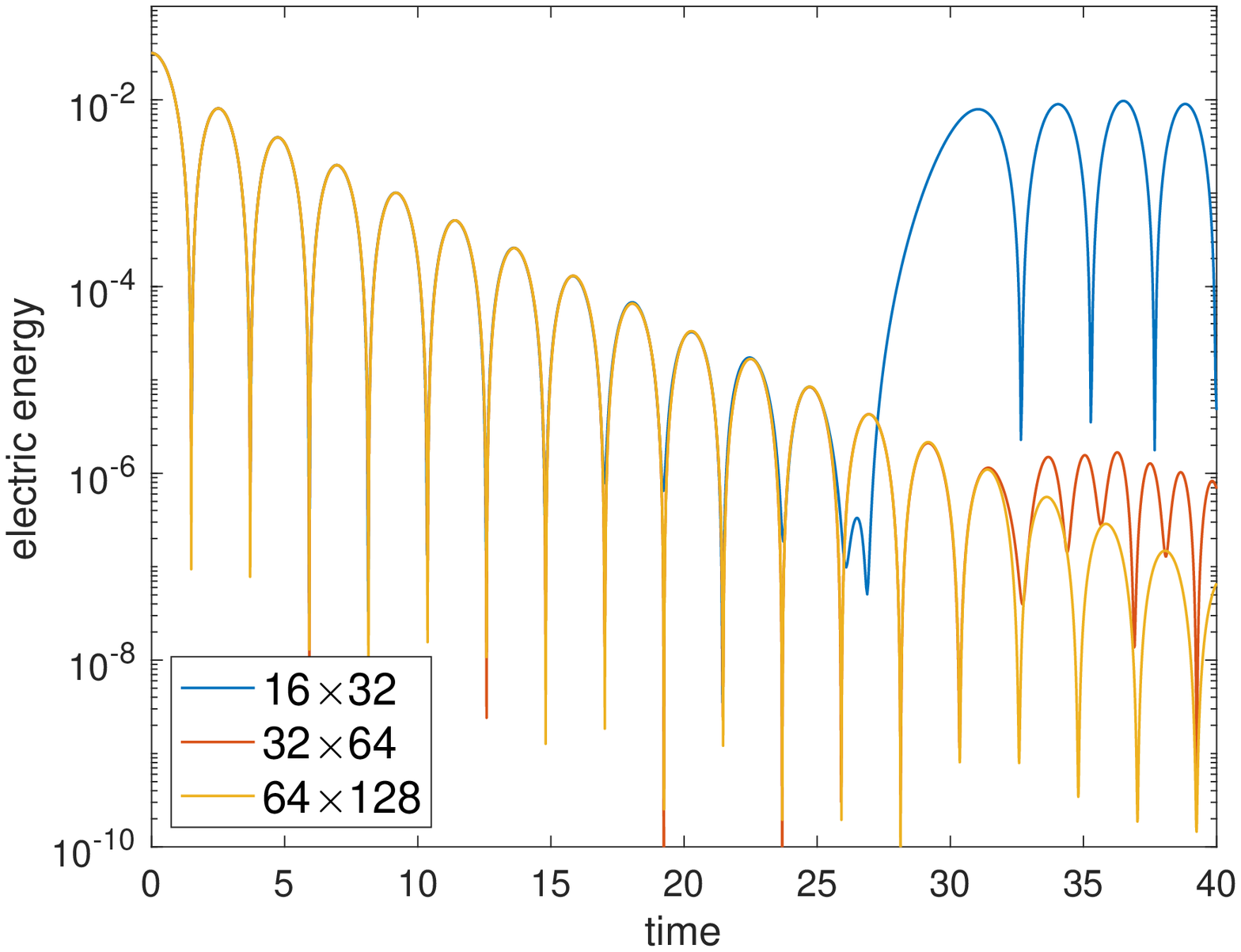}}
		\subfigure[]{\includegraphics[height=40mm]{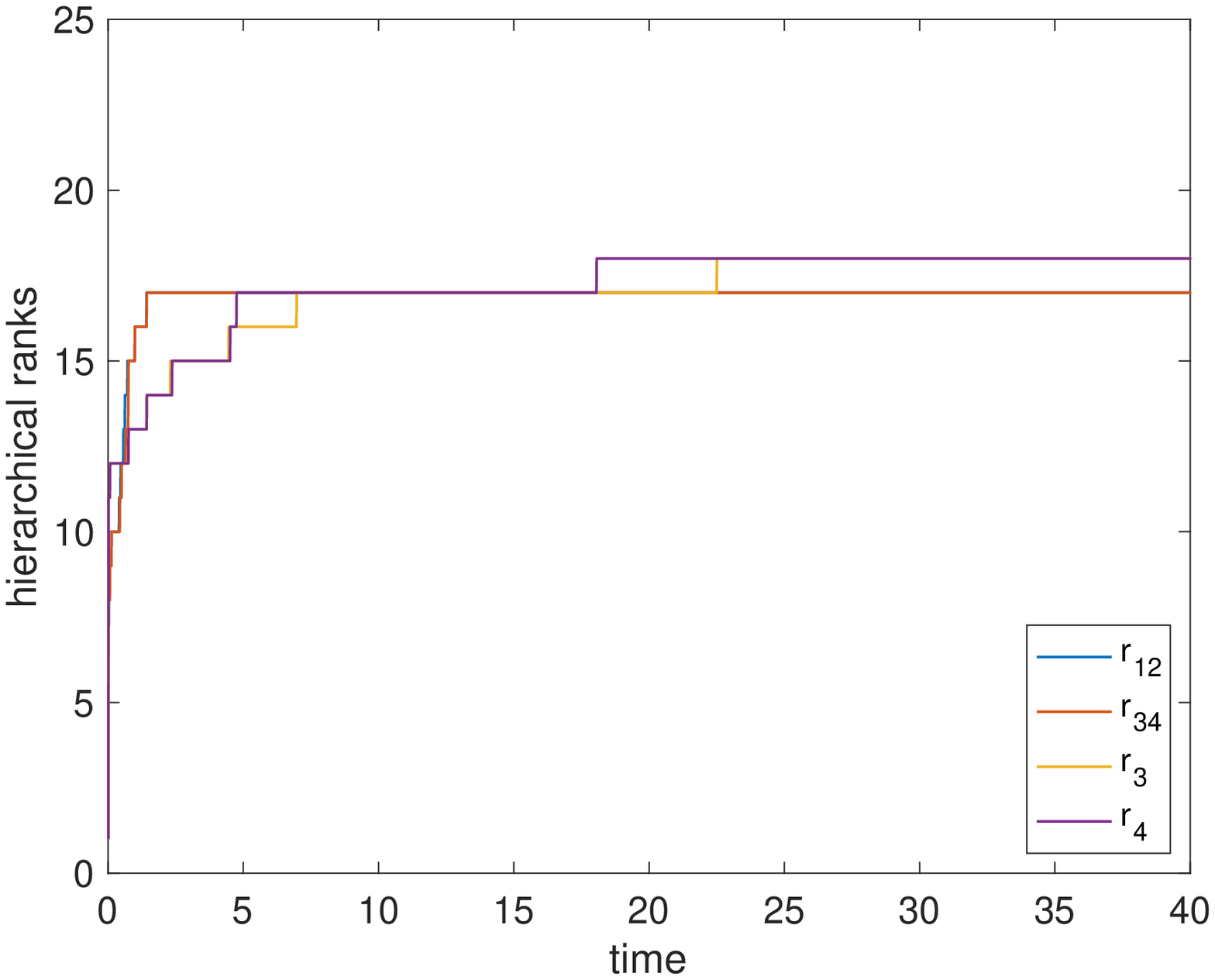}}
		\subfigure[]{\includegraphics[height=40mm]{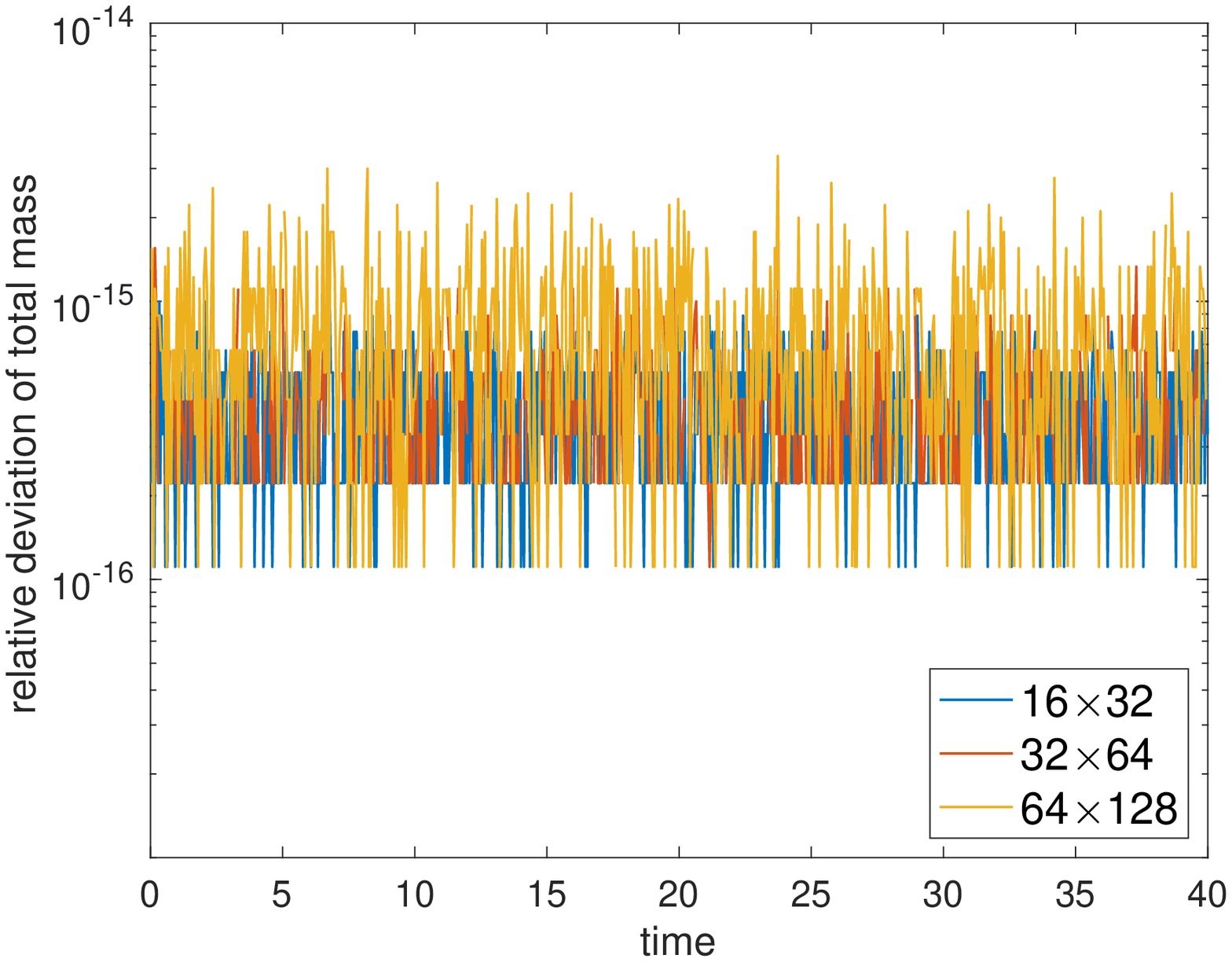}}
		\subfigure[]{\includegraphics[height=40mm]{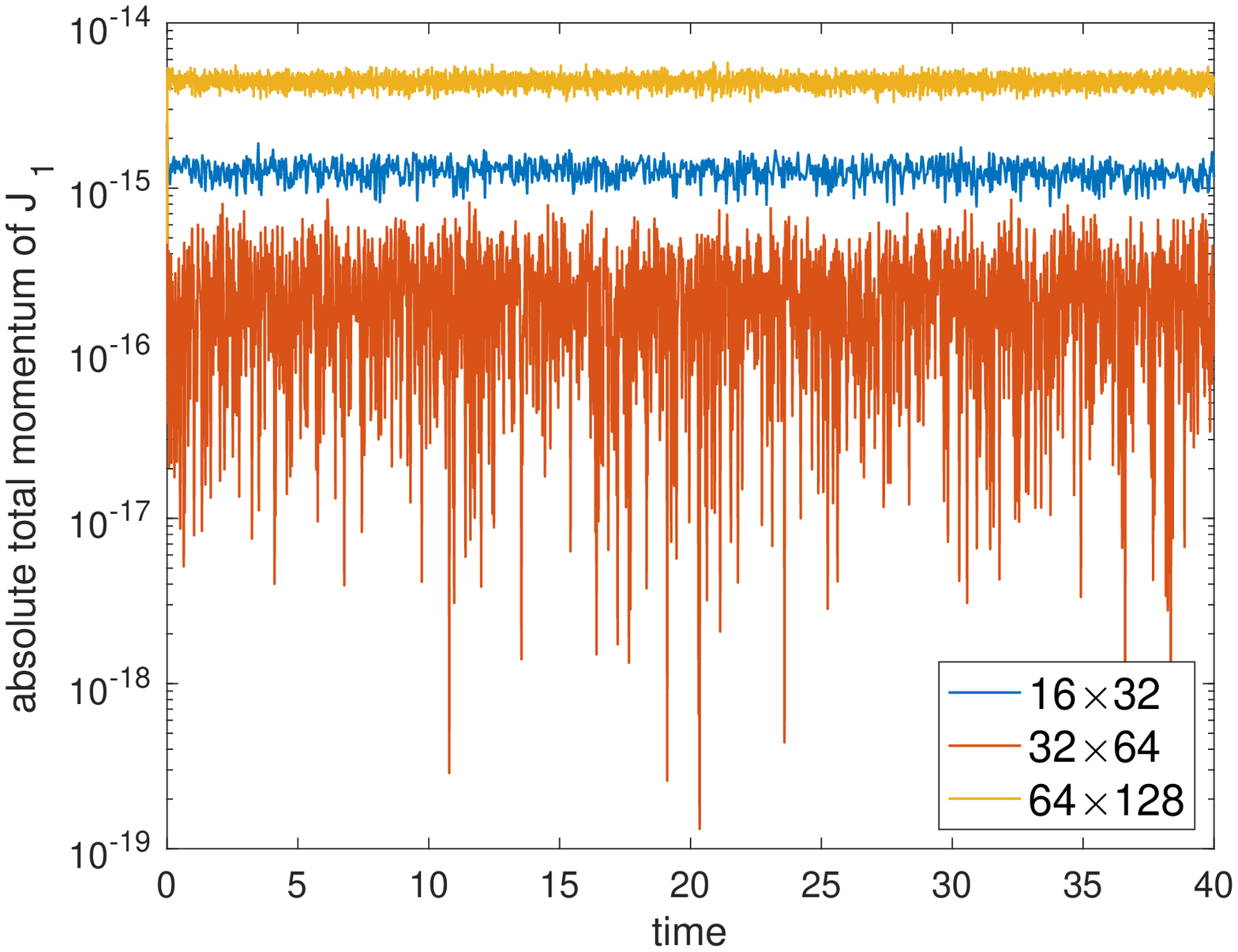}}
			\subfigure[]{\includegraphics[height=40mm]{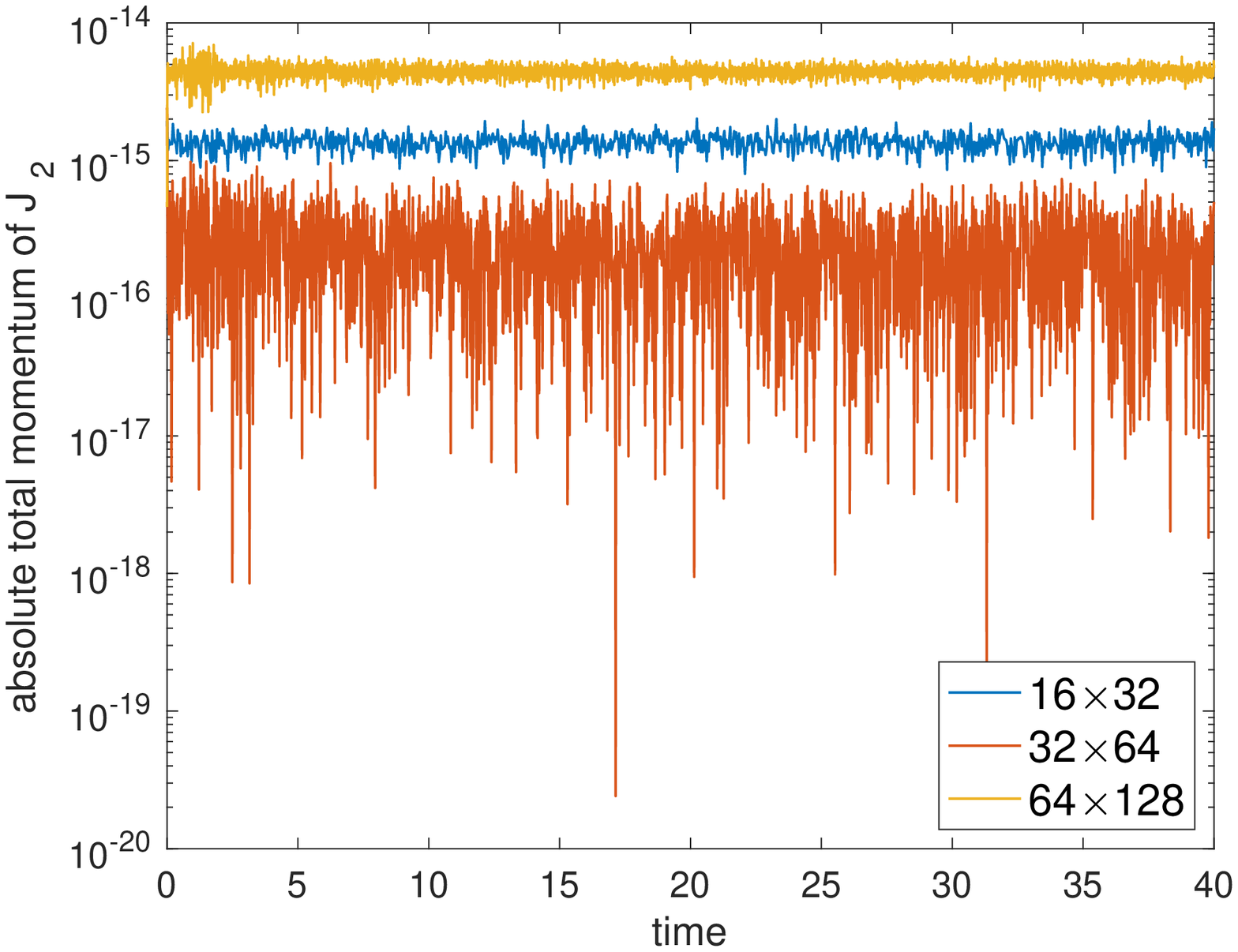}}
		\subfigure[]{\includegraphics[height=40mm]{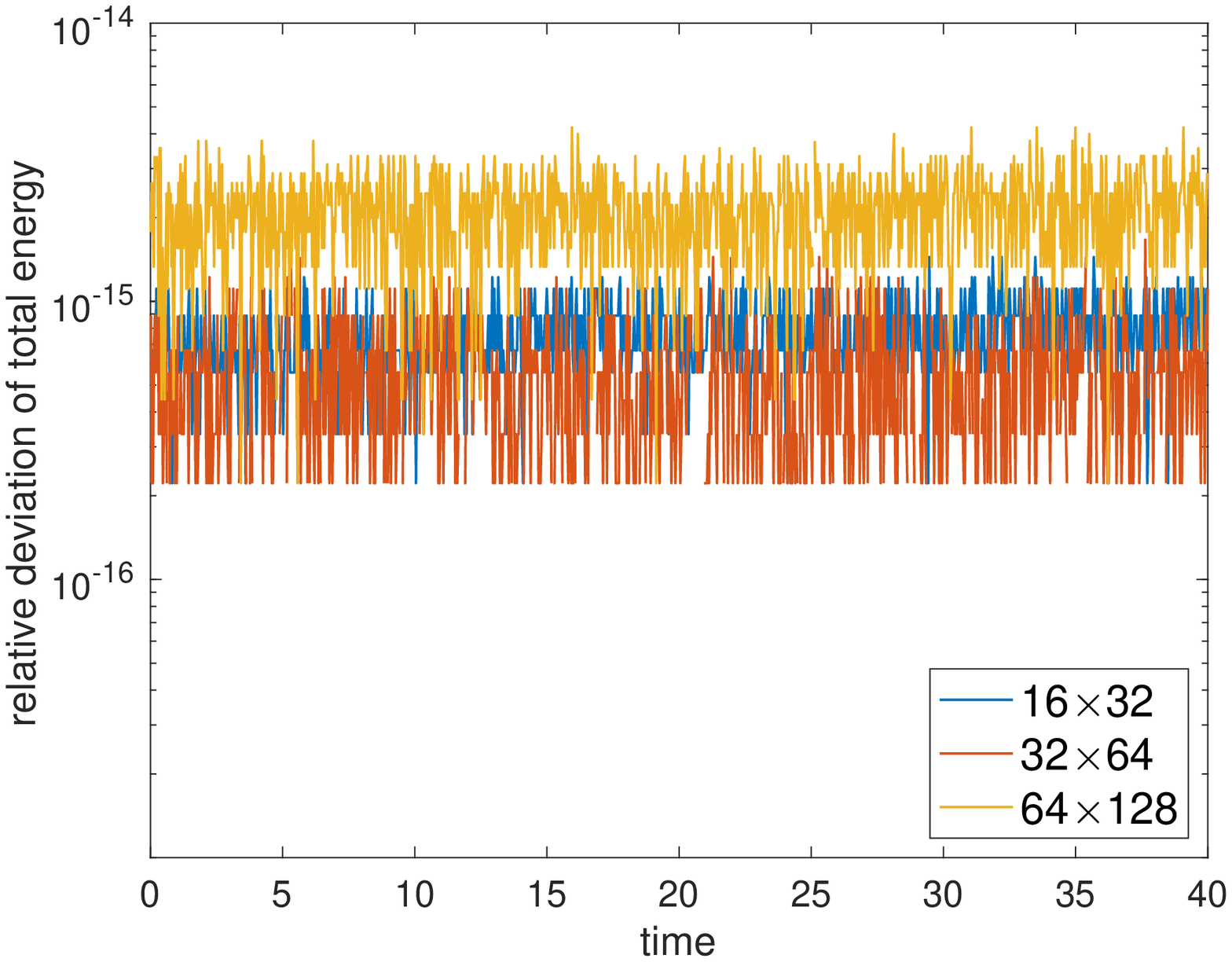}}
	\caption{Example \ref{ex:weak2d}. The time evolution of  electric energy (a), hierarchical ranks of the numerical solution of mesh size $N^2_x\times N^2_v=64^2\times128^2$ (b), relative deviation of total mass (c), absolute total momentum $J_1$ (d), absolute total momentum $J_2$ (e), and relative deviation of total energy (f). $\varepsilon=10^{-5}$. In (b), $r_{12}$ and $r_{34}$ are close. }
	\label{fig:weak2d_elec_con}
\end{figure}

 \end{exa}

 \begin{exa} \label{ex:two2d}(Two-stream instability.) We consider the 2D2V two-stream instability with initial condition
\begin{equation}
	\label{eq:two2d}
	f(\bx,\bv,t=0) =\frac{1}{2^d(2 \pi)^{d / 2}} \left(1+\alpha \sum_{m=1}^{d} \cos \left(k x_{m}\right)\right)\prod_{m=1}^d\left(\exp\left(-\frac{(v_m-v_0)^2}{2}\right) + \exp\left( -\frac{(v_m+v_0)^2}{2}\right)\right),
\end{equation}
where $d=2$, $\alpha=0.001$, $v_0=2.4$, and $k=0.2$.  The computation domain is set as $[0,L_x]^2\times[-L_v,L_v]^2$, where $L_x=\frac{2\pi}{k}$ and $L_v=8$. Let the truncation threshold be $\varepsilon=10^{-5}$.  In Figure \ref{fig:two2d_elec_con},  we report the time evolution of the electric energy, hierarchical ranks of the numerical solution of mesh size  $N_x^2\times N^2_v=128^2\times256^2$,  relative deviation of total mass and energy together with absolute total momentum $J_1$ and $J_2$. The observation is similar to the previous example that the proposed LoMaC low rank method is able to conserve the total mass, momentum, and energy up to the machine precision.

\begin{figure}[h!]
	\centering
	\subfigure[]{\includegraphics[height=40mm]{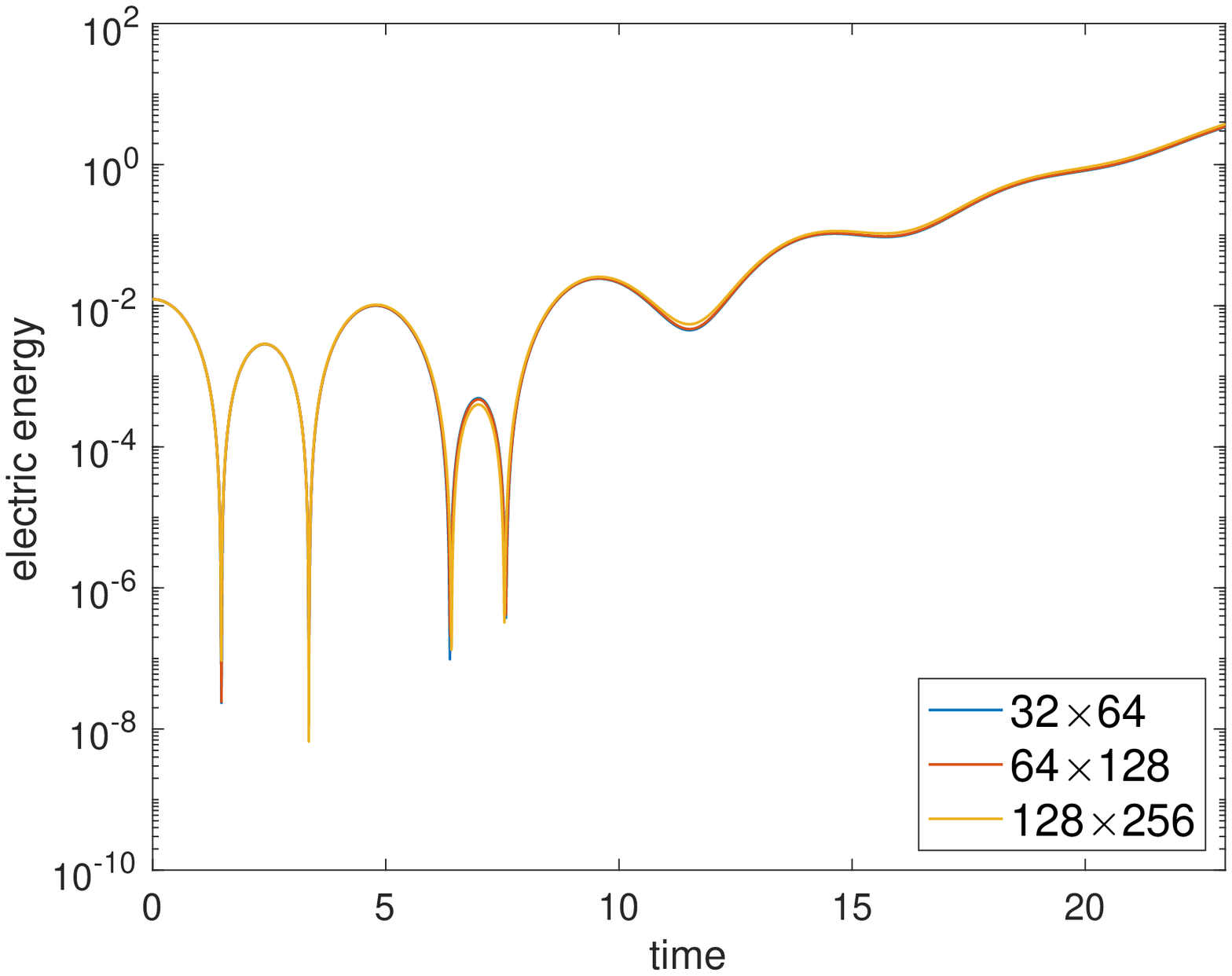}}
		\subfigure[]{\includegraphics[height=40mm]{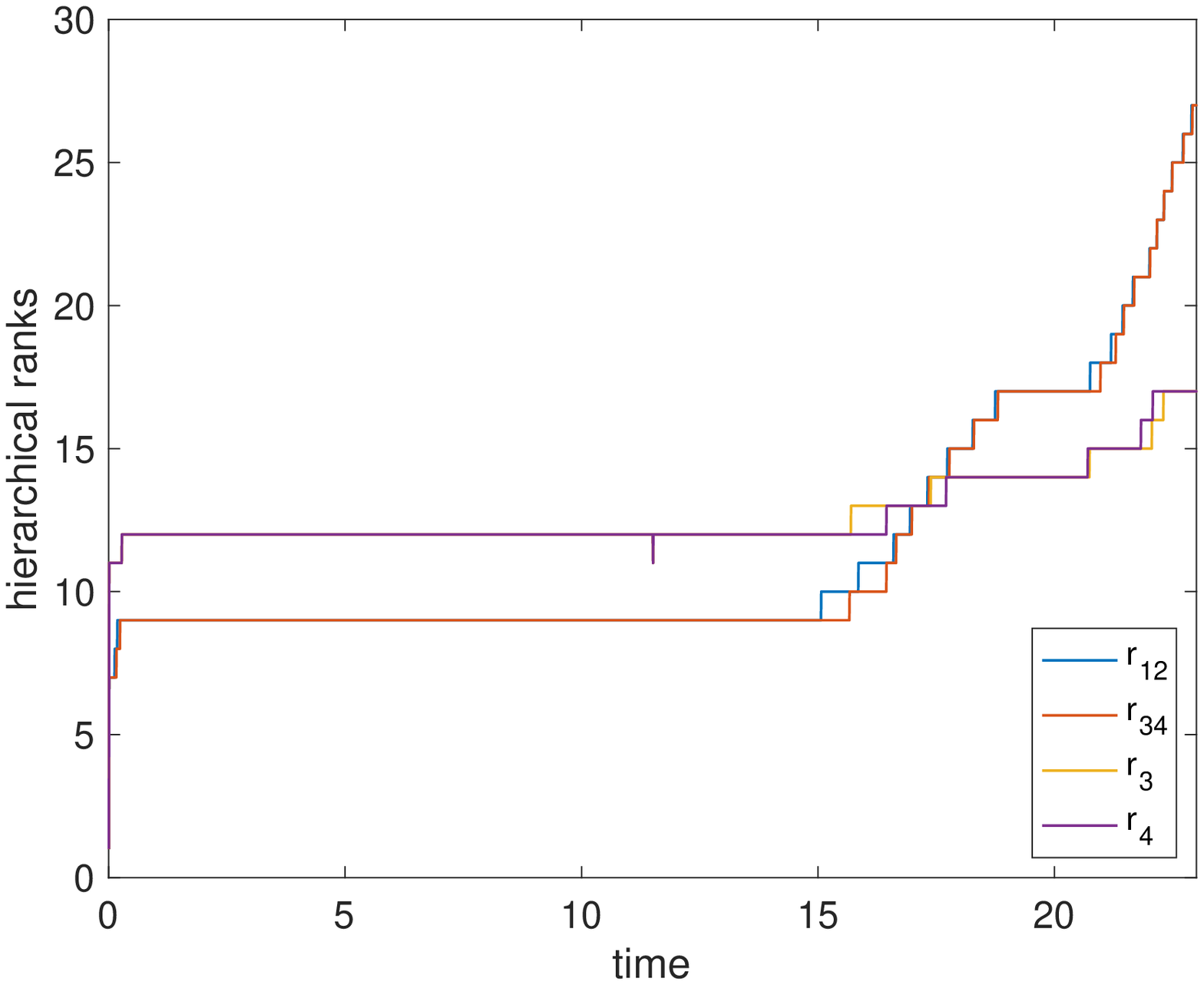}}
		\subfigure[]{\includegraphics[height=40mm]{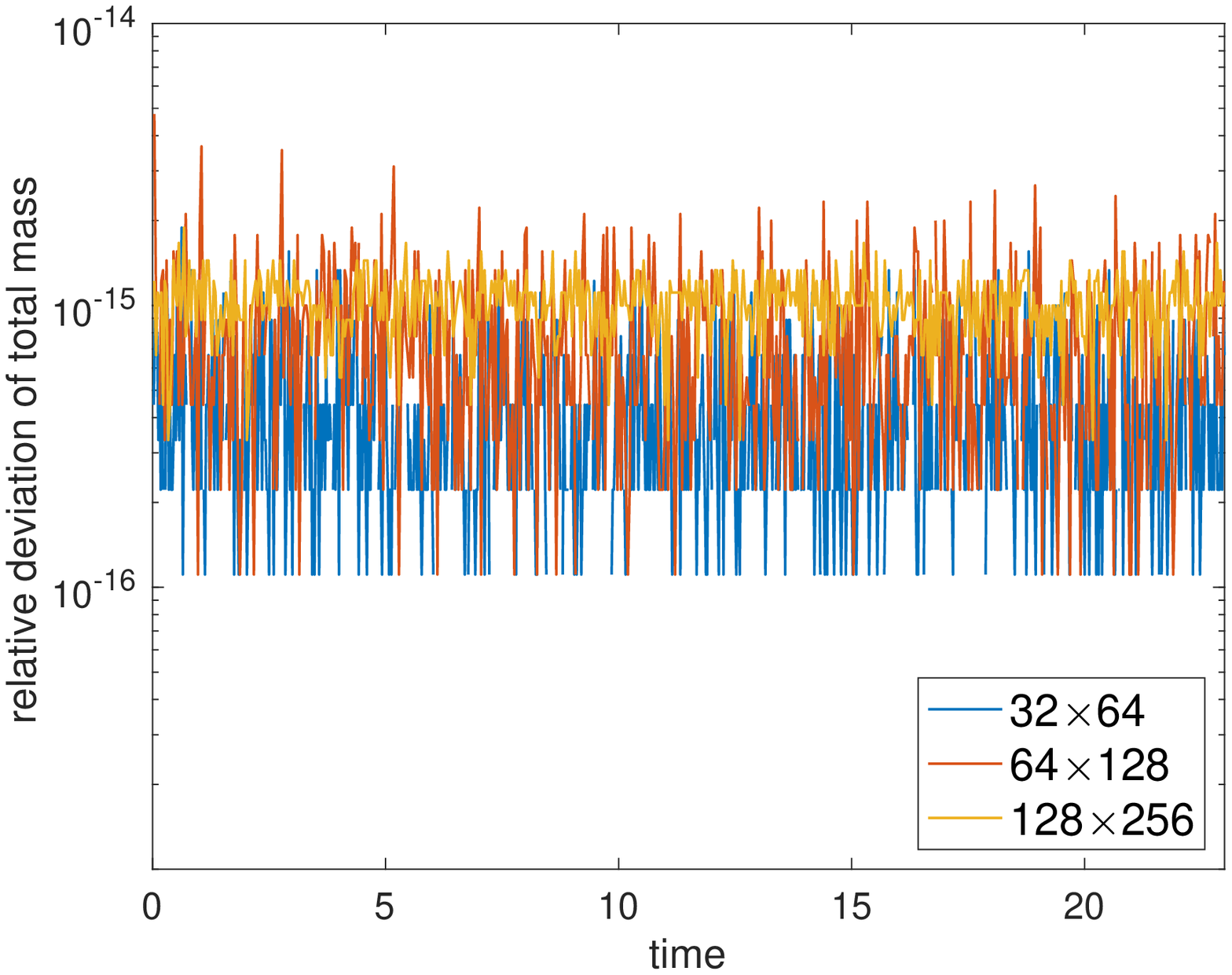}}
		\subfigure[]{\includegraphics[height=40mm]{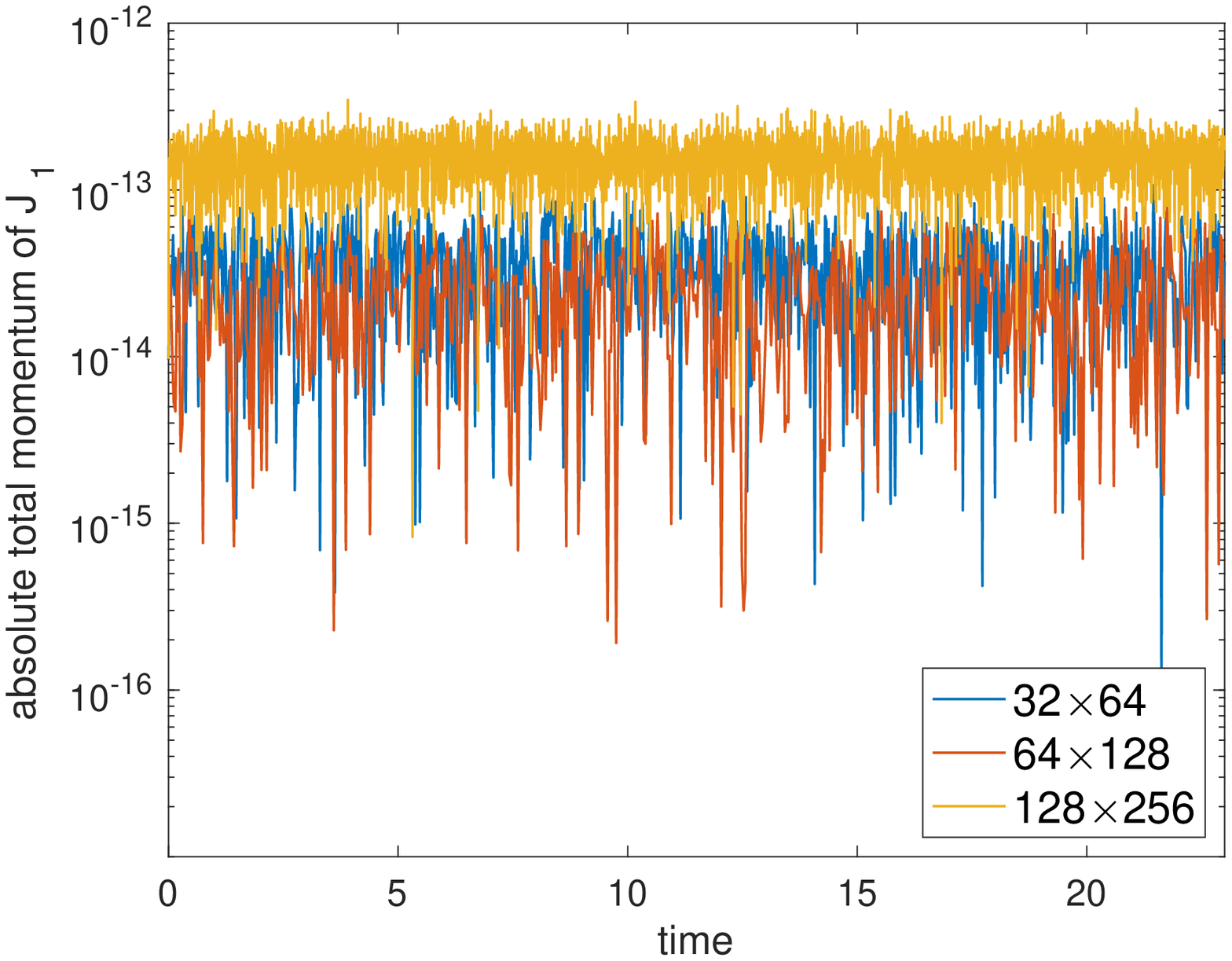}}
			\subfigure[]{\includegraphics[height=40mm]{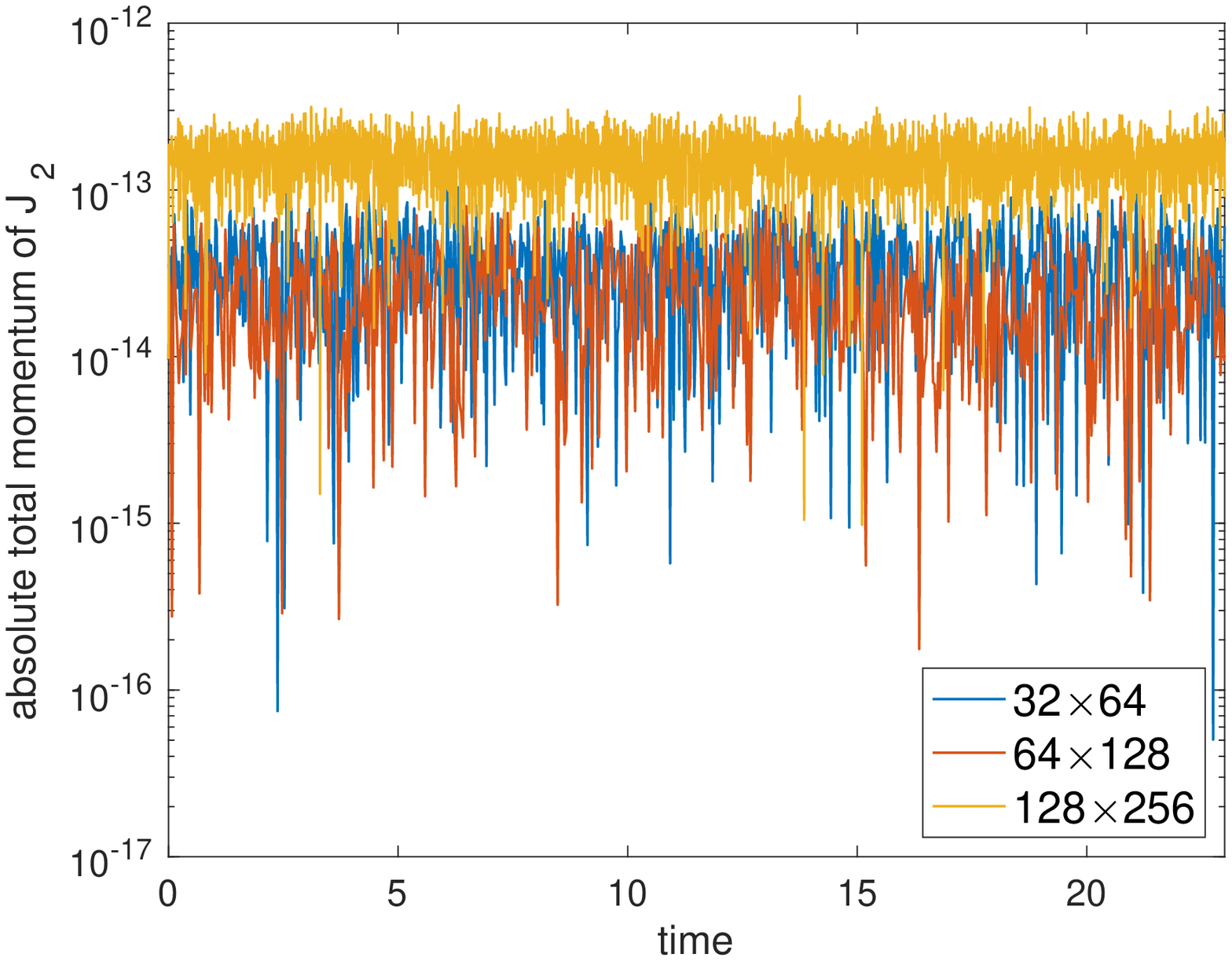}}
		\subfigure[]{\includegraphics[height=40mm]{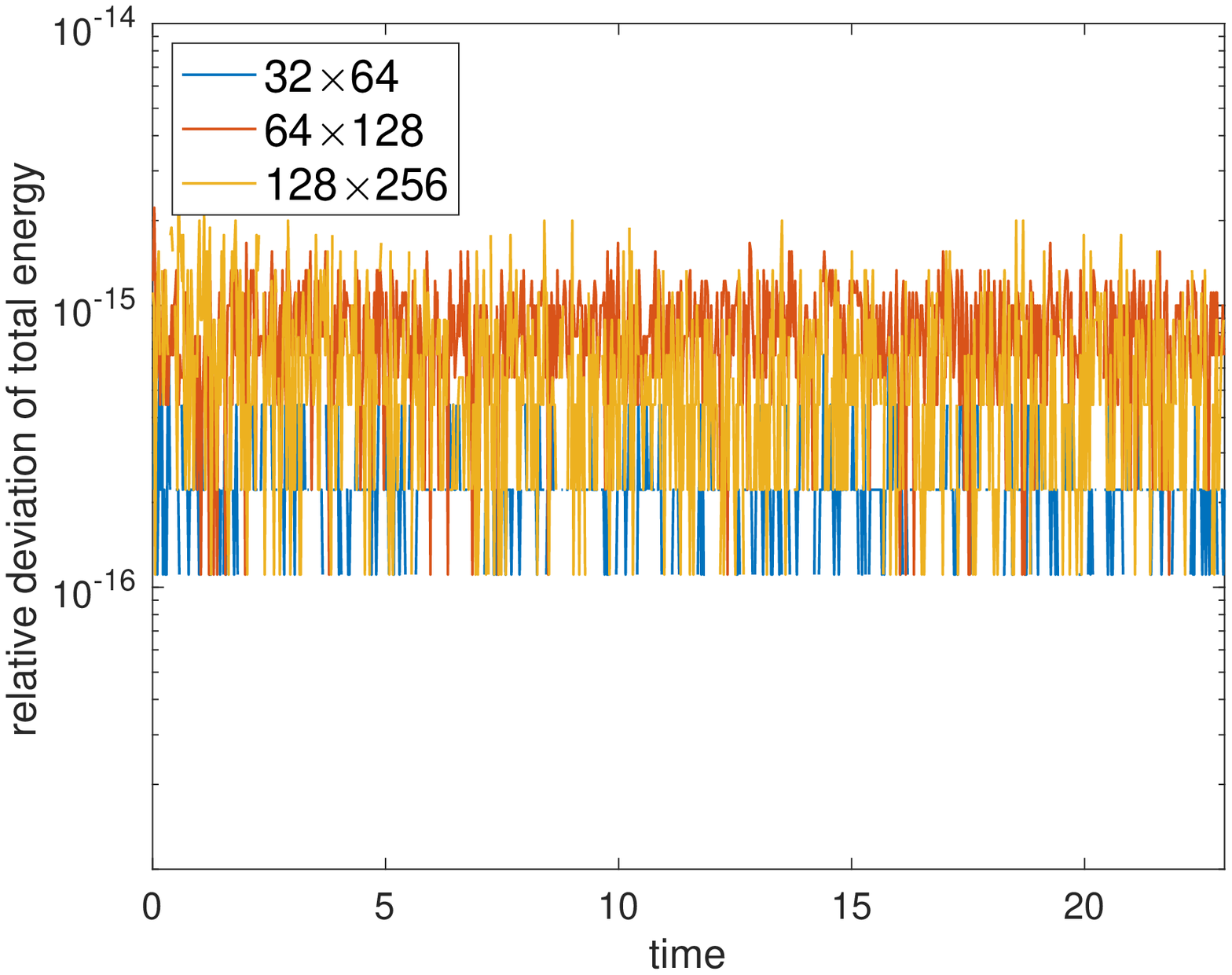}}
	\caption{Example \ref{ex:two2d}.  The time evolution of  electric energy (a), hierarchical ranks of the numerical solution of mesh size $N_x^2\times N^2_v=128^2\times256^2$ (b), relative deviation of total mass (c), absolute total momentum $J_1$ (d), absolute total momentum $J_2$ (e), and relative deviation of total energy (f). $\varepsilon=10^{-5}$. In (b), $r_{12}$ and $r_{34}$ are close, and $r_{3}$ and $r_{4}$.}
	\label{fig:two2d_elec_con}
	
\end{figure}

 \end{exa}

\section{Conclusion}
\setcounter{equation}{0}
\setcounter{figure}{0}
\setcounter{table}{0}

In this paper, we proposed a LoMaC low rank tensor approach for performing deterministic  Vlasov simulations in high dimensions. The newly developed algorithm simultaneously updates the macroscopic invariants in a local conservative fashion using kinetic flux vector splitting, 
alongside the evolution of the kinetic solution in a low rank fashion with adjustments on its macroscopic moments via an orthogonal projection to a subspace determined from updates of macroscopic moments. By construction, the method locally and globally conserves mass, momentum and energy at the fully discrete level. The algorithm is extended to the 2D2V VP system by a hierarchical Tucker structure with full rank (no reduction) in the physical space and low rank reduction for the phase space as well as for the linkage between phase and physical spaces. Further work includes the local marginal and global conservation of macroscopic observables with low rank structure in high dimensional physical spaces.

\bibliographystyle{abbrv}
\bibliography{refer,ref_guo,ref_cheng,ref_cheng_2}

\end{document}